
\documentclass[a4paper, 11pt]{amsart}

\usepackage{latexsym}
\usepackage{amssymb,amsmath}
\usepackage{graphics}
\usepackage{url}
\usepackage[vcentermath, enableskew]{youngtab}

\usepackage{booktabs}  

\newtheorem{theorem}{Theorem}[section]
\newtheorem{corollary}[theorem]{Corollary}

\newtheorem{proposition}[theorem]{Proposition}

\newtheorem{lemma}[theorem]{Lemma}

\theoremstyle{definition}
\newtheorem{definition}[theorem]{Definition}
\newtheorem{remark}[theorem]{Remark}
\newtheorem{example}[theorem]{Example}

\newcommand{\N}{\mathbf{N}}
\newcommand{\Z}{\mathbf{Z}}

\newcommand{\C}{\mathbf{C}}



\renewcommand{\epsilon}{\varepsilon}


\DeclareMathOperator{\sgn}{sgn}

\newcommand{\Ind}{\big\uparrow}
\newcommand{\Res}{\big\downarrow}
\newcommand{\ind}{\!\uparrow}
\newcommand{\res}{\!\downarrow}


\renewcommand{\theta}{\vartheta}

\newcommand{\sh}{\mathrm{sh}}


\newcommand{\mfrac}[2]{{\textstyle\frac{#1}{#2}}}


\newcounter{thmlistcnt}
	{\setcounter{thmlistcnt}{0}%
	\begin{list}{\emph{(\roman{thmlistcnt})}}{%
		\usecounter{thmlistcnt}%
		\setlength{\topsep}{0pt}%
		\setlength{\leftmargin}{0pt}%
		\setlength{\itemsep}{0pt}%
		\setlength{\labelwidth}{17pt}
		\setlength{\itemindent}{30pt}}%
	}%
	{\end{list}}%

\linespread{1.15}
\newcommand{\notunrhd}{\hbox{$\hspace*{1pt}\not{\hspace*{-2.5pt}\unrhd}\hspace*{2.25pt}$}}

\newcommand{\tbox}[2]{\draw (#2,-#1)--(1+#2,-#1)--(1+#2,-#1-1)--(#2,-1-#1)--cycle;}
\newcommand{\tlabel}[3]{\node at (#2+0.5,-#1-0.5) {#3};}
\newcommand{\tput}[3]{\node at (#2,-#1){#3};}


\renewcommand{\bar}{\overline}
\newcommand{\size}[1]{\vert #1 \vert}
\usepackage{tikz,mathdots,mathtools}
\usepackage{enumerate}
\newcommand{\height}{\mathrm{ht}}
\newcommand{\ewidetilde}[2]{\widetilde{\smash{#1}\rule{0pt}{#2}}}

\usepackage{kantlipsum} 

\setlength{\textwidth}{\paperwidth}
\addtolength{\textwidth}{-2.8in}
\calclayout

\numberwithin{equation}{section}

\begin{document}
\title[A proof of the Murnaghan--Nakayama rule]{A proof of the Murnaghan--Nakayama rule using Specht modules and tableau combinatorics}
\date{}
\author{Jasdeep Kochhar and Mark Wildon}
\address{Department of Mathematics, Royal Holloway University of London, United Kingdom}
\email{Jasdeep.Kochhar.2015@rhul.ac.uk}
\email{mark.wildon@rhul.ac.uk}
\subjclass[2010]{Primary 20C30. Secondary 05E10, 05E18.}
\keywords{Murnaghan--Nakayama rule, Specht module, polytabloids, traces}
\begin{abstract}
The Murnaghan--Nakayama rule is a combinatorial rule for the character values of symmetric groups.
We give a new combinatorial proof by explicitly  
finding the trace of the representing matrices in the
standard basis of Specht modules. This gives an essentially bijective proof of the rule.
A key lemma is an extension of a straightening
result proved by the second author to skew-tableaux. Our module theoretic methods also give
short proofs of Pieri's rule and Young's rule.
\end{abstract}
\maketitle
\thispagestyle{empty}

\section{Introduction}
In this article we give a new combinatorial proof of the Murnaghan--Nakayama rule
for the values of the ordinary character $\chi^\lambda$ of $S_n$ canonically labelled by the partition $\lambda$
of $n \in \N$. To state the rule, we require the following definitions.

Let $\ell(\lambda)$ denote the number of parts of $\lambda$.
Given partitions $\mu$ and $\lambda$ of $m$ and $m+n$ respectively,
we say that $\mu$ is a {\it subpartition} of $\lambda$, and write $\mu \subseteq \lambda$, 
if $\ell(\mu) \le \ell(\lambda)$ 
and  $\mu_i \le \lambda_i$ for $1 \le i \le \ell(\mu)$. 
We define the \emph{skew diagram} $[\lambda/\mu]$ to be the set of \emph{boxes}
\[ \{(i,j) : 1\le i \le t \mbox{ and } \mu_i < j \le \lambda_i \}, \]
and call $\lambda/\mu$ a \emph{skew partition}.
We define \emph{row $k$} (resp.~\emph{column $k$}) of $\lambda/\mu$ to be the subset of $[\lambda/\mu]$ of boxes whose first (resp.~second) coordinate equals~$k$. Let
$\height(\lambda/\mu)$ be one less than the number of non-empty rows of $[\lambda/\mu]$.
We define a \emph{border strip}
to be a skew partition whose skew diagram is connected and which
contains no four boxes forming the partition $(2,2)$.

\begin{theorem}[{Murnaghan--Nakayama rule}]\label{thm: MN rule}
Given $m,n \in \N,$ 
let  $\rho \in S_{m+n}$ be an $n$-cycle and let 
$\pi$ be a permutation of the remaining $m$ numbers. 
Then 
\[ \chi^\lambda(\pi\rho) = \sum (-1)^{\height(\lambda/\mu)} \chi^\mu (\pi),\] 
where the sum is over all $\mu \subset \lambda$ such that $\size{\mu} = m$ and $\lambda/\mu$ is a 
border strip.
\end{theorem}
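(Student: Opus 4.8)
The plan is to compute $\chi^\lambda(\pi\rho)$ as the trace of $\pi\rho$ acting on the Specht module $S^\lambda$, read off directly from the diagonal of its matrix in the standard basis $\{e_t\}$ indexed by standard $\lambda$-tableaux. Thus $\chi^\lambda(\pi\rho)=\sum_t\kappa_t$, where $\kappa_t$ is the coefficient of $e_t$ when the polytabloid $e_{t\pi\rho}$ is straightened into the standard basis. Since characters are constant on conjugacy classes I would first take $\rho=(m+1,\,m+2,\,\ldots,\,m+n)$ and $\pi$ a permutation of $\{1,\ldots,m\}$. The first, purely combinatorial, observation is that in a standard $\lambda$-tableau the entries $1,\ldots,m$ always fill a Young subdiagram $[\mu]$ with $\mu\subseteq\lambda$ and $\size{\mu}=m$; so $t$ is encoded by $\mu$, by the standard $\mu$-tableau $t_{\le}$ recording the positions of $1,\ldots,m$, and by a filling $t_{>}$ of $[\lambda/\mu]$ by $m+1,\ldots,m+n$ increasing along rows and down columns.

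Because $\pi$ and $\rho$ have disjoint supports, $e_{t\pi\rho}=(e_{t\rho})\pi$; applying $\rho$ only cyclically permutes $m+1,\ldots,m+n$ among the boxes of $[\lambda/\mu]$, fixing $[\mu]$. This is where the key tool enters: the second author's straightening lemma, extended to skew tableaux, should show that straightening $e_{t\rho}$ yields a $\Z$-combination of polytabloids $e_s$ in which $1,\ldots,m$ still occupy $[\mu]$ and still record $t_{\le}$; applying $\pi$ afterwards permutes only $1,\ldots,m$, and a second use of the lemma (now for the straight shape $\mu$) keeps the skew part of each term unchanged. Hence the contributions split according to $\mu$, and summing over $t_\le$ gives
\[
\chi^\lambda(\pi\rho)=\sum_{\mu\subseteq\lambda,\ \size{\mu}=m} c_{\lambda/\mu}\,\chi^\mu(\pi),
\qquad
c_{\lambda/\mu}:=\sum_{t_{>}}\kappa_{(\mu,\,t_{\le},\,t_{>})},
\]
where the inner sum, which I expect to be independent of $t_{\le}$, is over the standard fillings $t_{>}$ of $[\lambda/\mu]$; equivalently $c_{\lambda/\mu}$ is the trace of an $n$-cycle on the skew Specht module $S^{\lambda/\mu}$.

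It remains to prove $c_{\lambda/\mu}=(-1)^{\height(\lambda/\mu)}$ when $[\lambda/\mu]$ is a border strip, and $c_{\lambda/\mu}=0$ otherwise. I would analyse $e_{t\rho}$ explicitly: the cyclic shift turns the row- and column-increasing filling $t_{>}$ into the filling whose box-reading order is that of $t_{>}$ with its last box moved to the front, and the skew straightening lemma controls how this is rewritten over the standard basis. When $[\lambda/\mu]$ is disconnected, or contains four boxes forming $(2,2)$, I would produce a sign-reversing involution on the fillings $t_{>}$ — pairing the two ways the reading order can be shifted past a branch box — forcing the diagonal coefficients to cancel, so that $c_{\lambda/\mu}=0$. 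When $[\lambda/\mu]$ is a border strip, connectedness together with the absence of a $2\times2$ block should make the straightening telescope; I would induct on $n$ by peeling the box at one end of the strip, showing that the accumulated sign picks up a single factor $-1$ exactly each time the strip loses a row, which gives $(-1)^{\height(\lambda/\mu)}$. The same module-theoretic mechanism, with the cyclic shift replaced by an elementary permutation, likewise yields Pieri's and Young's rules.

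The chief obstacle is this last step: converting the informal "telescoping" into a rigorous accounting of the signs produced by the iterated Garnir relations needed to straighten a cyclically shifted skew tableau, and exhibiting the cancelling involution in the non-border-strip case. A second, logically prior, difficulty is making the skew straightening lemma quantitative — which boxes move and with which signs — precisely enough to license the splitting by $\mu$ used above; it is this quantitative skew version that genuinely extends the second author's earlier straightening result.
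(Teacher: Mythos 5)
Your overall strategy coincides with the paper's: reduce the trace on $S^\lambda$ to a trace on skew Specht modules, then compute the latter explicitly in the standard basis using a skew straightening/dominance lemma. However, there are two genuine gaps.

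First, the step where you split the trace over subpartitions $\mu$ is stated too strongly and is, as stated, false. You claim that straightening $e(t\rho)$ ``yields a $\Z$-combination of polytabloids $e_s$ in which $1,\ldots,m$ still occupy $[\mu]$ and still record $t_{\le}$.'' Garnir relations routinely move entries between $[\mu]$ and $[\lambda/\mu]$: already for $\lambda=(2,2)$, $\mu=(2)$, $m=n=2$, $\rho=(3,4)$, one finds
\[ e\!\left(\,\young(12,43)\,\right) = e\!\left(\,\young(12,34)\,\right) - e\!\left(\,\young(13,24)\,\right), \]
and in the second summand the small entries $1,2$ fill the first column, not $[\mu]$. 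The correct statement is that every new term has a strictly \emph{more dominant} $m$-shape, so the representing matrix is block upper-triangular (not block diagonal) with respect to the ordering of $m$-shapes. This is exactly what the paper's Theorem~\ref{thm: filtration} provides, via Proposition~\ref{prop: colStdToStd} and Lemma~\ref{lemma: GarnirSplit}: the diagonal blocks realise $S^\mu\boxtimes S^{\lambda/\mu}$, so taking traces gives $\chi^\lambda(\pi\rho)=\sum_\mu\chi^\mu(\pi)\chi^{\lambda/\mu}(\rho)$. Without this triangularity the reduction to the skew case is unsupported; with it, your literal claim is unnecessary.

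Second, for the case where $\lambda/\mu$ is not a border strip you propose an unconstructed sign-reversing involution. The paper deliberately avoids this: when $[\lambda/\mu]$ is disconnected it identifies $S^{\lambda/\mu}$ as induced from a proper Young subgroup (so an $n$-cycle acts with zero character), and when $[\lambda/\mu]$ contains a $(2,2)$-block it pairs Pieri's rule and Young's rule with the column orthogonality relation $\sum_\lambda\chi^\lambda(\rho)^2=n$ to show $\langle\chi^{\lambda/\mu},\chi^{(n-\ell,1^\ell)}\rangle=0$ for all $\ell$, hence $\chi^{\lambda/\mu}(\rho)=0$. Producing the involution you gesture at is essentially the content of Mendes--Remmel's proof, so this is a real route, but you would need to construct it; as written this is the largest missing piece. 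Your inductive ``peeling'' for the border-strip case is in the right spirit (the paper inducts on the number of top corner boxes, with the uniqueness of the trace-contributing tableau established via Lemma~\ref{lemma: dominance} and Corollary~\ref{cor: dominance}), but making the sign-bookkeeping rigorous is nontrivial and is exactly where the dominance lemma does its work.
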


Before we continue we provide an example of the Murnaghan--Nakayama rule, showing how it can be applied recursively to calculate single character values. 

\begin{example}\label{ex: MNRule}
Let $\sigma = (1,2)(3,4,5,6,7)(8,9,10,11,12) \in S_{12}$.
We evaluate\\ $\chi^{(4,4,4)}(\sigma)$.
Taking $\rho = (8,9,10,11,12)$, we begin by removing border strips of size 5 
from $(4,4,4)$. As shown in Figure~\ref{fig: border strips} there are two such strips,
namely $(4,4,4)/(4,3)$ and $(4,4,4) / (3,3,1)$, of heights $1$ and $2$, respectively. 
Therefore by the Murnaghan--Nakayama rule
\[\chi^{(4,4,4)}(\sigma) = (-\chi^{(4,3)} + \chi^{(3,3,1)})\bigl((1,2)(3,4,5,6,7) \bigr).\]
Two further applications of the Murnaghan--Nakayama rule to each summand now show
that 
$\chi^{(4,4,4)}(\sigma) = (\chi^{(2)} + \chi^{(2)})\bigl( (1,2) \bigr) = 1 + 1 = 2$.

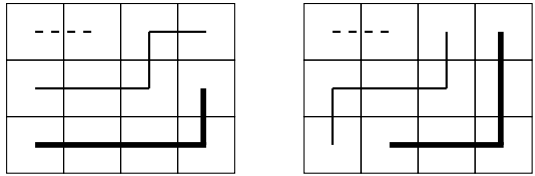
\begin{figure}
\begin{tikzpicture}[scale = 0.75]
\tbox{0}{0} \tbox{0}{1} \tbox{0}{2} \tbox{0}{3}
\tbox{1}{0}\tbox{1}{1} \tbox{1}{2} \tbox{1}{3}
\tbox{2}{0}\tbox{2}{1} \tbox{2}{2} \tbox{2}{3}
\draw[black, line width = .75mm](0.5,-2.5) -- (3.5,-2.5);
\draw[black, line width = .75mm](3.45,-2.5) -- (3.45,-1.5);
\draw[thick](0.5,-1.5) -- (2.5,-1.5);
\draw[thick](2.5,-1.5) -- (2.5,-0.5);
\draw[thick](2.5,-0.5) -- (3.5,-0.5);
\draw[thick, dashed](0.5,-0.5)--(1.5,-0.5);

\end{tikzpicture}
\qquad
\begin{tikzpicture}[scale = 0.75]
\tbox{0}{0} \tbox{0}{1} \tbox{0}{2} \tbox{0}{3}
\tbox{1}{0}\tbox{1}{1} \tbox{1}{2} \tbox{1}{3}
\tbox{2}{0}\tbox{2}{1} \tbox{2}{2} \tbox{2}{3}
\draw[black, line width = .75mm](1.5,-2.5) -- (3.5,-2.5);
\draw[black, line width = .75mm](3.45,-2.5) -- (3.45,-0.5);
\draw[thick](0.5,-1.5) -- (2.5,-1.5);
\draw[thick](2.5,-1.5) -- (2.5,-0.5);
\draw[thick](0.5,-2.5) -- (0.5,-1.5);
\draw[thick, dashed](0.5,-0.5)--(1.5,-0.5);
\end{tikzpicture}
\caption{The border strips of size $5$ (solid) and $2$ (dashed)
removed to compute $\chi^{(4,4,4)}(\sigma)$ in Example \ref{ex: MNRule}. }\label{fig: border strips}
\end{figure}
\end{example}
As Stanley notes in \cite[page 401]{StanleyEnum2}, the Murnaghan--Nakayama rule
was first proved by Littlewood and Richardson in \cite[\S 11]{LR}. Their proof derives
it, essentially as stated in Theorem~\ref{thm: MN rule}, as a corollary of the older Frobenius
formula \cite[page 519, (6)]{Frobenius1900} for the characters of symmetric groups.
(For a modern statement of the Frobenius formula see \cite[(7.77)]{StanleyEnum2} or \cite[(4.10)]{FH}.)
Later Murnaghan 
\cite[page 462, (13)]{Murnaghan1937} gave
a similar but independent derivation of the rule. Murnaghan's paper was cited by Nakayama 
\cite[page 183]{Nakayama1941}, who
gave a more concise proof, still from the Frobenius formula. James gave a different proof in \cite[Ch.~11]{J}
using the relatively deep  Littlewood--Richardson rule.
More recently, elegant involutive proofs have been given by Mendes and Remmel \cite[Theorem~6.3]{MR}
using Pieri's rule and Young's rule and by Loehr 
\cite[\textsection 11]{L} using his labelled abacus representation of antisymmetric functions. 

The starting point for our proof is Corollary~\ref{cor: reduction} of Theorem~\ref{thm: filtration} below, 
which states that
$\chi^\lambda(\pi \rho) = \sum_\mu \chi^\mu(\pi) \chi^{\lambda/\mu}(\rho)$,
where $\chi^{\lambda / \mu}$ is the ordinary character of the skew Specht module $S^{\lambda / \mu}$ 
defined in~\S\ref{sec: skew_back}. By this corollary, it suffices
to show that if $\rho$ is an $n$-cycle then
\begin{equation}\label{eq: skewCase}
\chi^{\lambda / \mu}(\rho) = \begin{cases} (-1)^{\height(\lambda/\mu)}& \text{if 
$\lambda/\mu$ is a border strip of size $n$} \\
0 & \text{otherwise.} \end{cases} \end{equation}
We do this by  explicitly computing the trace of the 
matrix representing the $n$-cycle $\rho$ in the standard basis (see Theorem~\ref{thm: SBT}) 
of $S^{\lambda/\mu}$.
In the critical case where $\lambda/\mu$ is a border strip, we show that there is a unique
basis element giving a non-zero contribution to the trace. This gives a new and essentially
bijective proof of the Murnaghan--Nakayama rule.

Theorem~\ref{thm: filtration} is the main result in~\cite{JP}. 
The proof in \cite{JP} constructs skew Specht modules as ideals in the group algebra of $S_n$
over a field. Our proof using polytabloids instead generalizes James' proof of the modular
branching rule for Specht modules \cite[Ch.~9]{J}. 
In this way we obtain 
a stronger isomorphism for integral modules
that replaces the lexicographic order used in \cite{J} and \cite{JP} with the dominance order.

In \textsection \ref{sec: skew_back} and \textsection \ref{sec: Garnir} we 
define $\lambda/\mu$-polytabloids and
state Theorem \ref{thm: SBT}, which says that the set of standard 
$\lambda/\mu$-polytabloids is a $\Z$-basis of $S^{\lambda/\mu}$. 
In \textsection \ref{sec: filtration} we prove Theorem~\ref{thm: filtration}
and deduce Corollary~\ref{cor: reduction}. 
In  \textsection \ref{sec: Pieri} we use Theorem~\ref{thm: filtration} to 
give short module-theoretic proofs of Pieri's rule and Young's rule.
In \textsection \ref{sec: dominance} we prove Lemma \ref{lemma: dominance}, which
gives a necessary condition for a standard polytabloid to
appear with a non-zero coefficient when a given $\lambda/\mu$-polytabloid 
is written as a linear combination of standard polytabloids. This 
generalises Proposition 4.1 in \cite{W} to skew tableaux.
In \textsection \ref{sec: snakeCase} we use Lemma~\ref{lemma: dominance}
to give a bijective proof of~\eqref{eq: skewCase} when $\lambda/\mu$ is a border strip.
We then deal with the remaining case in \textsection \ref{sec: endgame} by 
a short argument using Pieri's rule and Young's rule.

\section{Background}
\subsection{Skew tableaux and skew Specht modules}\label{sec: skew_back}
Fix $m$, $n \in \N$. Let $\lambda$ be a partition of $m  + n$ and let $\mu$ be a subpartition of
$\lambda$ of size $m$.
We define a \emph{$\lambda/\mu$-tableau} $t$ to be a bijective function $t : [\lambda/\mu] \rightarrow \{1,2,\ldots, n\}$, and call $t$ a \emph{skew tableau} of \emph{shape} $\lambda/\mu$.
We call $(i,j)t$ the \emph{entry of $t$} in position $(i,j)$.
Thus a $\lambda/\mu$-tableau can be visualized as a
filling of the boxes $[\lambda/\mu]$ with distinct entries from $\{1,\ldots, n\}$. 
We draw skew diagrams using the `English convention' in which the largest part appears at the top of the page: thus the \emph{top row}
is row $1$, and so on. 
The \emph{conjugate} partition of $\lambda$ is the partition
$\lambda'$ whose diagram $[\lambda']$ is obtained by reflecting $[\lambda]$ in its
leading diagonal. Equivalently, $\lambda^\prime_j = | i : \lambda_i \ge j|$.

There is a natural action of $S_n$ on the set of $\lambda/\mu$-tableaux defined
by $(i,j)(t \sigma) = \bigl( (i,j)t \bigr)\sigma$ for $\sigma \in S_n$.
Given a $\lambda/\mu$-tableau $t,$ let $R(t)$ (resp.~$C(t)$) be the subgroup of $S_n$ consisting of all permutations that setwise fix the entries in each row (resp.~column) of $t$. 
We define an equivalence relation $\backsim$ on the set of $\lambda/\mu$-tableaux by $t \backsim u$ if and only if there exists $\pi \in R(t)$ such that $u =  t\pi.$
The {\it $\lambda/\mu$-tabloid} $\{t\}$ is the equivalence class of $t$. A short calculation
shows that $S_n$ acts
on the set of $\lambda/\mu$-tabloids by $\{t\} \sigma
= \{t \sigma\}$. 
 
Generalizing the usual definitions to skew partitions, 
we say that a $\lambda/\mu$-tableau is \emph{row standard} if the 
entries in its rows are increasing when read from left to right, and \emph{column standard}
if the entries in its columns are increasing when read from top to bottom.
A tableau $t$ that is both row standard and column standard is a {\it standard} tableau.
 
Let $M^{\lambda/\mu}$ be the $\Z S_n$-permutation module
spanned by the $\lambda/\mu$-tabloids.
We define the \emph{$\lambda/\mu$-polytabloid}  $e(t) \in M^{\lambda/\mu}$ by
\vspace*{-2.5pt}
\[ e(t) = \sum_{\sigma\in C(t)} \text{sgn}(\sigma)\{t\}\sigma. \]
If $t$ is a standard tableau then we say that $e(t)$ is a \emph{standard polytabloid}.
The \emph{skew Specht module} $S^{\lambda/\mu}$ is then the $\Z S_n$-module 
spanned by all $\lambda/\mu$-polytabloids. Taking $\mu = \varnothing$ this is
the Specht module $S^\lambda$, defined over~$\Z$. By definition,
$\chi^\lambda$ is the character of $S^\lambda \otimes_\Z \C$, and more
generally, $\chi^{\lambda / \mu}$ is the character of $S^{\lambda / \mu} \otimes_\Z \C$.
%

\subsection{Garnir relations and the Standard Basis Theorem}\label{sec: Garnir}
If $\sigma \in S_n$ then 
an easy calculation shows that 
\vspace*{-5pt}
\begin{equation}
\label{eq:cyclic} 
e(t)\sigma = e(t\sigma).
\end{equation}
 Hence $S^{\lambda/\mu}$ is cyclic,
generated by any $\lambda/\mu$-polytabloid. Moreover given $\tau \in C(t)$ then
\vspace*{-5pt}
\begin{equation}\label{eq: column} e(t)\tau = \sgn(\tau)e(t) \end{equation}
so $S^{\lambda/\mu}$ is spanned by the $\lambda/\mu$-polytabloids $e(t)$ for $t$ a column standard $\lambda/\mu$-tableau. 
Let $\widetilde{t}$ be the unique
column standard $\lambda/\mu$-tableau whose columns agree setwise with $t$
and let $\epsilon_t \in \{+1,-1\}$ be defined by 
$e(\,\widetilde{t}\,) = \epsilon_t\hskip0.5pt e(t)$. We call $\widetilde{t}$ the
\emph{column straightening} of $t$.

Suppose that $(i,j)$ and $(i,j+1)$ are boxes in $[\lambda/\mu].$
Given a $\lambda/\mu$-tableau~$t$, let 
\[ X = \{(i,j)t, (i+1,j)t, \ldots \} \]
be the set of entries in column $j$ of $t$ 
weakly below box $(i,j)$, and let~
\[ Y = \{\ldots, (i-1,j+1)t, (i,j+1)t\} \]
be the set of entries in column $j+1$ of $t$ weakly above box 
$(i,j+1)$. Let $C_{X,Y}$ be the set of all products of transpositions $(x_1,y_1) \ldots (x_k, y_k)$
for $x_1 < \ldots < x_k$ and $y_1 < \ldots < y_k$ where $\{x_1,\ldots,x_k\} \subseteq X$ and
$\{y_1,\ldots, y_k\} \subseteq Y$ are non-empty $k$-sets. 
We define the \emph{Garnir element for $X$ and $Y$} by
\begin{equation}
\label{eq: GarnirElement} 
G_{X,Y} = 1 + \sum_{\sigma \in C_{X,Y}} \sgn(\sigma)\sigma \in \Z S_{X \cup Y}.
\end{equation}
Restated, replacing ideals in the group ring $\Z S_n$ with polytabloids, (3.8) in~\cite{FP}
implies that
\vspace*{-5pt}
\begin{equation}\label{eq: Garnir} e(t) G_{X,Y} = 0.\end{equation}
\vspace*{-2.5pt}
Similarly restated, Theorem 3.9 in \cite{FP} is as follows.

\begin{theorem}[Standard Basis Theorem]{\ }\label{thm: SBT}
\begin{itemize}
\item[(i)] 
Any $\lambda/\mu$-polytabloid can be expressed as a $\Z$-linear combination of standard $\lambda/\mu$-polytabloids
by applications of column relations~\eqref{eq: column} and Garnir relations~\eqref{eq: Garnir}.
\item[(ii)] 
The $\Z S_n$-module $S^{\lambda/\mu}$ has  the set of standard $\lambda/\mu$-polytabloids
as a $\Z$-basis.
\end{itemize}
\end{theorem}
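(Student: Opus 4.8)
The plan is to mimic James' classical argument for ordinary Specht modules, adapted to the skew setting, and to strengthen it by replacing the lexicographic order with the dominance order on column-standard tableaux. First I would fix a total order on column-standard $\lambda/\mu$-tableaux that refines a suitable partial order: to each column-standard tableau $t$ associate the composition recording, row by row (read top to bottom, left to right), the multiset of entries appearing in each row, and order tableaux by dominance of these "content vectors", breaking ties arbitrarily to get a total order. The key point is that this order is well-founded, so a descent argument will terminate.

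For part~(i), given an arbitrary $\lambda/\mu$-polytabloid $e(t)$, I would first apply the column relations~\eqref{eq: column} to pass to $e(\widetilde{t}\,) = \epsilon_t e(t)$, so it suffices to straighten $e(u)$ for $u$ column standard. If $u$ is already standard we are done; otherwise there is a row of $u$ with a descent, i.e.\ adjacent boxes $(i,j)$ and $(i,j+1)$ in $[\lambda/\mu]$ with $(i,j)u > (i,j+1)u$. Take $X$ the set of entries of column $j$ weakly below $(i,j)$ and $Y$ the set of entries of column $j+1$ weakly above $(i,j+1)$; by the descent condition and column-standardness, $\max X > \min Y$, and I claim $|X| + |Y| > $ the column length, so the Garnir relation~\eqref{eq: Garnir} is nontrivial. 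Expanding $e(u)G_{X,Y} = 0$ and moving $e(u)$ to one side writes $e(u)$ as a $\Z$-linear combination of $e(u\sigma)$ for $1 \neq \sigma \in C_{X,Y}$; after column-straightening each $u\sigma$, every term $e(\widetilde{u\sigma}\,)$ is a signed column-standard polytabloid strictly larger than $e(u)$ in the chosen order, because $\sigma$ moves entries from the lower column $j$ into the higher column $j+1$ and hence up into earlier rows, strictly increasing the content vector in the dominance order. Induction on the (finitely many) column-standard tableaux above $u$ then completes the straightening.

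For part~(ii), spanning is immediate from~(i) together with the fact that $S^{\lambda/\mu}$ is spanned by all $\lambda/\mu$-polytabloids. For $\Z$-linear independence I would use the standard "last tabloid" argument: for a standard $\lambda/\mu$-tableau $t$, order $\lambda/\mu$-tabloids by the dominance order on their row-content vectors, and observe that $e(t) = \{t\} \pm (\text{strictly smaller tabloids})$, and moreover that $\{t\}$ is the unique dominance-maximal tabloid occurring in $e(t)$, occurring with coefficient $\pm 1$. Since distinct standard tableaux $t$ have distinct tabloids $\{t\}$ (a standard tableau is recovered from its tabloid by sorting each row), the leading tabloids of the standard polytabloids are distinct, and a minimal nontrivial $\Z$-dependence would force the coefficient of its dominance-maximal leading tabloid to vanish, a contradiction. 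Hence the standard $\lambda/\mu$-polytabloids are $\Z$-independent, and combined with spanning they form a $\Z$-basis.

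The main obstacle is verifying that the Garnir straightening step genuinely increases the tableau in the dominance order on content vectors, rather than merely in the coarser lexicographic order used in~\cite{J,JP}; this requires checking that each nontrivial $\sigma \in C_{X,Y}$ swaps a set of entries out of column $j$ for a strictly "higher" set out of column $j+1$ in a way compatible with dominance, and that the subsequent column straightening cannot undo this gain. A secondary technical point is confirming the Garnir relation used is nontrivial, i.e.\ that the chosen $X$, $Y$ satisfy $|X|+|Y|$ exceeding the relevant column length so that $C_{X,Y} \neq \emptyset$ and $G_{X,Y} \neq 1$; this follows from $(i,j+1) \in [\lambda/\mu]$ forcing $\mu_i < j+1$, hence $\mu_{i+1} \le \mu_i \le j$, so box $(i+1,j)$, if it lies in $[\lambda]$, lies in $[\lambda/\mu]$ as well, giving the needed overlap.
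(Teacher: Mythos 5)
The paper does not actually give its own proof of Theorem~\ref{thm: SBT}: it cites \cite{FP} and remarks that James' arguments for ordinary Specht modules generalize. Your proposal is a sensible attempt to reconstruct the classical argument, and part~(ii) (linear independence via leading tabloids in the tabloid dominance order) is correct and matches the machinery the paper later develops in \S\ref{sec: dominance} (Lemma~\ref{lemma: order2} and the proof of Lemma~\ref{lemma: dominance}). Part~(i), however, has a genuine gap, and it is exactly at the point you flagged as the main obstacle.

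The order you propose for the straightening induction --- dominance of the row--content vectors of column-standard tableaux, with arbitrary tie-breaking --- does not strictly increase under a Garnir relation followed by column straightening. The reason is that the boxes $(i,j)$ and $(i,j+1)$ both lie in row $i$, so a transposition $\sigma \in C_{X,Y}$ may well swap the two entries in those very boxes, moving entries within row $i$ rather than to an earlier row. Concretely, take $\lambda/\mu = (2,2)$ and $u = \young(21,34)$, with descent at $(1,1)$; here $X = \{2,3\}$, $Y=\{1\}$, and the Garnir relation gives $e(u) = e\bigl(\,\young(12,34)\,\bigr) - e\bigl(\,\young(13,24)\,\bigr)$. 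The first tableau on the right has exactly the same row contents as $u$, so it is tied with $u$ in your partial order, and "breaking ties arbitrarily" cannot guarantee that this term is strictly larger. Your justification that "$\sigma$ moves entries $\ldots$ up into earlier rows" is false in general; the forced direction of travel is from column $j$ to column $j+1$, not from a lower row to a higher one.

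The fix is to order column-standard tableaux by column position rather than by row content. This is precisely the total order of Definition~\ref{defn: totalOrder} in the paper (James' order): $u' > u$ iff the greatest entry in a different column in $u'$ than in $u$ is further right in $u'$. Under this order, each $\sigma \in C_{X,Y}$ is a product of transpositions $(x,y)$ with $x > y$ and $x$ strictly to the left of $y$, so $\widetilde{u\sigma} > u$, and the induction terminates; this is how Proposition~\ref{prop: colStdToStd} in the paper argues (there for the full $\lambda$-shape, but the computation is the same for skew shapes). A column-content dominance order (record for each $y$ the number of entries $\le y$ in each column) would also work. One further small point: your "secondary technical concern" about $C_{X,Y} \ne \emptyset$ is a non-issue --- $X$ contains $(i,j)t$ and $Y$ contains $(i,j+1)t$, so both are non-empty and $C_{X,Y}$ automatically contains at least the single transposition $\bigl((i,j)t,(i,j+1)t\bigr)$; the substantive cited input from \cite{FP} is the vanishing $e(t)G_{X,Y}=0$ itself.
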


We remark that the proofs of Theorem 7.2 and 8.4 in \cite{J}, for Specht modules labelled by partitions,
but defined using polytabloids, generalize easily to prove~\eqref{eq: Garnir} and Theorem~\ref{thm: SBT}
exactly as stated above.
We give a small example of Garnir relations in Example~\ref{ex: Garnir} below.

\subsection{A filtration for Specht modules}\label{sec: filtration}
We  require the following notation. 
Given finite groups $G$ and $H,$ a $\Z G$-module $U$ and a $\Z H$-module $V$, we
denote by $U \boxtimes V$  the $\Z [G\times H]$-module given by the outer tensor product (see \cite[(43.1)]{CR}) of $U$ and $V$. 
The induction and restriction of modules and 
characters, defined as in \cite[\S 12D, \S 43]{CR}, 
are denoted by $\ind$ and $\res$, respectively. 

Fix throughout this section $m$, $n \in \N$ and a partition $\lambda$ of $m+n$.
Let $S_{(m,n)}\hspace{-2pt} =  S_{\{1,2,\ldots,m\}} \times S_{\{m+1,m+2,\ldots,m+n\}}$.
We shall prove the following theorem. 

\begin{theorem}[{\cite[Theorem 3.1]{JP}}]\label{thm: filtration}
The restricted Specht module $S^{\lambda}\hskip-0.5pt\res_{S_{(m,n)}}$ has a descending chain of $\Z S_{(m,n)}$-submodules whose successive quotients are isomorphic~to
$S^{\mu} \boxtimes S^{\lambda/\mu}$, 
where each subpartition~$\mu$ of $\lambda$ of size $m$ occurs exactly once.
\end{theorem}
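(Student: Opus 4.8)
The plan is to exhibit the filtration explicitly by means of subspaces of $S^\lambda$ spanned by polytabloids indexed by tableaux whose behaviour on the entries $\{1,\dots,m\}$ is prescribed. Concretely, for a $\lambda$-tableau $t$, record which boxes of $[\lambda]$ contain the small entries $\{1,\dots,m\}$; call this the \emph{shape of the small part}. If we insist the small entries increase along rows, the box-set they occupy is a subdiagram, and its shape is a partition $\nu \subseteq \lambda$ with $|\nu| = m$ exactly when $[\nu]$ is a Young subdiagram, but in general it is only a ``column-convex'' skew-free set; the relevant invariant, as in James' branching-rule argument, is the partition obtained by sorting the column-lengths occupied by small entries. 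So first I would, for each $\mu \subseteq \lambda$ with $|\mu| = m$, define $M_{\unrhd \mu}$ to be the $\Z$-span of all polytabloids $e(t)$ such that the dominance class of the small-part shape of $t$ is $\unrhd \mu$, and $M_{\rhd\mu}$ similarly with strict dominance. One checks these are $\Z S_{(m,n)}$-submodules of $S^\lambda\res_{S_{(m,n)}}$: the group $S_{(m,n)}$ cannot decrease the small-part shape in the dominance order, and Garnir/column straightening relations, when applied within a polytabloid, only move small entries to dominance-larger shapes (this is where I invoke Theorem~\ref{thm: SBT}(i) and the skew straightening estimate, Lemma~\ref{lemma: dominance}, to control which standard polytabloids appear). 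Ordering the finitely many $\mu$ compatibly with dominance gives the descending chain.

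The second and main step is to identify the successive quotient $M_{\unrhd\mu}/M_{\rhd\mu}$ with $S^\mu \boxtimes S^{\lambda/\mu}$. I would define a map on generators: a polytabloid $e(t)$ with small-part shape exactly $\mu$ decomposes $t$ into a $\mu$-tableau $t^-$ (the small entries, in their boxes, which form $[\mu]$) and a $\lambda/\mu$-tableau $t^+$ (the large entries, relabelled $1,\dots,n$ by subtracting $m$ and placed in $[\lambda/\mu]$), and I send $e(t) \mapsto e(t^-) \otimes e(t^+)$. One must check this is well defined modulo $M_{\rhd\mu}$ — using \eqref{eq: column}, \eqref{eq:cyclic} and \eqref{eq: Garnir} on both sides, and the fact that any straightening step that leaves the small-part shape equal to $\mu$ factors as an independent column/Garnir move on the small boxes and on the skew boxes — and that it is $\Z[S_{\{1,\dots,m\}} \times S_{\{m+1,\dots,m+n\}}]$-equivariant, which is immediate from the definition of the action on tableaux. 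Surjectivity is clear since standard generators are hit; injectivity follows from Theorem~\ref{thm: SBT}(ii) applied on all three modules: the standard $\lambda/\mu$-polytabloids $e(t)$ with small-part shape exactly $\mu$ and both pieces standard biject with pairs of standard $\mu$- and $\lambda/\mu$-polytabloids, and these form $\Z$-bases of the quotient and of $S^\mu \boxtimes S^{\lambda/\mu}$ respectively, so a surjection of free modules of equal finite rank sending basis to basis is an isomorphism.

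The step I expect to be the genuine obstacle is showing that $M_{\rhd\mu}$ is \emph{closed} under the straightening relations in a way that makes the quotient map well defined — equivalently, that when one straightens a polytabloid $e(t)$ with small-part shape $\mu$, every resulting standard polytabloid either still has small-part shape $\mu$ or has a strictly dominance-larger one, never a smaller or incomparable one. This is exactly the content one needs Lemma~\ref{lemma: dominance} for, and it is the skew generalisation of the key estimate behind James' branching theorem; the dominance order here is what buys the \emph{integral} refinement over the lexicographic arguments of \cite{J, JP}. Once this monotonicity is in hand, the identification of the layers and the equivariance are formal, and Corollary~\ref{cor: reduction} follows by taking ordinary characters and using that the character of an induced/restricted module is additive on filtration layers, together with Frobenius reciprocity to pass between $\chi^\lambda\res_{S_{(m,n)}}$ and the value $\chi^\lambda(\pi\rho)$.
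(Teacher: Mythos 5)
Your outline matches the paper's proof in its essential structure: filter $S^{\lambda}\!\res_{S_{(m,n)}}$ by the ``small-part shape'' (the paper's $m$-shape: the composition recording how many small entries lie in each column), show that straightening can only increase this shape in dominance, and identify the successive quotients with $S^\mu\boxtimes S^{\lambda/\mu}$ via $e(s\cup t)\mapsto e(s)\otimes e(t)$. However, you misattribute the key technical input. The estimate you invoke, Lemma~\ref{lemma: dominance}, concerns the \emph{row}-dominance order $\sh_{\le y}$ on skew tableaux and is proved later, for the bijective border-strip argument in \S\ref{sec: snakeCase}; it is not what drives the filtration. The paper instead proves a dedicated statement, Proposition~\ref{prop: colStdToStd}, directly from a single Garnir relation: each transposition $(x,y)$ with $x>m\ge y$ moves exactly one small entry leftward and thereby increases the $m$-shape in the dominance order on compositions. (One can in principle deduce Proposition~\ref{prop: colStdToStd} from Lemma~\ref{lemma: dominance} by conjugating the row-composition of the small part, but you would have to spell out that connection, and it is not what the paper does.)

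More substantively, you declare the $\Z S_{(m,n)}$-equivariance of the quotient map to be ``immediate from the definition of the action on tableaux.'' It is not, and this is where the real technical content lies. Acting by $\pi\in S_{\{1,\dots,m\}}$ on a standard $e(s\cup t)$ scrambles the small entries, and one must then re-straighten $e(\widetilde{s\pi}\cup t)$ and compare the result with $e(\widetilde{s\pi})\otimes e(t)$. The point is that a Garnir relation on a column involving both small and large entries splits: the part of $C_{X,Y}$ fixing the large entries reproduces exactly the straightening of $e(\widetilde{s\pi})$ inside $S^\mu$, while the remaining terms land in strictly higher $m$-shape layers. This is Lemma~\ref{lemma: GarnirSplit} and Proposition~\ref{prop: GarnirSplit}. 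You do gesture at this ``factoring'' fact, but you place it in the well-definedness of the map on generators (which is actually trivial once you use the standard basis on each side) rather than where it is needed, in the equivariance check. Finally, a small point: dominance is only a partial order, so it does not directly give a descending \emph{chain}; the paper passes to the lexicographic refinement at the end of the proof, and you should do the same rather than just ``ordering the finitely many $\mu$ compatibly with dominance.''
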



Suppose that $\lambda$ has first part $c$.
Given a $\lambda$-tableau $t$ we define the \emph{$m$-shape} of $t$ to be
the composition $(\gamma_1,\ldots, \gamma_c)$ such that
$\gamma_j$ equals the number of entries in column $j$ of $t$ not exceeding $m.$
Let $\unrhd$ denote the dominance order on compositions of the same size, defined by $\delta \unrhd \gamma$
if and only if $\ell(\delta) \le \ell(\gamma)$ and $\sum_{i=1}^k \delta_i \ge \sum_{i=1}^k \gamma_i$
whenever $1 \le k \le \ell(\delta)$.
For each composition $\gamma$ such that $\ell(\gamma) \le c$ 
we define
\[ V^{\unrhd \gamma} = \langle e(t) : \text{$t$ a column standard $\lambda$-tableau of $m$-shape $\delta$
where $\delta \unrhd \gamma$} \rangle_\Z. \]
Note that the definition of the $m$-shape agrees with the notation $b(y)$ in the proof of \cite[Theorem 3.1]{JP}.
We require the following total ordering on the set of column standard $\lambda$-tableaux, 
defined implicitly in \cite[page 30]{J}. 

\begin{definition}\label{defn: totalOrder}
Let $u$ and $t$ be column standard $\lambda$-tableaux.
We write $u > t$ if and only if the greatest entry appearing in a different column
in $u$ to $t$ appears further right in $u$ than $t$.
\end{definition}

For instance, the $>$ order
on column standard $(2,2)$-tableaux is
\[ \young(13,24) > \young(12,34) > \young(21,34) > \young(12,43) > \young(21,43) > \young(31,42)\hspace{0.5pt}. \]
Note that here, as in general, the greatest tableau under $>$ is standard.
Several times below we use
that if $x > y$ and $x$ is to the left of $y$ in the column standard tableau $u$
then  $\ewidetilde{u(x,y)}{6pt} > u$. 

\begin{proposition}\label{prop: colStdToStd}
Let $u$ be a column standard $\lambda$-tableau of $m$-shape $\gamma$. Then
$e(u)$ is equal to a $\Z$-linear combination of standard $\lambda$-polytabloids $e(t)$ where each
$t$ has $m$-shape $\mu'$ for some partition $\mu$ such that $\mu' \unrhd \gamma$.
\end{proposition}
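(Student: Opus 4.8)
The plan is to run an induction on the total order $>$ from Definition~\ref{defn: totalOrder}, using the Garnir relations~\eqref{eq: Garnir} and column relations~\eqref{eq: column} from Theorem~\ref{thm: SBT} as the straightening mechanism, while tracking the effect on the $m$-shape. If $u$ is already standard then there is nothing to prove, since we may take $\mu = \lambda$ and $\gamma$ is already a partition of the required form. So suppose $u$ is column standard but not row standard. Then there is a box $(i,j) \in [\lambda]$ with $(i,j)u > (i,j+1)u$, and we may apply the Garnir relation~\eqref{eq: Garnir} for the sets $X$ (entries of column $j$ weakly below $(i,j)$) and $Y$ (entries of column $j+1$ weakly above $(i,j+1)$): this writes $e(u)$ as a $\Z$-linear combination of $e(u')$ with $u'$ ranging over column standardizations of tableaux obtained from $u$ by nontrivial swaps of subsets of $X$ with subsets of $Y$. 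By the standard argument (as in \cite[Ch.~8]{J}, adapted to the total order as recalled just before this proposition) each such $\widetilde{u'}$ satisfies $\widetilde{u'} > u$, so by induction each $e(u')$ is already a $\Z$-linear combination of standard polytabloids whose $m$-shapes dominate their own first-part-column-count vectors in the required way; it then remains only to check that these $m$-shapes dominate $\gamma$.

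The key point, and the one requiring real care, is the interaction of the $m$-shape with a single Garnir swap followed by column straightening. Fix the Garnir element $G_{X,Y}$ for columns $j$ and $j+1$. A term $\sigma \in C_{X,Y}$ exchanges a $k$-subset $\{x_1 < \dots < x_k\} \subseteq X$ with a $k$-subset $\{y_1 < \dots < y_k\} \subseteq Y$. I would argue that if $u$ has $m$-shape $\gamma = (\gamma_1,\dots,\gamma_c)$, then after performing such a swap and then column straightening the result, the new $m$-shape $\delta$ satisfies $\delta \unrhd \gamma$. The intuition is that the entries of $Y$ lie in a column further to the right than those of $X$, so entries $\le m$ can only migrate \emph{leftward} or stay put when we swap $X$-entries into column $j+1$ and $Y$-entries into column $j$ — and column straightening only permutes entries within a fixed column, hence does not alter the $m$-shape at all. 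Thus each partial sum $\sum_{l \le k} \delta_l$ weakly increases, which is exactly $\delta \unrhd \gamma$. Combining with the inductive hypothesis applied to each $e(u')$ — whose $m$-shape is some partition $\nu'$ with $\nu' \unrhd \delta$ for the relevant $\delta$, and hence $\nu' \unrhd \gamma$ by transitivity of $\unrhd$ — and observing that each standard polytabloid occurring has $m$-shape equal to $\nu'$ for a genuine partition $\nu$ (since a standard tableau is row and column standard, its $m$-shape, counting entries $\le m$ per column, is the conjugate of a partition), finishes the argument.

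The main obstacle I anticipate is the dominance bookkeeping in the Garnir step: one must verify that swapping $X$- and $Y$-subsets genuinely cannot decrease any partial sum of the $m$-shape, and that the column straightening $\widetilde{(\cdot)}$ — which may act on \emph{several} columns if the straightening of column $j$ or $j+1$ cascades — still leaves the $m$-shape unchanged. The first is clear because $X \subseteq \text{column }j$, $Y \subseteq \text{column }j+1$ and $j < j+1$, so moving any entry $\le m$ from $Y$ to $X$'s column strictly decreases its column index while moving an entry $\le m$ from $X$ to $Y$'s column strictly increases it; a careful count of how many entries $\le m$ cross in each direction, using $\{x_1,\dots,x_k\} \subseteq X$ and $\{y_1,\dots,y_k\} \subseteq Y$, gives the partial-sum inequality. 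The second is immediate once one notes that $\widetilde{t}$ is defined to agree setwise with $t$ \emph{column by column}, so it preserves, for every column, the set of entries and in particular the count of entries not exceeding~$m$. With these two observations in hand the induction closes cleanly.
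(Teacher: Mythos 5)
Your proposal takes essentially the same route as the paper's: induct on the total order $>$, apply a Garnir relation in the offending pair of adjacent columns $j$, $j+1$, and track the $m$-shape through each swap (which replaces $\gamma_j$ by $\gamma_j + k$ and $\gamma_{j+1}$ by $\gamma_{j+1} - k$, where $k$ is the number of transpositions $(x,y)$ in $\sigma$ with $x > m \ge y$, so that $\delta \unrhd \gamma$) and through column straightening (which permutes only within columns and so leaves the $m$-shape unchanged). Two small points to tighten: in the base case one takes $\mu = \gamma'$, not $\mu = \lambda$, so that $\mu' = \gamma$; and your phrase that entries $\le m$ ``only migrate leftward or stay put'' is false for individual entries (an $x \in X$ with $x \le m$ moves right), though you correctly observe afterwards that what is needed is a net count of crossings, which is precisely the $k$ above.
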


\begin{proof}
If $u$ is standard then $\gamma$ is a partition, and there is nothing to prove.
If~$u$ is not standard then there exists $(i,j) \in [\lambda]$ such that $(i,j)u > (i,j+1)u$.
Let $X$ and $Y$ be as defined in~\eqref{eq: GarnirElement}.
By~\eqref{eq: Garnir} we have
\[ 0 = e(u) + \sum_{\sigma \in C_{X,Y}} \epsilon_{u \sigma}\sgn(\sigma)  e(\widetilde{u \sigma}) \]
where $\widetilde{u \sigma}$ 
and
$\epsilon_{u \sigma} \in \{+1,-1\}$ are 
as defined 
at the start of \S\ref{sec: Garnir}.
Let $\sigma \in C_{X,Y}$.
Since the minimum of $X$ exceeds the maximum of $Y$, 
we have $x > y$ for each transposition $(x,y)$ in~$\sigma$.
Hence $\widetilde{u \sigma} > u$. 
Write~$\delta$ for the $m$-shape of $\widetilde{u \sigma}.$ 
If there are exactly $k$ transpositions $(x,y)$ in $\sigma$
such that $x > m \ge y,$ then $\delta_j = \gamma_j + k$, $\delta_{j+1} = \gamma_{j+1}-k$
and $\delta_{j'} = \gamma_j$ for $j' \not= j, j+1$. Hence $\delta \unrhd \gamma$.
The lemma now follows by induction on the $\ge$ and $\unrhd$ orders.
\end{proof}

%

\begin{corollary}\label{cor: SBTmu}
Let $\mu$ be a subpartition of $\lambda$ of size $m$. Then $V^{\unrhd \mu'}$ is
a $\Z S_{(m,n)}$-submodule of $S^\lambda$ with $\Z$-basis given by the
standard $\lambda$-tableaux of $m$-shape $\nu'$ such that $\nu' \unrhd \mu'$.
\end{corollary}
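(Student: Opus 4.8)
The plan is to bootstrap everything from Proposition~\ref{prop: colStdToStd} together with the Standard Basis Theorem. First I would show that $V^{\unrhd\mu'}$ is spanned (over $\Z$) by the standard $\lambda$-polytabloids $e(t)$ with $t$ of $m$-shape $\nu'$ for some partition $\nu$ with $\nu'\unrhd\mu'$. By definition $V^{\unrhd\mu'}$ is spanned by the $e(u)$ with $u$ column standard of $m$-shape $\delta\unrhd\mu'$; applying Proposition~\ref{prop: colStdToStd} to each such $u$ rewrites $e(u)$ as a $\Z$-combination of standard $e(t)$ whose $m$-shapes are partitions $\nu'$ with $\nu'\unrhd\delta\unrhd\mu'$, and transitivity of $\unrhd$ finishes the spanning claim. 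Linear independence of this spanning set is immediate from Theorem~\ref{thm: SBT}(ii), since these $e(t)$ are a subset of the full $\Z$-basis of $S^\lambda$ consisting of all standard $\lambda$-polytabloids; hence they form a $\Z$-basis of $V^{\unrhd\mu'}$, which in particular shows $V^{\unrhd\mu'}$ is a direct summand (as a $\Z$-module) of $S^\lambda$.

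The remaining point is that $V^{\unrhd\mu'}$ is closed under the action of $S_{(m,n)}=S_{\{1,\dots,m\}}\times S_{\{m+1,\dots,m+n\}}$. By~\eqref{eq:cyclic} we have $e(t)\sigma=e(t\sigma)$ for any $\sigma\in S_{m+n}$, so it suffices to observe that if $t$ is a column standard $\lambda$-tableau of $m$-shape $\gamma$ and $\sigma\in S_{(m,n)}$, then $t\sigma$ has the same $m$-shape $\gamma$: acting by $\sigma$ permutes the entries $\{1,\dots,m\}$ among themselves and the entries $\{m+1,\dots,m+n\}$ among themselves, and column $j$ of $t\sigma$ consists of the images under $\sigma$ of the entries of column $j$ of $t$, so the count $\gamma_j$ of entries not exceeding $m$ in column $j$ is unchanged. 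However $t\sigma$ need not be column standard; passing to its column straightening $\widetilde{t\sigma}$ (as at the start of \S\ref{sec: Garnir}) gives $e(t\sigma)=\epsilon_{t\sigma}\,e(\widetilde{t\sigma})$, and reordering entries within a column does not change the column-by-column counts, so $\widetilde{t\sigma}$ again has $m$-shape $\gamma$. Thus for each generator $e(t)$ of $V^{\unrhd\mu'}$ with $t$ column standard of $m$-shape $\delta\unrhd\mu'$, the element $e(t)\sigma$ lies in the span of column standard polytabloids of $m$-shape $\delta$, hence in $V^{\unrhd\mu'}$; since these generate, $V^{\unrhd\mu'}$ is a $\Z S_{(m,n)}$-submodule.

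I do not expect a serious obstacle here: the corollary is essentially a packaging of Proposition~\ref{prop: colStdToStd} and Theorem~\ref{thm: SBT}. The only mild subtlety is the interplay between column standardness and the $S_{(m,n)}$-action — one must not assume $t\sigma$ stays column standard — but this is handled cleanly by the column straightening operation, whose only effect is to permute entries within columns (with a sign), and therefore preserves the $m$-shape. A second bookkeeping point worth stating explicitly is the transitivity argument that collapses the two layers of the $\unrhd$ relation (from $\delta$ down to $\mu'$, and from the partition $\nu'$ produced by straightening up to $\delta$) into the single condition $\nu'\unrhd\mu'$ in the statement.
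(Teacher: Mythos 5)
Your proposal is correct and follows essentially the same route as the paper: identify the $\Z$-basis via Proposition~\ref{prop: colStdToStd} and linear independence from Theorem~\ref{thm: SBT}(ii), then check $S_{(m,n)}$-stability by observing that the action preserves $m$-shape and that column straightening does too. The only (harmless) divergence is that you verify stability on the column standard generators of $V^{\unrhd\mu'}$ directly, whereas the paper verifies it on the standard basis elements and routes through $V^{\unrhd\nu'}\subseteq V^{\unrhd\mu'}$; your version is if anything slightly more economical, since it does not need to re-invoke Proposition~\ref{prop: colStdToStd} in the stability step. One micro-gap worth stating explicitly: after showing $V^{\unrhd\mu'}$ is contained in the span of the standard $e(t)$ with $m$-shape $\nu'\unrhd\mu'$, you should note that each such $e(t)$ already lies in $V^{\unrhd\mu'}$ (because a standard tableau is column standard), so that this set really is a spanning set \emph{inside} $V^{\unrhd\mu'}$ and hence a basis.
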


\begin{proof}
Since the standard $\lambda$-polytabloids are linearly independent by Theorem~\ref{thm: SBT}(ii), 
it follows immediately
from Proposition~\ref{prop: colStdToStd} that $V^{\unrhd \mu'}$ has a $\Z$-basis as claimed.
If $\pi \in S_{(m,n)}$ and $s$ is a standard $\lambda$-tableau of $m$-shape $\nu'$
then $s \pi$ also has $m$-shape $\nu'$, as does $\widetilde{s \pi}$. 
By~\eqref{eq: column} and Proposition~\ref{prop: colStdToStd}, 
$e(s\pi) = \pm e(\widetilde{s\pi}) \in V^{\unrhd \nu'} \subseteq V^{\unrhd \mu'}$. Hence $V^{\unrhd \mu'}$ is a
$\Z S_{(m,n)}$-module.
\end{proof}

Given a  $\mu$-tableau $u$ with (as usual) entries  $\{1,\ldots, m\}$
and a $\lambda / \mu$-tableau~$v$ with entries $\{m+1,\ldots, m+n\}$, let
$u \hskip0.5pt\cup\hskip0.5pt v$ denote the  $\lambda$-tableau defined by 
\[ (i,j)(u \cup v) = \begin{cases} (i,j)u & \text{if $(i,j) \in [\mu]$} \\
(i,j)v & \text{if $(i,j) \in [\lambda/\mu]$.} \end{cases}\]
Clearly every $\lambda$-tableau of $m$-shape $\mu'$ is of this form. 
We shall show
that the action of $S_{(m,n)}$ on standard $\lambda$-polytabloids is compatible
 with this factorization. We require the following lemma and proposition,
 which are illustrated in Example~\ref{ex: Garnir} below.

\begin{lemma}\label{lemma: GarnirSplit}
Let $\mu$ be a subpartition of $\lambda$ of size $m$.
Let $u$ be a column standard
$\mu$-tableau and let $v$ be a $\lambda/\mu$-tableau.
Let $(i,j) \in [\mu]$ be a box such
that 
\[ m \ge (i,j)u > (i,j+1)u.\] 
Let $r = \mu_j'$ so $(r, j)$ is the lowest box in column $j$ of $u$, and define
\begin{align*}
X &= \{(i,j)u, (i+1,j)u, \ldots, (r,j)u, (r+1,j)v, \ldots \}, \\
Y &= \{\ldots, (i-1,j+1)u, (i,j+1)u \}, \\
X^\star &= \{(i,j)u,(i+1,j)u, \ldots, (r,j)u\}.
\end{align*}
Let $C_{X^\star,Y} = \{\sigma \in C_{X,Y} : x \sigma = x \text{ for all $x \in X \backslash X^\star$} \}$.
Then
\[ 0 = e(u \cup v) +  \sum_{\sigma^\star \in C_{X^\star,Y}} \sgn(\sigma^\star)e(u \cup v) \sigma^\star
 + 
  \sum_{\sigma \in C_{X,Y} \backslash C_{X^\star,Y}} \sgn(\sigma)e(u \cup v)\sigma \]
where
\begin{itemize}
\item[(i)] for each $\sigma^\star$, we have $e(u \cup v)\sigma^\star = e(u \sigma^\star \cup v)$
and $\widetilde{u\sigma^\star} > u$;
\item[(ii)] for each $\sigma$, $e(u \cup v) \sigma$ is a $\Z$-linear combination of polytabloids
$e(s)$ for standard tableaux $s$ of $m$-shape $\nu'$ where $\nu' \rhd \mu'$.
\end{itemize}
\end{lemma}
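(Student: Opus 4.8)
The plan is to start from the Garnir relation~\eqref{eq: Garnir} applied to the $\lambda$-tableau $u \cup v$ with the sets $X$ and $Y$ defined in the statement. Since $(i,j)$ lies in $[\mu]$ and the entries of $u$ are all at most $m$ while those of $v$ exceed $m$, the column $j$ of $u \cup v$ below box $(i,j)$ consists of the $X$ listed: first the remaining entries of column $j$ of $u$ (from row $i$ down to row $r = \mu_j'$), then the entries of column $j$ of $v$. The set $Y$ is entirely inside $u$. Thus~\eqref{eq: Garnir} gives $0 = e(u\cup v) + \sum_{\sigma \in C_{X,Y}} \sgn(\sigma)\, e(u\cup v)\sigma$, and the whole content of the lemma is to split this sum according to whether $\sigma$ moves only elements of $X^\star$ (equivalently, only entries coming from $u$) or moves at least one entry of $X \setminus X^\star$ (an entry coming from $v$), and to identify each piece.

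For part~(i), if $\sigma^\star \in C_{X^\star,Y}$ then every transposition in $\sigma^\star$ swaps an entry in column $j$ of $u$ with an entry in column $j+1$ of $u$; in particular $\sigma^\star$ fixes every box of $[\lambda/\mu]$ pointwise, so $(u\cup v)\sigma^\star = (u\sigma^\star)\cup v$, whence $e(u\cup v)\sigma^\star = e\bigl((u\sigma^\star)\cup v\bigr) = e(u\sigma^\star \cup v)$ by~\eqref{eq:cyclic}. Then I would observe, exactly as in the proof of Proposition~\ref{prop: colStdToStd}, that because $\min X^\star > \max Y$ (the displayed inequality $m \ge (i,j)u > (i,j+1)u$ together with column-standardness of $u$ forces every element of $X^\star$ to exceed every element of $Y$), each transposition $(x,y)$ of $\sigma^\star$ has $x > y$ with $x$ to the left of $y$ in $u$; the remark preceding Proposition~\ref{prop: colStdToStd} then gives $\widetilde{u\sigma^\star} > u$ for $\sigma^\star \ne 1$, and for a product of such transpositions one iterates this remark. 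Since all entries involved are $\le m$, the $m$-shape is unchanged: $u\sigma^\star$, and hence $\widetilde{u\sigma^\star}$, still has $m$-shape $\mu'$.

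For part~(ii), take $\sigma \in C_{X,Y}\setminus C_{X^\star,Y}$, so $\sigma$ contains at least one transposition $(x,y)$ with $x \in X\setminus X^\star$, i.e.\ $x$ is an entry of $v$ (hence $x > m$) and $y \in Y$ is an entry of $u$ (hence $y \le m$). The column straightening $\widetilde{(u\cup v)\sigma}$ is a standard $\lambda$-tableau, and by~\eqref{eq: column} we have $e((u\cup v)\sigma) = \pm e(\widetilde{(u\cup v)\sigma})$. I would then compute its $m$-shape $\delta$ exactly as in Proposition~\ref{prop: colStdToStd}: if $\sigma$ has precisely $k \ge 1$ transpositions moving an entry $>m$ into column $j$ and an entry $\le m$ out of column $j$, then $\delta_j = \mu'_j + k$, $\delta_{j+1} = \mu'_{j+1} - k$, and $\delta_{j'} = \mu'_{j'}$ otherwise; since $k \ge 1$ this gives $\delta \rhd \mu'$ strictly. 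Column straightening only permutes entries within columns and so preserves the $m$-shape, so each such $e(s)$ has $m$-shape $\nu' = \delta \rhd \mu'$; if $\delta$ is already a partition this is the claimed $\nu'$, and otherwise one appeals once more to Proposition~\ref{prop: colStdToStd} to rewrite $e(\widetilde{(u\cup v)\sigma})$ in terms of standard polytabloids of $m$-shape dominating $\delta \rhd \mu'$. Finally I would rewrite the Garnir sum as the three displayed groups of terms, the middle group being $\sum_{\sigma^\star \in C_{X^\star,Y}}$ (omitting $\sigma^\star = 1$, whose contribution is the isolated $e(u\cup v)$) and the last being $\sum_{\sigma \in C_{X,Y}\setminus C_{X^\star,Y}}$. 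The only mildly delicate point—and the step I expect to require the most care—is checking that $C_{X,Y}$ decomposes cleanly as this disjoint union and that a $\sigma$ fixing $X\setminus X^\star$ pointwise really does fix every box of $[\lambda/\mu]$, so that the factorization $e(u\cup v)\sigma^\star = e(u\sigma^\star \cup v)$ is legitimate; this is where one must use that the boxes of column $j$ of $v$ are contiguous and lie strictly below row $r$, which is exactly the hypothesis that $(i,j) \in [\mu]$ with $r = \mu'_j$.
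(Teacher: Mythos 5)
Your argument is correct and follows the same route as the paper: split the Garnir sum over $C_{X,Y}$ according to whether $\sigma$ fixes $X\setminus X^\star$ pointwise, handle the $C_{X^\star,Y}$ terms via the observation after Definition~\ref{defn: totalOrder}, and handle the remaining terms via the $m$-shape computation in the proof of Proposition~\ref{prop: colStdToStd}. Two small slips worth noting: the identity is not an element of $C_{X^\star,Y}$ (by the paper's definition $C_{X,Y}$ contains only non-identity products, with $1$ added separately in $G_{X,Y}$), so there is nothing to ``omit''; and a transposition $(x,y)$ with $x>m\ge y$ moves the entry $>m$ \emph{out of} column $j$ and the entry $\le m$ \emph{into} column $j$ (your formula $\delta_j=\mu_j'+k$ is right, but your prose describing the direction of the swap has it reversed).
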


\begin{proof}
Since $G_{X,Y} = 1 + \sum_{\sigma^\star \in
C_{X^\star,Y}} \sgn(\sigma^\star)\sigma^\star 
+ \sum_{\sigma \in C_{X,Y} \backslash C_{X^\star,Y}} \sgn(\sigma)\sigma$,
the displayed equation follows from~\eqref{eq: Garnir}.
Since $C_{X^\star, Y} \subseteq S_{\{1,\ldots, m\}}$,
 (i) follows from the observation
after Definition~\ref{defn: totalOrder}.
Take $\sigma \in C_{X , Y} \backslash C_{X^\star , Y}$ and let $w = (u \cup v) \sigma$.
Since $\sigma$ involves a transposition
$(x,y)$ with $x > m \ge y$, the statistic $k$ in the proof of Proposition~\ref{prop: colStdToStd}
is non-zero. Hence the $m$-shape of $e(\widetilde{\hskip-0.5ptw})$ is $\delta$
for some composition
$\delta$ with $\delta \rhd \mu'$. The statement
of Proposition~\ref{prop: colStdToStd} now implies that 
$e(\widetilde{\hskip-0.5ptw})$ 
is a $\Z$-linear combination of standard polytabloids $e(s)$ for~$s$ of $m$-shape $\nu'$ where
$\nu' \unrhd \delta$. Hence $\nu' \rhd \mu'$, as required for (ii).
\end{proof}

\begin{proposition}\label{prop: GarnirSplit}
Let $\mu$ be a subpartition of $\lambda$ of size $m$. Let $u$ be a column standard
$\mu$-tableau and let $t$ be a standard $\lambda/\mu$-tableau.
If $e(u) = \sum_S \alpha_S e(S)$
where the sum is over all standard $\mu$-tableaux $S$ and $\alpha_S \in \Z$ for each $S$ then
\[ e(u \cup t) \in \sum_S \alpha_s e(S \cup t) + \sum_{\nu' \rhd \mu'} V^{\unrhd \nu'}. \]
\end{proposition}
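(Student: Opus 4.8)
The plan is to prove the statement by induction, using the column straightening process on $u$ guided by Lemma~\ref{lemma: GarnirSplit}, and tracking how the $\lambda/\mu$-part $t$ rides along. The key point is that every straightening step applied to $u$ is a Garnir relation supported inside $S_{\{1,\ldots,m\}}$, and such relations, when applied to $u \cup t$ rather than to $u$, produce a main term that is again of the form $u' \cup t$ (with $u'$ column standard of $m$-shape $\mu'$) together with error terms lying in higher pieces of the filtration.

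First I would set up the induction. Since $e(u) = \sum_S \alpha_S e(S)$, by Theorem~\ref{thm: SBT}(i) this expression is obtained from $u$ by a sequence of column relations~\eqref{eq: column} and Garnir relations~\eqref{eq: Garnir}; equivalently, induct on the position of $u$ in the total order $>$ of Definition~\ref{defn: totalOrder} among column standard $\mu$-tableaux. If $u$ is standard there is nothing to prove, since then $u \cup t$ is already standard of $m$-shape $\mu'$. Otherwise pick a box $(i,j) \in [\mu]$ with $m \ge (i,j)u > (i,j+1)u$ and apply Lemma~\ref{lemma: GarnirSplit} to $u$ and $t$ (in the role of its $v$). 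The displayed identity in that lemma reads
\[ e(u \cup t) = -\sum_{\sigma^\star \in C_{X^\star,Y}} \sgn(\sigma^\star)\, e(u\sigma^\star \cup t)
 - \sum_{\sigma \in C_{X,Y}\setminus C_{X^\star,Y}} \sgn(\sigma)\, e((u\cup t)\sigma). \]
By Lemma~\ref{lemma: GarnirSplit}(ii) every term in the second sum already lies in $\sum_{\nu'\rhd\mu'} V^{\unrhd\nu'}$, so it can be absorbed into the error term. For the first sum, each $u\sigma^\star$ is a column standard tableau (after column straightening, $\widetilde{u\sigma^\star}$, picking up a sign $\epsilon_{u\sigma^\star}$) which is strictly greater than $u$ in the $>$ order by part (i) of the lemma, and has the same $m$-shape $\mu'$.

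Now I would invoke the inductive hypothesis. The identity $e(u) = -\sum_{\sigma^\star} \sgn(\sigma^\star) e(u\sigma^\star)$ (obtained by restricting the Garnir relation in $S_{\{1,\ldots,m\}}$) and the given expansion $e(u)=\sum_S\alpha_S e(S)$, combined with the Standard Basis Theorem for $\mu$-tableaux, let me assume inductively that each $e(u\sigma^\star \cup t)$ (equivalently $\epsilon_{u\sigma^\star} e(\widetilde{u\sigma^\star}\cup t)$) satisfies the proposition: it lies in $\sum_S \beta_S^{(\sigma^\star)} e(S\cup t) + \sum_{\nu'\rhd\mu'} V^{\unrhd\nu'}$ where $e(u\sigma^\star) = \sum_S \beta_S^{(\sigma^\star)} e(S)$. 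Summing with the coefficients $-\sgn(\sigma^\star)$ and using that $\sum_{\sigma^\star} (-\sgn\sigma^\star)\, e(u\sigma^\star) = e(u) = \sum_S \alpha_S e(S)$, linear independence of standard $\mu$-polytabloids forces $\sum_{\sigma^\star}(-\sgn\sigma^\star)\beta_S^{(\sigma^\star)} = \alpha_S$, and the claimed membership follows.

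The main obstacle I anticipate is bookkeeping the error terms carefully enough to see they stay inside $\sum_{\nu'\rhd\mu'} V^{\unrhd\nu'}$ and do not feed back into the $S\cup t$ part: one must check that straightening a tableau of $m$-shape $\nu'$ with $\nu'\rhd\mu'$ only produces tableaux of $m$-shape $\unrhd\nu'$, hence still $\rhd\mu'$, which is exactly Proposition~\ref{prop: colStdToStd} applied within $V^{\unrhd\nu'}$ (and this is a $\Z S_{(m,n)}$-submodule by Corollary~\ref{cor: SBTmu}). A secondary subtlety is making sure the induction is well-founded: the tableaux $\widetilde{u\sigma^\star}$ are strictly larger in the $>$ order but still column standard $\mu$-tableaux, and this order is finite, so the recursion terminates at standard tableaux. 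Once these two points are nailed down the argument closes.
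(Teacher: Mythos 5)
Your proposal is correct and follows essentially the same route as the paper's proof: induction on the column-straightening order $>$, applying Lemma~\ref{lemma: GarnirSplit} to split the Garnir relation into the $C_{X^\star,Y}$ terms (handled by the inductive hypothesis since $\widetilde{u\sigma^\star} > u$) and the remaining terms (which land in $\sum_{\nu'\rhd\mu'} V^{\unrhd\nu'}$ by part~(ii)). The paper compresses the closing bookkeeping into ``the result now follows by induction on the $\ge$ order,'' which you have correctly unpacked using the Garnir relation $e(u) = -\sum_{\sigma^\star}\sgn(\sigma^\star)e(u\sigma^\star)$ for $\mu$-tableaux and linear independence of standard $\mu$-polytabloids.
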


\begin{proof}
If $u$ is standard the result is obvious. 
If not, there exists a box $(i,j) \in [\mu]$ such
that $m \ge (i,j)u > (i+1,j)u$. Let $X^\star$ and $Y$ be as in~Lemma~\ref{lemma: GarnirSplit}.
By Lemma~\ref{lemma: GarnirSplit}(ii) we have
\[ e(u \cup t)  \in -\sum_{\sigma^\star \in C_{X^\star, Y}} \sgn(\sigma^\star) e(u  \cup t) \sigma^\star
+ \sum_{\nu' \rhd \mu'} V^{\unrhd \nu'}. \]
Using Lemma~\ref{lemma: GarnirSplit}(i), the result now follows by induction on the $\ge$ order.
\end{proof}

We also need the analogous lemma in which $u$ is a $\lambda/\mu$-tableau, $(i,j) \in [\lambda/\mu]$ and $(i,j)u > (i,j+1)u > m$, and
$Y^\star = \{(r,j+1)u,\ldots, (i,j+1)u\}$ where now $r = \mu_{j+1}'+1$,
and the relevant sets of coset representatives are $C_{X,Y^\star}$ and $C_{X,Y} \backslash C_{X,Y^\star}$.
It implies the~analogous proposition in which $e(t \cup v)$ is straightened, where $t$ is a standard
$\mu$-tableau and $v$ is a column standard $\lambda/\mu$-tableau.
The proofs are entirely analogous.

The following example makes explicit the statements of Lemma \ref{lemma: GarnirSplit} and Proposition \ref{prop: GarnirSplit}. 

\begin{example}\label{ex: Garnir}
Let  $u$, $t$ and $u \cup t$ be the skew tableaux shown below.
\[u =\, \young(12,43)\,, \quad 
t=\, \young(::5,::7,68)\,, \quad u \cup t = \,
\young(125,437,68)\, .\]
As $4 = (2,1)(u\cup t) > (2,2)(u\cup t) = 3$, to straighten $u \cup t$ we define $X = \{4,6\}$ and $Y = \{2,3\}.$
The relation $e(u \cup t)G_{X,Y} = 0$ gives
\begin{align*}
e(u \cup t) = &-e\!\left(\,\young(135,247,68)\,\right) 
+ e\!\left(\,\young(125,347,68)\,\right)
\\
&+e\!\left(\,\young(135,267,48)\,\right) 
- e\!\left(\,\young(125,367,48)\,\right) - e\!\left(\,\young(145,267,38)\,\right).
\end{align*}
In the notation of Lemma \ref{lemma: GarnirSplit}, we have $X^\star = \{4\}.$ 
The standard polytabloids in the top and bottom lines 
come from the permutations 
in $C_{X^\star,Y}$ and $C_{X,Y} \backslash C_{X^\star,Y}$, respectively.
Furthermore, the $4$-shape of each polytabloid in the top line is $(2,2)$
and in the bottom line is $(3,1)$. Therefore
\[e(u \cup t) \in -e\!\left(\,\young(135,247,68)\,\right) + e\!\left(\,\young(125,347,68)\,\right) + V^{\unrhd (3,1)},\]
as expected from Proposition \ref{prop: GarnirSplit}.
\end{example}

\begin{proof}[Proof of Theorem \ref{thm: filtration}]
We start by proving
that there is a $\Z S_{(m,n)}$-module isomorphism
\[ \frac{V^{\unrhd \mu'}}{\sum_{\nu' \rhd \mu'} V^{\unrhd \nu'}} \stackrel{\phi}{\cong} S^{\mu} \boxtimes S^{\lambda / \mu}. \]
By Corollary~\ref{cor: SBTmu}, the module on the left-hand side has a $\Z$-basis given by the set of standard $\lambda$-tableaux of $m$-shape $\mu^\prime.$ 
Therefore the linear extension $\phi$ of the map $e(s \cup t) \phi = e(s) \otimes e(t),$
where $s \cup t$ is a standard $\lambda$-tableau of $m$-shape $\mu^\prime,$ is a well-defined $\Z$-linear morphism. 
Since the tensors $e(s) \otimes e(t)$ for $s$ a standard $\mu$-tableau and 
$t$ a standard $\lambda /\mu$-tableau form a basis for $S^\mu \boxtimes S^{\lambda / \mu}$,
$\phi$ is a $\Z$-linear isomorphism.

To show that $\phi$ is a $\Z S_{(m,n)}$-module homomorphism,
it suffices to consider the actions of
$S_{\{1,\ldots, m\}}$ and $S_{\{m+1,\ldots, m+n\}}$ separately. Let $\pi \in S_{\{1,\ldots, m\}}$
and let $s \cup t$ be a standard $\lambda$-tableau. 
Observe that $\ewidetilde{(s \cup t)\pi}{6.5pt} = \widetilde{s\pi} \cup t$ and
$\epsilon_{(s\cup t)\pi} = \epsilon_{s \pi}$. 
Suppose that $e(\widetilde{s\pi}) = \sum_S \alpha_S e(S)$ where the sum is over all standard $\mu$-tableaux $S$.  
On the one hand
\[ \bigl( e(s) \otimes e(t) \bigr) \pi 
= -\epsilon_{s\pi} \sum_S \alpha_S e(S) \otimes e(t). \]
On the other hand, by Proposition~\ref{prop: GarnirSplit} we have
\[ e(s \cup t) \pi \in -\epsilon_{s\pi}\sum_S \alpha_S e(S \cup t) 
+ \sum_{\nu' \rhd \mu'} V^{\unrhd \nu'}. \]
The argument is entirely analogous for the action of $S_{\{m+1,\ldots, m+n\}}$.

We now write $\ge$ for the lexicographic order of compositions. 
We define $V^{\ge \mu'}$ in a similar way to $V^{\unrhd \mu'},$ replacing the condition $\delta  \unrhd \mu'$ with $\delta \ge \mu'.$ 
Since $\nu' \unrhd \mu'$ implies that $\nu' \ge \mu',$ 
replacing every instance of $\unrhd$ with $\ge$ in Proposition \ref{prop: colStdToStd} and Corollary \ref{cor: SBTmu} implies that $V^{\ge \mu'}$ is also a $\Z S_{(m,n)}$-module. 
Moreover, $V^{\ge \mu'}$ has a $\Z$-basis given by the standard $\lambda$-tableaux of $m$-shape $\nu'$ such that $\nu' \ge \mu',$ and so there is an isomorphism
\[ \frac{V^{\ge \mu'}}{\sum_{\nu' > \mu'} V^{\ge \mu'}} \cong \frac{V^{\unrhd \mu'}}{\sum_{\nu' \rhd \mu'} V^{\unrhd \nu'}} \cong S^{\mu} \boxtimes S^{\lambda / \mu}. \]
Therefore the modules $V^{\ge \mu'},$ where $\mu$ ranges over all subpartitions of $\lambda$ of size $m,$ give the required chain of submodules. 
\end{proof}

\begin{corollary}\label{cor: reduction}
Let $\rho \in S_{m+n}$ be an $n$-cycle and let $\pi$ be a permutation of the remaining
$m$ numbers. Then
\[ \chi^\lambda(\pi \rho) = \sum_\mu \chi^\mu(\pi) \chi^{\lambda/\mu}(\rho) \]
where the sum is over all subpartitions $\mu$ of $\lambda$ of size $m$.
\end{corollary}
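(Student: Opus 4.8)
The plan is to deduce Corollary~\ref{cor: reduction} from Theorem~\ref{thm: filtration} by a standard character-theoretic argument, using the fact that characters are additive on short exact sequences (equivalently, constant on the successive quotients of any filtration). Since $\pi \in S_{\{1,\ldots,m\}}$ and $\rho$ is an $n$-cycle on $\{m+1,\ldots,m+n\}$, the element $\pi\rho$ lies in the Young subgroup $S_{(m,n)} = S_{\{1,\ldots,m\}} \times S_{\{m+1,\ldots,m+n\}}$, and its trace on $S^\lambda$ is unchanged by restriction to $S_{(m,n)}$. First I would invoke Theorem~\ref{thm: filtration} to write
\[ \chi^\lambda \res_{S_{(m,n)}} = \sum_{\mu} \bigl( \chi^\mu \boxtimes \chi^{\lambda/\mu} \bigr), \]
where the sum runs over all subpartitions $\mu$ of $\lambda$ of size $m$; this holds at the level of characters because the character of a module equals the sum of the characters of the successive quotients of any submodule chain, and the quotients here are exactly the $S^\mu \boxtimes S^{\lambda/\mu}$, each occurring once.

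Next I would evaluate both sides at $\pi\rho$. By the definition of the outer tensor product, $\bigl(\chi^\mu \boxtimes \chi^{\lambda/\mu}\bigr)(\pi\rho) = \chi^\mu(\pi)\,\chi^{\lambda/\mu}(\rho)$, since $\pi$ is the first-factor component and $\rho$ the second-factor component of $\pi\rho$ under the identification $S_{(m,n)} \cong S_{\{1,\ldots,m\}} \times S_{\{m+1,\ldots,m+n\}}$. Summing over $\mu$ gives exactly
\[ \chi^\lambda(\pi\rho) = \sum_\mu \chi^\mu(\pi)\,\chi^{\lambda/\mu}(\rho), \]
as claimed. One small point to check is that the characters in question are the ordinary (complex) characters: Theorem~\ref{thm: filtration} is stated over $\Z$, so I would tensor the whole filtration with $\C$, which preserves exactness and hence the filtration, and recall from \S\ref{sec: skew_back} that $\chi^\mu$ and $\chi^{\lambda/\mu}$ are by definition the characters of $S^\mu \otimes_\Z \C$ and $S^{\lambda/\mu}\otimes_\Z\C$ respectively.

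There is essentially no obstacle here: the corollary is a routine consequence of Theorem~\ref{thm: filtration}, and all the real work has already been done in establishing that filtration. The only thing worth stating carefully is the reduction to the Young subgroup and the behaviour of characters under the outer tensor product; both are standard (see \cite[\S 43]{CR}). I would therefore keep the proof to a few lines.
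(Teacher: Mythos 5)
Your proposal is correct and takes essentially the same route as the paper: restrict $\chi^\lambda$ to $S_{(m,n)}$, use the filtration of Theorem~\ref{thm: filtration} to write the restricted character as $\sum_\mu \chi^\mu \times \chi^{\lambda/\mu}$, and evaluate at $\pi\rho$. The one small point you elide is that $\pi$ and $\rho$ need not act on $\{1,\ldots,m\}$ and $\{m+1,\ldots,m+n\}$ respectively, so one should first replace $\pi\rho$ by a suitable conjugate lying in $S_{(m,n)}$ (harmless, as characters are class functions); the paper makes this step explicit.
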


\begin{proof}
By taking a suitable conjugate of $\pi\rho$ we may assume that $\pi \in S_{\{1,\ldots, m\}}$
and $\rho \in S_{\{m+1,\ldots, m+n\}}$.
Taking characters in Theorem~\ref{thm: filtration} gives
\begin{equation}
\label{eq: charRes} 
\chi^\lambda \Res_{S_{(m,n)}} = \sum_\mu \chi^\mu \times \chi^{\lambda / \mu}
\end{equation}
where the sum is over all subpartitions $\mu$ of $\lambda$ of size $m$.
Now evaluate both sides at $\pi \rho$.
\end{proof}


\section{Pieri's rule and Young's rule}\label{sec: Pieri}
A skew partition $\lambda /\mu$ is a \emph{vertical} (resp.~\emph{horizontal}) \emph{strip} 
if $[\lambda/\mu]$ has at most one box in each row (resp.~column). 
Given $n \in \N,$ we write $\sgn_{S_n}$ for the character and the $\C S_n$-module afforded by the sign representation of $S_n$

\begin{theorem}[Pieri's rule]\label{thm: Pieri}
Let $\lambda$ be a partition of $m + n$. If $\mu$ is a subpartition of $\lambda$
of size $m$ then
\[ \langle \chi^\lambda \Res_{S_m \times S_n}, \chi^\mu \times \sgn_{S_n} \rangle = \begin{cases}
1 & \text{if $\lambda / \mu$ is a vertical strip} \\
0 & \text{otherwise.} \end{cases} \]
\end{theorem}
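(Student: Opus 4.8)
The plan is to deduce Pieri's rule directly from the filtration in Theorem~\ref{thm: filtration} by taking inner products with $\chi^\mu \times \sgn_{S_n}$. By equation~\eqref{eq: charRes} in Corollary~\ref{cor: reduction}, we have
\[ \chi^\lambda \Res_{S_m \times S_n} = \sum_{\nu} \chi^\nu \times \chi^{\lambda/\nu}, \]
where the sum runs over subpartitions $\nu$ of $\lambda$ of size $m$. Since the characters $\chi^\nu$ of $S_m$ are pairwise orthogonal, the inner product on the left-hand side of the statement collapses to
\[ \langle \chi^\lambda \Res_{S_m \times S_n}, \chi^\mu \times \sgn_{S_n} \rangle = \langle \chi^{\lambda/\mu}, \sgn_{S_n} \rangle. \]
Thus everything reduces to computing the multiplicity of the sign character in the skew Specht module $S^{\lambda/\mu} \otimes_\Z \C$.

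Next I would identify $\langle \chi^{\lambda/\mu}, \sgn_{S_n}\rangle$ with the dimension of the space of $S_n$-homomorphisms from $S^{\lambda/\mu}$ to $\sgn_{S_n}$ (working over $\C$), equivalently with $\dim \Hom_{\C S_n}(S^{\lambda/\mu} \otimes \sgn_{S_n}, \C)$, i.e.\ the dimension of the fixed points of $\sgn_{S_n}$ twisted into the dual. The cleanest route is to use the standard basis of $S^{\lambda/\mu}$ from Theorem~\ref{thm: SBT}: a homomorphism to $\sgn_{S_n}$ is determined by a linear functional $\theta$ on $S^{\lambda/\mu}$ satisfying $\theta(e(t)\sigma) = \sgn(\sigma)\theta(e(t))$ for all $\sigma$. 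Using $e(t)\sigma = e(t\sigma)$ from~\eqref{eq:cyclic}, such a $\theta$ is governed entirely by its value on a single polytabloid up to the column relations and Garnir relations; the question becomes whether the relation $e(t)G_{X,Y} = 0$ of~\eqref{eq: Garnir} is consistent with the sign-equivariance, and this is exactly where the combinatorics of $\lambda/\mu$ enters.

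The key computation is to evaluate what the Garnir relation forces. If $\lambda/\mu$ has two boxes in some row, say in columns $j$ and $j+1$, then choosing $X$ to be a suitable single-element set in column $j$ and $Y$ a single-element set in column $j+1$, the Garnir element $G_{X,Y}$ is $1$ plus a single transposition $(x,y)$ (possibly plus more terms, but in the two-box-in-a-row situation one can arrange $|X| = |Y| = 1$), and sign-equivariance together with $e(t)G_{X,Y} = 0$ forces $\theta(e(t)) = -\theta(e(t))$, hence $\theta = 0$ on that polytabloid and therefore, by cyclicity and~\eqref{eq: column}, on all of $S^{\lambda/\mu}$. Conversely, if $\lambda/\mu$ is a vertical strip, then no two boxes lie in the same row, every column has at most one box with a box to its right sharing a row — so all the relevant $X$ and $Y$ are forced to straddle disjoint columns with no interaction — and one checks that the assignment $\theta(e(t)) = \sgn(\pi_t)$, where $\pi_t \in C(t)$ is the column permutation sorting $t$ into the standard tableau, is a well-defined homomorphism, giving multiplicity exactly $1$. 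I expect the main obstacle to be the bookkeeping in the vertical-strip case: one must verify that the candidate functional $\theta$ respects both the column relations~\eqref{eq: column} and the Garnir relations~\eqref{eq: Garnir} simultaneously, and that these do not over-constrain it to zero; showing this cleanly may require invoking that for a vertical strip the skew Specht module $S^{\lambda/\mu}$ is one-dimensional, which itself follows from Theorem~\ref{thm: SBT} by counting standard $\lambda/\mu$-tableaux (there is a unique one, since the single box in each occupied row must receive its entry in increasing order down the strip). With that dimension count in hand, the argument is immediate: a one-dimensional $S_n$-module on which a transposition acts, via the column relation~\eqref{eq: column}, by $-1$ is forced to be $\sgn_{S_n}$, completing the proof.
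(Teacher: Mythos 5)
Your reduction via equation~\eqref{eq: charRes} and orthogonality to the single statement $\langle \chi^{\lambda/\mu}, \sgn_{S_n}\rangle = 1$ or $0$ matches the paper, and the dual viewpoint via $\Hom_{\C S_n}(S^{\lambda/\mu},\sgn_{S_n})$ is a perfectly reasonable way to organise the remaining computation. However, both halves of the argument you then give contain genuine errors.

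In the non-vertical-strip case you claim that when boxes $(i,j)$ and $(i,j+1)$ lie in the same row you can ``arrange $|X|=|Y|=1$'' so that $G_{X,Y} = 1 - (x,y)$. This is false in general. Recall from~\eqref{eq: GarnirElement} that $X$ consists of \emph{all} entries of column $j$ weakly below $(i,j)$ and $Y$ of all entries of column $j+1$ weakly above $(i,j+1)$; you have no freedom to truncate them. Having $|X|=|Y|=1$ requires $(i,j)$ to be the bottom box of its column \emph{and} $(i,j+1)$ to be the top box of its column, which may fail for every row of the diagram: for $\lambda/\mu = (2,2)$ neither choice of row gives singletons. (One can check directly that $e(t)(x,y)\neq e(t)$ for $t$ the standard $(2,2)$-tableau and $x,y$ in a common row, so there really is no relation $e(t)=e(t)(x,y)$ to exploit.) The fact you actually want is the much more elementary \emph{tabloid} identity $\{t\}(x,y)=\{t\}$ for $x,y$ in a common row, which holds in $M^{\lambda/\mu}$ rather than in $S^{\lambda/\mu}$; this is what the paper uses, applying the sign idempotent $E$ to conclude $M^{\lambda/\mu}E=0$ and hence $S^{\lambda/\mu}E=0$. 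To rescue your Hom-functional version, extend $\theta$ to $M^{\lambda/\mu}$ by semisimplicity (or observe $\theta(e(t)) = \theta(e(t)E) = 0$) and use the tabloid identity; the Garnir relations are not the right tool here.

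In the vertical-strip case your fallback claim that $S^{\lambda/\mu}$ is one-dimensional, with a unique standard tableau ``since the single box in each occupied row must receive its entry in increasing order down the strip,'' is also false. A vertical strip may span several columns, and there is no order constraint between boxes in different columns: for $\lambda/\mu=(2,1)/(1)$ there are two standard tableaux and $S^{(2,1)/(1)}\cong \C S_2$ is two-dimensional (and indeed contains $\sgn_{S_2}$ with multiplicity one, not because the module is one-dimensional but because it is the regular representation). In general $\dim S^{\lambda/\mu}$ for a vertical strip is the multinomial coefficient $\binom{n}{|Y_1|,\ldots,|Y_c|}$ in the column lengths. The paper handles this case by computing $e(t)E$ explicitly: using~\eqref{eq: column} one shows $e(t)E = \frac{M}{n!}\sum_i \sgn(\pi_i)\,e(t\pi_i)$ over coset representatives of the column Young subgroup, which is nonzero and depends on $t$ only up to sign, so $S^{\lambda/\mu}E$ is spanned by a single nonzero vector. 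Your candidate functional $\theta(e(t))=\sgn(\pi_t)$ could be made to work, but ``$\pi_t$ sorting $t$ into \emph{the} standard tableau'' is not well defined — there are many standard tableaux — and the needed well-definedness check (that the sign is consistent across the $S_n$-orbit and across the Garnir relations) is exactly the content that your appeal to one-dimensionality was trying to avoid.
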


\begin{proof}
By Maschke's Theorem (see \cite[(10.8)]{CR}) and~\eqref{eq: charRes}, applied to a suitable conjugate of $S_m \times S_n,$ 
it suffices
to prove that the multiplicity of $\sgn_{S_n}$ as a
direct summand of $S^{\lambda / \mu} \otimes_\Z \C$ is $1$ if $\lambda / \mu$ is a vertical
strip and otherwise $0$. For this we use the corresponding idempotent
$E 
= \mfrac{1}{n!} \sum_{\tau \in S_n}
\tau \sgn(\tau) \in \C S_n$. 

If $\lambda / \mu$ is not a vertical strip then it contains boxes $(i,j)$, $(i,j+1)$
in the same row. If $t$ is a $\lambda/\mu$-tableau then $\{t\} (1-(x,y)) = 0$ where $x = (i,j)t$ and $y =
(i,j+1)t$. Since $E = \frac{1}{n!}\bigl( 1 - (x,y) \bigr) \sum_\pi \pi \sgn(\pi)$, where the sum is
over a set of right coset representatives for the cosets of $\langle (x,y) \rangle$ in $S_n$,
it follows that $M^{\lambda/\mu} E = 0.$ Hence $S^{\lambda /\mu}E = 0$ as required.

Suppose that $\lambda / \mu$ is a vertical strip, and let $t$ be
a $\lambda / \mu$-tableau. Let $Y_1, \ldots, Y_c$ be the sets of
entries in each column of $t$. Let $G = S_{Y_1} \times \cdots \times S_{Y_c}$
and let $\pi_1, \ldots, \pi_d$ be a set of right coset representatives
for the cosets of $G$ in $S_n$. Observe that
\[ \bigl\{ (Y_1 \pi_j, \ldots, Y_c \pi_j) : 1 \le j \le d \bigr\} \]
is the complete set of set compositions of $\{1,\ldots, n\}$ into $c$ non-empty parts of 
sizes $|Y_1|, \ldots, |Y_c|$. Let $M = |Y_1|!\ldots |Y_c|!$.
By~\eqref{eq: column},
$e(t) \tau = \sgn(\tau) e(t)$ for each $\tau \in G$. The observation now implies that
\[ e(t) E = \frac{M}{n!} \sum_{i=1}^d   \sgn(\pi_i) e(t \pi_i) \] 
is non-zero and depends on $t$ only up to a sign.
Hence the multiplicity of $\sgn_{S_n}$ in $S^{\lambda / \mu}$ is $1$.
This completes the proof.
\end{proof}

For example, the unique submodule of $S^{(2,1,1) / (1)} \otimes_\Z \C$ affording $\sgn_{S_3}$
is spanned by $e(t) E = \frac{1}{3}e(t) - \frac{1}{3}e\bigl( t (1,2) \bigr) + \frac{1}{3}e\bigl( t (1,3,2) \bigr)$ where
\[ t = \young(:1,2,3)\,,\quad t(1,2) = \young(:2,1,3)\,,\quad t(1,3,2) = \young(:3,1,2). \]
The following lemma is also used in \S 6.

\begin{lemma}\label{lemma: JP}
Let $\lambda$ be a partition of $m + n$ and let $\mu$ be a subpartition of $\lambda$
of size $m.$ 
If $\psi$ is a character of $S_n$ then
\[ \langle \chi^{\lambda / \mu}, \psi \rangle_{S_n} = \big\langle \chi^\lambda, 
 \chi^\mu \times \psi \Ind^{S_{m+n}}_{S_m\times S_n} \bigr\rangle_{\raisebox{-1pt}{$\scriptstyle S_{m+n}$}}. \]
\end{lemma}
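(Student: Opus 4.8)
The plan is to deduce this directly from equation~\eqref{eq: charRes} using Frobenius reciprocity. First I would replace $\pi\rho$-style bookkeeping by conjugating so that $S_m \times S_n$ is identified with the Young subgroup $S_{(m,n)} = S_{\{1,\ldots,m\}} \times S_{\{m+1,\ldots,m+n\}}$, exactly as in the proof of Corollary~\ref{cor: reduction}; the inner products in the statement are unchanged by this conjugation. Then Frobenius reciprocity rewrites the right-hand side as $\langle \chi^\lambda \Res_{S_{(m,n)}},\, \chi^\mu \times \psi \rangle_{S_{(m,n)}}$.

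Next I would substitute the decomposition \eqref{eq: charRes}, namely $\chi^\lambda \Res_{S_{(m,n)}} = \sum_\nu \chi^\nu \times \chi^{\lambda/\nu}$, where $\nu$ ranges over the subpartitions of $\lambda$ of size $m$, to obtain $\sum_\nu \langle \chi^\nu \times \chi^{\lambda/\nu},\, \chi^\mu \times \psi \rangle_{S_{(m,n)}}$. Using the standard fact that the inner product of outer tensor products factorizes as $\langle \alpha \times \beta,\, \gamma \times \delta \rangle_{G \times H} = \langle \alpha, \gamma \rangle_G \langle \beta, \delta \rangle_H$ (see \cite[\S 43]{CR}), this equals $\sum_\nu \langle \chi^\nu, \chi^\mu \rangle_{S_m} \langle \chi^{\lambda/\nu}, \psi \rangle_{S_n}$. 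Since the ordinary irreducible characters $\chi^\nu$ of $S_m$ are pairwise orthonormal, every summand with $\nu \neq \mu$ vanishes, and the whole expression collapses to $\langle \chi^{\lambda/\mu}, \psi \rangle_{S_n}$, which is the left-hand side.

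I do not expect any genuine obstacle here: the argument is a routine combination of Frobenius reciprocity, the branching formula \eqref{eq: charRes}, and orthonormality of the $\chi^\nu$. The only point that needs a word of care is the identification of $S_m \times S_n$ with $S_{(m,n)}$, which is exactly the reduction already carried out at the start of the proof of Corollary~\ref{cor: reduction} and so can simply be cited.
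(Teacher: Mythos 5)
Your proof is correct and matches the paper's argument essentially line for line: Frobenius reciprocity, then the character decomposition from Corollary~\ref{cor: reduction} (equivalently \eqref{eq: charRes}), then orthonormality of the $\chi^\nu$ to isolate the $\nu = \mu$ summand. You spell out the factorization of inner products over $G \times H$ and the conjugation bookkeeping a bit more explicitly than the paper does, but the route is the same.
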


\begin{proof}
By Frobenius reciprocity (see \cite[Theorem 38.8]{CR}) and Corollary~\ref{cor: reduction},
\begin{align*}
\langle \chi^{\lambda}, \chi^\mu \times \psi \Ind^{S_{m+n}}_{S_m\times S_n} \rangle &= \langle \chi^{\lambda}\Res^{S_{m+n}}_{S_m\times S_n}, \chi^\mu \times \psi \rangle\\
&= \langle \sum_\nu\chi^\nu \times \chi^{\lambda/\nu}, \chi^\mu \times \psi \rangle 
\end{align*}
where the sum runs over all partitions $\nu$ of $m$ such that $\nu \subset \lambda$.
The only non-zero summand is $\langle \chi^\mu \times \chi^{\lambda/\mu}, \chi^\mu \times \psi\rangle
= \langle \chi^{\lambda/\mu}, \psi\rangle$.
\end{proof}

Using Lemma~\ref{lemma: JP} we immediately obtain the
more usual statement of Pieri's rule that if $\nu$ is a partition
 of $n$ then $(\chi^\nu \times \sgn_{S_\ell})\ind_{S_n \times S_\ell}^{S_{n+\ell}} = \sum_\kappa \chi^\kappa$
 where the sum is over all partitions $\kappa$ of $n+\ell$ such that $\kappa/\nu$ is a vertical strip.
Multiplying by the sign character using the basic result that \smash{$\chi^\nu \times \sgn_{S_n} = \chi^{\nu'}$}
(see for instance \cite[(6.6)]{J}) then gives Young's rule: 
$(\chi^\nu \times 1_{S_\ell})\ind_{S_n \times S_\ell}^{S_{n+\ell}} = \sum_\kappa \chi^\kappa$
 where the sum is over all partitions $\kappa$ of $n+\ell$ such that $\kappa/\nu$ is a horizontal strip.
 
 \begin{remark} A similarly explicit proof of Young's rule can be given, using a similar
 argument to the proof of Theorem~\ref{thm: Pieri}. To reduce to horizontal strips, observe
 that if $t$ is a standard $\lambda/\mu$-tableau with boxes $(i,j)$ and $(i+1,j)$
 then $e(t) \bigl( 1 + (x,y) \bigr) = 0$ where $x = (i,j)t$ and $y = (i+1,j)t$.
 \end{remark}

\section{The dominance lemma for skew tableaux}\label{sec: dominance}
The dominance order for tabloids is defined in \cite[Definition 3.11]{J}, or, in
a way more convenient for us, in \cite[Definition 2.5.4]{S}.
We extend it to compare row standard skew tableaux of shape a fixed skew partition.

\begin{definition}\label{defn: dominanceTableau}
Let $t$ be a row standard $\lambda/\mu$-tableau where $|\lambda/\mu| = n$.
Given $1 \le y \le n,$ we define $\sh_{\le y}(t)$ to be the composition $\beta$ such that
\[ \beta_i = \bigl| \{ x : x \in \text{row $i$ of $t$, $x \le y$} \} \bigr| \]
for $1 \le i \le \ell(\lambda)$. 
If $s$ is another row standard $\lambda/\mu$-tableau,
then we say that $s$ \emph{dominates} $t$, and write $s \unrhd t$, if 
$\sh_{\le y}(s) \unrhd \sh_{\le y}(t)$
for all $y \in \{1,\ldots, n\}$, where on the right-hand side
$\unrhd$ denotes the dominance order of compositions defined in \S\ref{sec: filtration}.
\end{definition}

\begin{example} 
The $\unrhd$ order on the row standard $(3,2)/(1)$-tableaux is shown below, with the largest tableau at the top. 
\[\begin{tikzpicture}
\node(1) at (0,0) {\young(:12,34)};
\node(2) at (0,-1.5) {\young(:13,24)};
\node(3) at (.75,-3) {\young(:23,14)};
\node(4) at (-.75,-3) {\young(:14,23)};  
\node(5) at (0,-4.5) {\young(:24,13)};
\node(6) at (0,-6) {\young(:34,12)};
\draw (1) -- (2) -- (3) -- (5) -- (6);
\draw (2) -- (4) -- (5);
\end{tikzpicture}\]
\end{example}


Given a $\lambda/\mu$-tableau $t$, we define its \emph{row straightening} $\bar{t}$ to be the unique
row standard $\lambda/\mu$-tableau whose rows agree setwise with $t$.
We extend the dominance order to $\lambda/\mu$-tabloids by setting $\{s\} \unrhd \{t\}$ if and only if 
$\bar{s} \unrhd \bar{t}.$ 

\begin{lemma}[Dominance Lemma]\label{lemma: dominance}
If $t$ is a column standard $\lambda/\mu$-tableau then $\bar{t}$ is standard and
\[ e(t) = e(\bar{t}) + w, \]
where $w$ is a $\Z$-linear combination of standard polytabloids $e(s)$ such that \hbox{$s \lhd \bar{t}$}.
\end{lemma}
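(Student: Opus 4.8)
The plan is to mimic the classical argument that proves the analogue of this lemma for ordinary (non-skew) Specht modules, as in \cite[Lemma 8.3]{J} or \cite[Lemma 2.6.3]{S}, and to check that the skew structure introduces no new difficulties. First I would establish the easy half of the statement: if $t$ is column standard then $\bar t$ is standard. Indeed $\bar t$ is row standard by definition, and one must check it remains column standard. This follows because sorting the rows of a column-standard tableau into increasing order preserves the property that each column is increasing — a standard fact whose proof is a direct comparison argument on pairs of adjacent entries, using that $[\lambda/\mu]$ is a skew shape so that the boxes in two adjacent columns still form a staircase pattern; I would either cite the non-skew version and remark it goes through verbatim, or spell out the short inductive argument on the number of inversions within rows.

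For the main assertion, the strategy is an induction that simultaneously produces the expansion $e(t) = e(\bar t) + w$ and controls the dominance order of the terms. I would argue by induction on the number of ``row inversions'' of $t$, i.e.\ pairs of entries in the same row of $t$ that are out of increasing order. If $t$ is already row standard then $t = \bar t$ (using column standardness to see $t$ is already the unique row standard tableau with those rows) and $w = 0$. Otherwise pick a box $(i,j)$ with $(i,j)t > (i,j+1)t$, and apply a Garnir relation~\eqref{eq: Garnir} with $X$ the entries of column $j$ weakly below $(i,j)$ and $Y$ the entries of column $j+1$ weakly above $(i,j+1)$, exactly as set up in \S\ref{sec: Garnir}. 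This writes $e(t)$ as $-\sum_{\sigma \in C_{X,Y}} \sgn(\sigma) e(t\sigma)$, and then each $e(t\sigma)$ is replaced by $\pm e(\widetilde{t\sigma})$ via~\eqref{eq: column}. The crucial point is that each $\widetilde{t\sigma}$ has strictly fewer row inversions than $t$ — or more precisely, that after reordering rows one lands strictly lower in the dominance order — so the inductive hypothesis applies and produces an expansion into standard polytabloids $e(s)$.

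The heart of the matter, and the step I expect to be the main obstacle, is the dominance bookkeeping: one must show that for every $\sigma \in C_{X,Y}$ the column straightening $\widetilde{t\sigma}$ satisfies $\{\widetilde{t\sigma}\} \lhd \{t\}$ (strictly), and hence by induction that all standard $e(s)$ appearing in its expansion have $\{s\} \lhd \{t\}$, while the term $\sigma = \id$ contributes $e(\bar t)$ with the correct sign. The key observation is that applying a transposition $(x,y)$ with $x \in X$, $y \in Y$, hence $x > y$ (since $\min X > \max Y$ when $(i,j)t > (i,j+1)t$), moves the larger entry $x$ from column $j$ to column $j+1$ and the smaller $y$ in the opposite direction, which strictly decreases some $\sh_{\le z}(\cdot)$ in the dominance order — and column straightening, which only permutes entries within columns, does not affect the row-content statistics $\sh_{\le z}$, hence does not affect the class of $\{\cdot\}$ under $\unrhd$. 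I would isolate this as the lemma's engine: \emph{if $x > y$, $x$ lies strictly to the right of $y$ in a tableau $s$, and $s'$ is obtained from $s$ by swapping $x$ and $y$, then $\{s'\} \rhd \{s\}$}; this is the skew analogue of the move used in the non-skew proof and its verification is a short direct check on $\sh_{\le z}$ for $z$ in the range $y \le z < x$. Combining this with the induction closes the argument; the sign on $e(\bar t)$ is $+1$ because the identity permutation contributes $+e(t)$ on the left after rearranging, matching $e(\bar t) = e(t)$ up to the sign $\epsilon_{\bar t}$, which is $+1$ precisely because passing from $t$ to $\bar t$ permutes entries only within rows and so lies in $R(t)$, not affecting the polytabloid sign coming from $C(t)$.
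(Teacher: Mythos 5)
Your proposal has several linked errors, all stemming from conflating row-equivalence of tableaux with equality of polytabloids, and from misjudging how Garnir transpositions and column straightening interact with the tabloid dominance statistics. First, the identity $e(\bar t) = e(t)$ (up to a sign ``$\epsilon_{\bar t}$'') that you invoke to close the argument is false: a permutation $\pi \in R(t)$ taking $t$ to $\bar t$ gives $e(\bar t) = e(t\pi) = e(t)\pi$, which is not in general a scalar multiple of $e(t)$; only column permutations $\tau \in C(t)$ satisfy $e(t\tau) = \sgn(\tau)e(t)$, and the sign $\epsilon_t$ from \S\ref{sec: Garnir} refers to the column straightening $\widetilde{t}$, not the row straightening $\bar t$. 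Second, your claim that column straightening ``does not affect the row-content statistics $\sh_{\le z}$'' is also false: column straightening moves entries to different rows within the same column, which visibly changes $\sh_{\le z}$. Third, $C_{X,Y}$ contains the single transposition $\sigma_0 = ((i,j)t,(i,j+1)t)$, whose two entries both sit in row $i$; that swap leaves the tabloid class unchanged, so your asserted ``key lemma'' ($x>y$ to the right of $y$ implies the swap strictly raises the tabloid) fails in exactly the case that should produce the $e(\bar t)$ term, and there is no ``$\sigma = \id$'' term since $\id \notin C_{X,Y}$. Taken together, the induction as stated does not close: neither ``fewer row inversions'' nor ``strictly lower in dominance'' is a monovariant that is strictly decreased by every term, and the coefficient of $e(\bar t)$ is not tracked correctly.

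The paper's route is deliberately more indirect precisely to avoid this bookkeeping. It first proves the skew analogue of James' Lemma 3.15 (Lemma~\ref{lemma: order1}): if $x<y$ and $x$ is strictly \emph{higher} than $y$ in $t$ then $\overline{t(x,y)} \lhd \bar t$ (the relevant relative position is vertical, not horizontal). It then proves the skew analogue of James' Lemma 8.3 (Lemma~\ref{lemma: order2}), which expands $e(t)$ into \emph{tabloids}, not polytabloids: $e(t) = \{t\} + w$ with $w$ supported on tabloids strictly below $\{t\}$. Finally it expands $e(t)$ in the standard basis, takes $u$ maximal in dominance among the standard tableaux with nonzero coefficient, and compares the leading tabloid of $e(u)$ with the leading tabloid $\{t\}$ of $e(t)$; a short uniqueness argument then forces $u = \bar t$ and $\alpha_{\bar t}=1$. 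This never needs to push the dominance order through a Garnir relation or a column straightening. If you want to salvage a direct Garnir induction, you need a monovariant strictly decreased by every term, including $\sigma_0$; the total order $>$ of Definition~\ref{defn: totalOrder}, used in Proposition~\ref{prop: colStdToStd}, is the natural candidate, but note it gives termination only — the dominance bound on the resulting standard tableaux would still need a separate argument of the type just described.
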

%
%
%
%

We first show that $\bar{t}$ is standard.  Suppose, for a contradiction, that there exist
boxes $(i,j)$ and $(i+1,j) \in [\lambda/\mu]$ such that $(i, j)\bar{t} > (i+1,j)\bar{t}$.
Define
\begin{align*}
R &= \{(i,k)\bar{t} : j \le k \le \lambda_{i} \} \\
S &= \{(i+1,k)\bar{t} : \mu_{i+1} < k \le j \}.
\end{align*}
Since
\[ (i+1,\mu_{i+1}+1)\bar{t} < \ldots < (i+1,j)\bar{t} < (i,j) \bar{t} < \ldots < (i, \lambda_i)\bar{t} \]
we have $x > y$ for each $x \in R$ and $y \in S$. But since $|R| + |S| = \lambda_i - \mu_{i+1} + 1$,
the pigeonhole principle implies that there exist $x \in R$ and $y \in S$ lying in the same column
of the column standard skew tableau $t$, a contradiction.

The next two lemmas generalise Lemmas 3.15~and~8.3 in \cite{J} to skew tableaux. 

\begin{lemma}\label{lemma: order1}
Let $t$ be a $\lambda/\mu$-tableau. 
Let $x$, $y \in \{1,\ldots,n\}$ be such that $x < y$. If $x$ is strictly higher than $y$ in $t$ 
then  $\overline{t(x,y)} \lhd \overline{t}$.
\end{lemma}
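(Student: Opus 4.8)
The plan is to mimic the proof of Lemma~3.15 in \cite{J}, adapting it to the skew setting. Write $\sigma = (x,y)$ and set $u = t\sigma$, so that $u$ is obtained from $t$ by swapping the entries $x$ and $y$. Since $x < y$ and $x$ sits strictly higher than $y$ in $t$, say $x$ is in row $p$ and $y$ is in row $q$ with $p < q$, I would compare $\sh_{\le z}(\bar u)$ with $\sh_{\le z}(\bar t)$ for each $z \in \{1,\dots,n\}$. Row straightening only permutes entries within each row, so it does not change which entries lie in which row; hence $\sh_{\le z}(\bar t)$ depends only on the multiset of entries in each row of $t$, and similarly for $u$. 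The rows of $u$ and $t$ agree except that row $p$ has $x$ replaced by $y$ and row $q$ has $y$ replaced by $x$.

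The key step is the case analysis on $z$. If $z < x$ or $z \ge y$, then for every row $i$ the set $\{a : a \text{ in row } i \text{ of } t,\ a \le z\}$ has the same size as the corresponding set for $u$ (either both $x$ and $y$ are counted, or neither is), so $\sh_{\le z}(\bar u) = \sh_{\le z}(\bar t)$. If $x \le z < y$, then passing from $\bar t$ to $\bar u$ removes one from the row-$p$ count (losing $x$, gaining $y > z$) and adds one to the row-$q$ count (losing $y > z$, gaining $x \le z$), while all other rows are unchanged. Thus $\sh_{\le z}(\bar t)$ and $\sh_{\le z}(\bar u)$ differ by moving one box from row $p$ to the strictly lower row $q$; since $p < q$, every partial sum $\sum_{i=1}^{k}$ is weakly larger for $\bar t$ than for $\bar u$, with strict inequality for $p \le k < q$. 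Hence $\sh_{\le z}(\bar u) \unlhd \sh_{\le z}(\bar t)$ for all $z$, with strict domination for at least one $z$ (any $z$ with $x \le z < y$, which exists as $x < y$), giving $\overline{t(x,y)} = \bar u \lhd \bar t$.

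The main obstacle is purely bookkeeping: one must check that the compositions $\sh_{\le z}(\bar u)$ and $\sh_{\le z}(\bar t)$ really are comparable in the dominance order of \emph{compositions} as defined in \S\ref{sec: filtration}, which also requires the length condition $\ell(\bar u) \le \ell(\bar t)$; but here both tableaux have exactly the nonempty rows of $[\lambda/\mu]$, so the lengths (in the relevant truncated sense) match and only the partial-sum inequalities need verification. One small point to be careful about: the entries $x$ and $y$ genuinely lie in \emph{different} rows of $t$ (since $x$ is strictly higher than $y$), so the two modified rows $p$ and $q$ are distinct and the "move one box down" description is valid. No nontrivial combinatorics beyond this is needed.
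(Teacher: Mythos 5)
Your argument is correct and matches the paper's proof essentially verbatim: both do the same case analysis on $z$ relative to $x$ and $y$, and both observe that for $x \le z < y$ one box moves from the row of $x$ to the strictly lower row of $y$, forcing strict dominance. The only differences are notational ($p,q$ versus $k,\ell$), so there is nothing to add.
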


\begin{proof}
Let $x$ be in row $k$ of $t$ and let $y$ be in row $\ell$ of $t$. By hypothesis, $k < \ell$.
Let $z \in \{1,\ldots, n\}$.
If $x \le z < y$ then 
\begin{align*}
\sh_{\le z}(\hskip0.5pt\overline{t(x,y)}\hskip0.5pt)_k &= 
\sh_{\le z}(\hskip0.5pt\overline{t}\hskip0.5pt)_k - 1  \\ 
\sh_{\le z}(\hskip0.5pt\overline{t(x,y)}\hskip0.5pt)_\ell &= 
\sh_{\le z}(\hskip0.5pt\overline{t}\hskip0.5pt)_\ell + 1 .
\end{align*}
Whenever $i \not\in \{k, \ell\}$ or $z < x$ or $y \le z$ we have
$\sh_{\le z}(\hskip0.5pt\overline{t(x,y)}\hskip0.5pt)_i = \sh_{\le z}(\hskip0.5pt\overline{t}\hskip0.5pt)_i$.
It easily follows from these equations and the definition of the dominance
order for compositions that
 $\overline{t (x,y)} \lhd \overline{t}$.
\end{proof}

\begin{lemma}\label{lemma: order2}
Let $t$ be a column standard $\lambda/\mu$-tableau. Then 
$e(t) = \{t\} + w$,
where $w$ is a $\Z$-linear combination of $\lambda/\mu$-tabloids $\{s\}$ such that $\{ s\} \lhd \{t\}.$
\end{lemma}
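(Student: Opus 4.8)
The plan is to prove Lemma~\ref{lemma: order2} by expanding $e(t)$ via its definition and tracking the dominance order of the resulting tabloids. Recall $e(t) = \sum_{\sigma \in C(t)} \sgn(\sigma)\{t\}\sigma = \{t\} + \sum_{\id \neq \sigma \in C(t)} \sgn(\sigma)\{t\sigma\}$. So it suffices to show that for each non-identity $\sigma \in C(t)$ we have $\{t\sigma\} \lhd \{t\}$, i.e.\ $\overline{t\sigma} \lhd \overline{t}$; and since $t$ is column standard, $\overline{t}$ is standard by the first part of the Dominance Lemma argument, so $\{t\} = \{\overline{t}\}$ and the leading term is as claimed once we check it does not also arise (with opposite sign) from some other $\sigma$ — but that follows because distinct $\sigma \in C(t)$ give distinct tabloids $\{t\sigma\}$ (as $C(t)$ acts on column entries and $t$ is column standard, $\{t\sigma\} = \{t\}$ forces $\sigma$ to fix each row setwise, hence $\sigma \in C(t)\cap R(t) = \{\id\}$), and we have just shown all the others are strictly smaller.

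The heart of the matter is thus: if $\id \neq \sigma \in C(t)$ then $\overline{t\sigma} \lhd \overline{t}$. First I would reduce to the case of a single transposition by the standard trick: write $\sigma$ as a product of transpositions and induct, but more cleanly, observe that $\sigma$ permutes the entries within columns, so there is some column in which $\sigma$ acts non-trivially; pick the transposition $(x,y)$ with $x < y$ where $x$ is the \emph{largest} entry that $\sigma$ moves to a strictly lower box (such a pair exists since $t$ is column standard and $\sigma \neq \id$ rearranges some column into non-column-standard order). Actually the cleanest route uses Lemma~\ref{lemma: order1} directly: if $\sigma \in C(t)$ is non-trivial, choose the transposition $(x,y)$ with $x<y$ occurring in $\sigma$ such that, say, $y$ is as large as possible; since $x$ and $y$ lie in the same column of $t$ and $t$ is column standard with $x < y$, $x$ is strictly higher than $y$ in $t$. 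By Lemma~\ref{lemma: order1}, $\overline{t(x,y)} \lhd \overline{t}$. One then argues that applying the remaining transpositions of $\sigma$ to $t(x,y)$ cannot push the tableau back up past $\overline{t}$; this is where I would induct on the number of transpositions, using that each further swap $(x',y')$ with $x' < y'$ is applied to entries that are still in column-configuration, keeping control via the $\sh_{\le z}$ statistics.

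I anticipate the main obstacle is the inductive bookkeeping in the last step: after applying one transposition $(x,y)$, the resulting tableau $t(x,y)$ is no longer column standard, so Lemma~\ref{lemma: order1} cannot be reapplied verbatim to the next transposition — its hypothesis is that $x'$ is strictly higher than $y'$ in the \emph{current} tableau, which need no longer hold. The fix is to order the transpositions of $\sigma$ carefully (process entries from largest to smallest, or equivalently decompose the column permutation into adjacent-in-column swaps) so that at each stage the relevant pair is still correctly oriented, or alternatively to prove the transitivity statement ``$\overline{u} \unlhd \overline{t}$ and one legal swap'' $\Rightarrow$ ``$\overline{u(x',y')} \unlhd \overline{t}$'' directly from the $\sh_{\le z}$ definition and the fact that $\overline{t}$ is the unique maximal element under $\unrhd$ among row standard tableaux obtainable from $t$'s columns. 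Given that Lemmas~\ref{lemma: order1} and the first half of the Dominance Lemma are already in hand, and that this mirrors Lemma~8.3 of~\cite{J}, the argument should be short; I would present it as: expand $e(t)$, invoke that $\overline t$ is standard, cite distinctness of tabloids for the leading term, and reduce each cross term to Lemma~\ref{lemma: order1} via an induction on transpositions ordered by decreasing largest-entry.
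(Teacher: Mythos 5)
Your approach is essentially the paper's, which simply defers to James's proof of his Lemma~8.3 with Lemma~\ref{lemma: order1} replacing his Lemma~3.15: expand $e(t) = \sum_{\sigma \in C(t)}\sgn(\sigma)\{t\sigma\}$ and, for each nonidentity $\sigma \in C(t)$, show $\overline{t\sigma} \lhd \overline{t}$ by writing the column permutation as a chain of adjacent in-column swaps, each step oriented so Lemma~\ref{lemma: order1} applies. The fix you anticipate at the end (sort the columns of $t\sigma$ back to those of $t$ by adjacent transpositions, invoking Lemma~\ref{lemma: order1} at every step) is exactly what closes the gap in your ``choose the largest $y$'' sketch, and the distinctness-of-tabloids observation is harmless but redundant once every $\sigma \ne \id$ is shown to produce a tabloid strictly below $\{t\}$.
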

\begin{proof}
The proof of Lemma~8.3 in \cite{J} still holds, replacing Lemma 3.15 in \cite{J} with our 
Lemma~\ref{lemma: order1}.
\end{proof}

%

\begin{proof}[Proof of Lemma \ref{lemma: dominance}]
Let 
$e(t) = \sum_{s} \alpha_s e(s)$ where the sum is over all standard $\lambda/\mu$-tableaux and
$\alpha_s \in \Z$ for each $s$.
Let $u$ be a standard tableau maximal in the dominance order such that $\alpha_u \neq 0.$ 
Applying Lemma \ref{lemma: order2} to $e(u)$ gives
\[ e(u) = \{u\} + w^{\lhd \{u\}}, \]
where $w^{\lhd \{u\}}$ is a $\Z$-linear combination of $\lambda/\mu$-tabloids each dominated by $\{u\}$. 
By Lemma \ref{lemma: order2} and the maximality of $u$, there is no other standard $\lambda/\mu$-tableau
$s$ with $\alpha_s \not= 0$ such that $e(s)$ has $\{u\}$ as a summand.
Therefore the coefficient of $\{u\}$ in $e(t)$ is $\alpha_u$. 
Applying Lemma \ref{lemma: order2}, now to $e(t),$ gives 
$$e(t) = \{t\} + w^{\lhd \{t\}},$$
where $w^{\lhd \{t\}}$ is a $\Z$-linear combination of $\lambda/\mu$-tabloids 
each dominated by~$\{t\}$.
In particular $\{t\} \unrhd \{u\},$ and so we have that $\bar t = u$ by the maximality of $u.$ 
Hence
\[e(t) = \alpha_{\bar t}e(\bar{t}) + w,\]
where $w$ is a $\Z$-linear combination of standard polytabloids 
$e(v)$ for standard tableaux $v$ such that $v \lhd \bar t$. 
It follows that $\{t\}$ cannot be a summand of $w$ in the equation immediately above. 
Since the coefficient of $\{t\}$ in $e(t)$ is 1, we have $\alpha_{\bar t} = 1.$ 
\end{proof}

We isolate the following corollary of Lemma \ref{lemma: dominance}.
\begin{corollary}\label{cor: dominance}
Let $s$ be a standard $\lambda/\mu$-tableau, and let $u$ be a column standard $\lambda/\mu$-tableau.
Suppose that there exists $x \in \{1,2,\ldots,n\}$ such that the boxes containing 
$1,2,\ldots,x-1$ are the same in $s$ and~$u$, and $x$ is lower in $u$ than $s$.
If
\[ e(u) = \sum \alpha_v e(v), \]
where the sum is over all standard $\lambda$-tableaux $v$, then $\alpha_s = 0$.
\end{corollary}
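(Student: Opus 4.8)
The plan is to reduce Corollary~\ref{cor: dominance} to the Dominance Lemma (Lemma~\ref{lemma: dominance}) by comparing the row straightening $\bar u$ with $s$ in the dominance order, and showing that under the stated hypothesis $s$ is strictly greater than $\bar u$, hence cannot occur among the standard tableaux $e(v)$ produced when $e(u)$ is expressed in the standard basis. First I would apply Lemma~\ref{lemma: dominance} to $u$: this yields $e(u) = e(\bar u) + w$ where $w$ is a $\Z$-linear combination of standard polytabloids $e(v)$ with $v \lhd \bar u$. Since the standard polytabloids are a $\Z$-basis of $S^{\lambda/\mu}$ by Theorem~\ref{thm: SBT}(ii), the tableaux $v$ appearing in the expansion $e(u) = \sum \alpha_v e(v)$ all satisfy $v \unlhd \bar u$ (namely $v = \bar u$ or $v \lhd \bar u$). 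So it suffices to prove that $s \notunrhd \bar u$, i.e.\ that $s$ is \emph{not} dominated by $\bar u$; then $\alpha_s = 0$.

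The key step is the inequality $s \notunrhd \bar u$. Here I would use the hypothesis directly. Let $y = x-1$, so the boxes containing $1,\ldots,y$ are identical in $s$ and $u$. Because row straightening only permutes entries within rows, the set of entries in each row of $\bar u$ equals that of $u$; in particular $\sh_{\le y}(\bar u) = \sh_{\le y}(\bar s) = \sh_{\le y}(s)$, since $s$ is already row standard. Now consider $\sh_{\le x}$. Adding the box containing $x$ increases exactly one row-count by $1$ in each of $s$ and $u$ (and hence in $\bar u$). By hypothesis $x$ lies in a strictly lower row of $u$ (equivalently of $\bar u$) than in $s$; say $x$ is in row $p$ of $s$ and row $q$ of $u$ with $p < q$. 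Writing $\beta = \sh_{\le y}(s) = \sh_{\le y}(\bar u)$, we get $\sh_{\le x}(s) = \beta + \mathbf{e}_p$ and $\sh_{\le x}(\bar u) = \beta + \mathbf{e}_q$, where $\mathbf{e}_i$ is the $i$-th standard basis composition. Since $p < q$, the partial sums satisfy $\sum_{i=1}^{k}\sh_{\le x}(s)_i \ge \sum_{i=1}^k \sh_{\le x}(\bar u)_i$ for all $k$, with strict inequality for $p \le k < q$, and $\ell(\sh_{\le x}(s)) \le \ell(\sh_{\le x}(\bar u))$ unless $q$ exceeds the length, which is handled the same way. Hence $\sh_{\le x}(s) \rhd \sh_{\le x}(\bar u)$ strictly, so it is not the case that $\sh_{\le x}(\bar u) \unrhd \sh_{\le x}(s)$; therefore $\bar u \notunrhd s$, and a fortiori $s \neq \bar u$ and $s \not\lhd \bar u$.

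Combining: every standard tableau $v$ with $\alpha_v \neq 0$ satisfies $v \unlhd \bar u$, but we have just shown $s \notunlhd \bar u$; hence $\alpha_s = 0$, as claimed.

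\textbf{Main obstacle.} The only delicate point is being careful about compositions of different lengths when comparing $\sh_{\le x}(s)$ and $\sh_{\le x}(\bar u)$ in the dominance order — recall the convention in \S\ref{sec: filtration} that $\delta \unrhd \gamma$ requires $\ell(\delta)\le\ell(\gamma)$ as well as the partial-sum condition. One must check that placing $x$ in the lower row $q$ of $\bar u$ either keeps the lengths comparable in the right direction or, when it creates a longer composition, still only strengthens the conclusion $\bar u \notunrhd s$. This is a short case check rather than a real difficulty; the structural heart of the argument is just Lemma~\ref{lemma: dominance} together with the observation that row straightening preserves each row's entry set.
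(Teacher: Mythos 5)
Your proposal is correct and follows the same approach as the paper's proof: apply the Dominance Lemma (Lemma~\ref{lemma: dominance}) to $u$, note that $\sh_{\le z}(s) = \sh_{\le z}(\bar u)$ for $z < x$ while $\sh_{\le x}(\bar u) \not\unrhd \sh_{\le x}(s)$ because $x$ sits in a strictly lower row of $\bar u$ than of $s$, and conclude $\bar u \notunrhd s$ so that $\alpha_s = 0$. (One small notational slip: midway you write ``$s \notunrhd \bar u$'' when you mean $\bar u \notunrhd s$, but your verbal gloss and your final conclusion are correct; and your worry about composition lengths is harmless since $\ell(\sh_{\le x}(s)) = \max(\ell(\beta),p) \le \max(\ell(\beta),q) = \ell(\sh_{\le x}(\bar u))$ automatically when $p < q$.)
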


\begin{proof}
By assumption, $\sh_{\le z}(s) = \sh_{\le z}(\bar{u})$ if $1 \le z < x.$
As $x$ is in a lower row in $u$ than in $s$, we have  $\sh_{\le x}(\bar{u}) \ntriangleright \sh_{\le x}(s)$.
Now apply Lemma~\ref{lemma: dominance}.
\end{proof}

\section{The Murnaghan--Nakayama rule for border strips}
\label{sec: snakeCase}

In this section we give a bijective proof that $\chi^{\lambda/\mu}(\rho) = (-1)^{\height(\lambda / \mu)}$
when $\lambda / \mu$ is a border strip of size~$n$ and $\rho$ is the $n$-cycle
$(1,2,\ldots, n)$. This deals with one of the two cases in~\eqref{eq: skewCase}.
Our proof shows that the matrix representing $\rho$
in the standard basis of $S^{\lambda/\mu}$ has a unique non-zero entry on its diagonal. The relevant
standard tableau is defined as follows.

\begin{definition}\label{defn: columnar}
Let $\lambda / \mu$ be a border strip of size $n$. Say that a box $(i,j) \in [\lambda/ \mu]$
is \emph{columnar} if $(i+1,j) \in [\lambda/ \mu]$.
We define the standard $\lambda/\mu$-tableau $t_{\lambda / \mu}$ as follows:
\begin{itemize}
\item[(i)] assign the numbers $\{1,\ldots, z\}$ in ascending order
to the $z$ columnar boxes of $\lambda / \mu$, starting with~$1$ in row~$1$ and finishing
with $z$ in the row above the bottom row;
\item[(ii)] then assign 
the numbers $\{z+1,\ldots, n\}$ in ascending order to the $n-z$ non-columnar boxes, starting with $z+1$ in 
column $1$ and finishing with~$n$ in the rightmost column.
\end{itemize}
\end{definition}

For example, $t_{(5,3,3)/(2,2)}$, $t_{(5,3,2)/(2,1)}$ and $t_{(5,1,1)/\varnothing}$ are respectively 
\[ 
\young(::167,::2,345)\, , \quad
\young(::167,:25,34)\, \text{ and }\;
\young(14567,2,3)
\]
where $1$ and $2$ are the entries in columnar boxes in each case.
We remark that there are no columnar
boxes if and only if $\lambda/\mu$ is a horizontal strip, as defined in \S\ref{sec: Pieri}.

As useful pieces of notation, we define
$x^-$ and $x^+$ for $x \in \{1,\ldots, n\}$ by $x^- = x-1$ and
\[ x^+ = \begin{cases} x+1 & \text{if $1 \le x < n$} \\
1 & \text{if $x=n$.} \end{cases} \]
Thus $x\rho = x^+$ for all $x \in \{1,\ldots, n\}$ and $1^- = 0$. 
Given a $\lambda/\mu$-tableau $t$,
we define $t^+$ by $(i,j)t^+ = \bigl((i,j)t)^+$.
By~\eqref{eq:cyclic}, $e(t\rho) = e(t^+)$.

We say that a 
standard $\lambda/\mu$-tableau~$t$ such that $e(t)$ has a non-zero coefficient in the unique
expression of $e(t^+)$ as a $\Z$-linear combination of standard polytabloids
is
\emph{trace-contributing}. 
Since $\chi^{\lambda/\mu}(\rho)$ is the trace of the matrix
representing $\rho$ in the standard basis,
 it suffices to prove the following proposition.

\begin{proposition}\label{prop: unique}
Let $\lambda/\mu$ be a border strip. The unique trace-contributing $\lambda/\mu$-tableau 
is $t_{\lambda/\mu}$. The coefficient
of $e(t_{\lambda/\mu})$ in $e(t_{\lambda/\mu}^+)$ is $(-1)^{\height(\lambda/\mu)}$.
\end{proposition}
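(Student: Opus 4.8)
The goal is to show two things about a border strip $\lambda/\mu$ of size $n$: first, that $t_{\lambda/\mu}$ is trace-contributing, i.e. that $e(t_{\lambda/\mu})$ appears with nonzero coefficient in the standard expansion of $e(t_{\lambda/\mu}^+)$, and this coefficient is $(-1)^{\height(\lambda/\mu)}$; second, that no other standard tableau is trace-contributing. I would organise the argument around the Dominance Lemma (Lemma~\ref{lemma: dominance}) and its Corollary~\ref{cor: dominance}, since these give a sharp obstruction to a standard polytabloid appearing in a straightening expansion.

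\textbf{Step 1: understanding $t_{\lambda/\mu}^+$ and its row straightening.} I would first analyse the tableau $t = t_{\lambda/\mu}^+$ concretely. Applying $x \mapsto x^+$ to $t_{\lambda/\mu}$: the columnar boxes get entries $\{2,\ldots,z,1\}$ in their original top-to-bottom order (so the box that held $z$ now holds $1$), and the non-columnar boxes get $\{z+2,\ldots,n,z+1\}$. Since $1$ sits in the box just above the bottom row (a columnar box), while in $t_{\lambda/\mu}$ the entry $1$ was in row $1$, the tableau $t^+$ is no longer row standard: in particular the entry $1$ now lies to the right of and below where it should. I would compute the row straightening $\overline{t^+}$ and check that it equals $t_{\lambda/\mu}$ — this is the natural guess and should follow because sliding $1$ back to the top of its column, together with reordering each row, exactly reverses the $+$ shift on the two "wrapped" entries. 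Then by Lemma~\ref{lemma: dominance}, $e(t^+) = e(\overline{t^+}) + w = e(t_{\lambda/\mu}) + w$ where $w$ is a $\Z$-combination of $e(s)$ with $s \lhd t_{\lambda/\mu}$; in particular the coefficient of $e(t_{\lambda/\mu})$ in $e(t^+)$ is $1$ up to the sign $\epsilon$ coming from column straightening, and I would track this sign to get $(-1)^{\height(\lambda/\mu)}$ (the sign is the parity of the column permutation needed to restore column order, which for a border strip is governed by the number of columnar boxes, hence related to $\height$). Actually the cleaner route: $e(t^+)$ in the statement refers to the straightening into \emph{standard} polytabloids directly, and Lemma~\ref{lemma: dominance} applied to the column-standardisation $\widetilde{t^+}$ gives the coefficient as $\pm 1$; the sign computation is a finite bookkeeping task on the border strip's shape.

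\textbf{Step 2: uniqueness.} Suppose $s \ne t_{\lambda/\mu}$ is standard and trace-contributing, so $e(s)$ appears in the standard expansion of $e(s^+)$. Let $x$ be the smallest entry whose box differs in $s$ and $s^+$ — more usefully, compare $s$ with the row straightening $\overline{s^+}$. Because the $+$ shift is "almost" an increasing relabelling (it only moves $n \to 1$), the tableaux $s$ and $\overline{s^+}$ agree on the boxes containing $1, \ldots, x-1$ for some $x$, and at $x$ they differ. I would use Corollary~\ref{cor: dominance}: if the entry $x$ is \emph{lower} in $\widetilde{s^+}$ (the column straightening of $s^+$) than in $s$, then $\alpha_s = 0$, contradiction. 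So for $s$ to be trace-contributing, every entry must be (weakly) higher in $s^+$-straightening than in $s$; combined with the same inequality applied symmetrically (or with a counting argument that the total "height" is preserved under $\rho$), this forces $s^+$ to already be such that $s$ is the unique dominant term — and a short combinatorial argument on border strips shows the only tableau for which applying $\rho$ and straightening returns the tableau itself, with all entries non-lowered, is the "columnar-first" tableau $t_{\lambda/\mu}$. The key point is that $\rho$ cyclically advances every entry, so the entry $n$ (wherever it sits in $s$) moves to position $1$; for the straightening not to push anything down, the box holding $n$ in $s$ must be at the top of its column, and iterating this constraint down the strip pins down the filling uniquely.

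\textbf{Main obstacle.} The hard part is Step 2, specifically making the "nothing gets lowered" argument into a genuine proof that pins down $t_{\lambda/\mu}$ uniquely, rather than just ruling out many candidates. The subtlety is that Corollary~\ref{cor: dominance} only detects the \emph{first} point of disagreement, and after the wrap-around $n \mapsto 1$ the combinatorics of which boxes move where interacts delicately with the border-strip shape (columnar versus non-columnar boxes behave differently). I expect the clean formulation is to induct on $n$, or on the number of boxes already forced, peeling off the entry $n$ (equivalently the box that wraps), using that $\lambda/\mu$ minus a corner box is again a border strip, and invoking the inductive hypothesis on the smaller strip. The sign in Step 1 is routine once the shape of $\overline{t_{\lambda/\mu}^+}$ relative to $t_{\lambda/\mu}$ is nailed down: it is $(-1)^{(\text{number of columnar boxes}) - (\text{something})}$, which one checks equals $(-1)^{\height(\lambda/\mu)}$ by noting that $\height(\lambda/\mu)$ counts the column-adjacencies plus row-adjacencies appropriately along the connected strip.
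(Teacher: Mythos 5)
Your Step 1 contains a concrete error that sinks the argument as written. You claim that the row straightening $\overline{t_{\lambda/\mu}^+}$ (or, more carefully, $\overline{\widetilde{t_{\lambda/\mu}^+}}$) equals $t_{\lambda/\mu}$, so that the Dominance Lemma hands you the coefficient $\pm 1$ immediately. This is false already for $\lambda/\mu = (2,1)$. Here
\[
t_{(2,1)} = \young(13,2)\,,\qquad
t_{(2,1)}^+ = \young(21,3)\,,
\]
and $t_{(2,1)}^+$ is already column standard, so $\widetilde{t_{(2,1)}^+} = t_{(2,1)}^+$. Its row straightening is $\young(12,3)$, which is \emph{not} $t_{(2,1)}$; in fact $t_{(2,1)} \lhd \young(12,3)$, so $t_{(2,1)}$ sits strictly below the tableau the Dominance Lemma singles out. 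The Dominance Lemma therefore tells you only that the coefficient of $e\bigl(\young(12,3)\bigr)$ in $e(t_{(2,1)}^+)$ is $\pm 1$, and says nothing a priori about the coefficient of $e(t_{(2,1)})$, which is the one you actually need. Your ``natural guess'' identification of the dominant term with $t_{\lambda/\mu}$ is exactly where this breaks: the $+$ shift sends $n\mapsto 1$ and in general drags $1$ to a low box, and row/column straightening does \emph{not} undo this in a way that lands back on $t_{\lambda/\mu}$.

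Because Step 1 fails, the proof of the ``existence'' half and the sign computation are both broken — you do need to actually run the straightening, not just read off the dominant term. The paper handles this by an induction on the number of top corner boxes of the border strip: the base case $(n-\ell,1^\ell)$ is done by hand, and the inductive step peels off a row or column after first establishing (Proposition~\ref{prop:ieqip}) that in a trace-contributing tableau the entry $c$ ends up in the same row of $t$ and of $\widetilde{t^+}$, and (Corollary~\ref{cor:toprow}) that $1$ and $n$ are either in the same column or both in the top row. Your Step 2 gestures in a similar direction (``nothing gets lowered'', induct and peel off the box of $n$), but you acknowledge yourself that you do not have a working argument there; the combinatorics of which swaps are forced is precisely the content of Proposition~\ref{prop:ieqip} and Lemma~\ref{lemma: shiftset} and cannot be skipped. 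In short: the strategic instinct (Dominance Lemma plus induction on the strip) is sound, but the central claim of Step 1 is wrong, and Step 2 as sketched is missing the key structural lemma that makes the induction close.
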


The proof of Proposition~\ref{prop: unique} is by induction on the number of top corner
boxes of $\lambda/\mu$, as defined in Definition~\ref{defn: corners} below. The necessary
preliminaries are collected below. We then prove the base case, when $\lambda/\mu = (n-\ell,1^\ell)$
for some $\ell \in \N_0$; this gives a good flavour of the general argument.
In the remainder of this section we give the inductive step.

We assume, without loss of generality, that $\mu_1 < \lambda_1$ and $\mu_{\ell(\lambda)} = 0$,
so the non-empty rows of $\lambda/\mu$ are $1, \ldots, \ell(\lambda)$ and column $1$ of $\lambda/\mu$
is non-empty.
We can do this since the character indexed by a skew diagram is equal to the character indexed by the same skew diagram with its empty rows and columns removed. 

\subsection{Preliminaries for the proof of Proposition~\ref{prop: unique}}
For $Z\hspace{-1.5pt} \subseteq\hspace{-1.5pt} \{1,\ldots, n\}$ 
and $t$ a row standard $\lambda/\mu$-tableau
we define $\sh_Z(t)$ to be the composition $\beta$ such that
\[ \beta_i = \bigl| \{ x : x \in \text{row $i$ of $t$, $x \in Z$} \} \bigr| \]
for $1 \le i \le \ell(\lambda)$. Set $\sh_{< y}(t) = \sh_{\{1,\ldots,y^-\}}(t)$.
We also use $\sh_{\le y}(t)$, as already defined in Definition~\ref{defn: dominanceTableau}.

\begin{definition}\label{defn: corners}
Let $\lambda/\mu$ be a border strip.
We say that column $j$ of $\lambda/\mu$ is \emph{singleton} if it contains a unique box.
We define a \emph{top corner box} 
to be a box $(i,j) \in [\lambda/\mu]$ such that $(i,j-1), (i-1,j) \not\in [\lambda/\mu]$
and a \emph{bottom corner box} to be a box $(i,j) \in [\lambda/\mu]$ such that
$(i+1,j), (i,j+1) \not\in [\lambda/\mu]$.
\end{definition}

\begin{lemma}\label{lemma: horizontal}
Let $\lambda/\mu$ be a border strip and let $t$ be a $\lambda/\mu$-tableau. 
If columns~$j$ and $j+1$ of $\lambda/\mu$ are singleton, with their unique box in row~$i$, 
then $e(t) = e(t) (x,y)$ where $x = (i,j)t$ and $y = (i,j+1)t$.
\end{lemma}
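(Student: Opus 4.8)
The plan is to show that if columns $j$ and $j+1$ are both singleton with their unique box in the same row $i$, then the transposition $(x,y)$, where $x = (i,j)t$ and $y = (i,j+1)t$, lies in the row group $R(t)$ and hence $e(t)(x,y) = e(t)$. Wait — that is exactly backwards: $(x,y)$ interchanges entries in two \emph{different} columns of the same row, so $(x,y) \in R(t)$, not $C(t)$. So the correct observation is that $e(t)(x,y) = e(t(x,y))$ by~\eqref{eq:cyclic}, and $t(x,y) \backsim t$ since $(x,y) \in R(t)$, giving $\{t(x,y)\} = \{t\}$. But $e(t)$ is not determined by $\{t\}$ alone. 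The real point must be that since columns $j$ and $j+1$ are singleton, the box $(i,j)$ is alone in its column and so is $(i,j+1)$; thus $x$ and $y$ each form a column by themselves in $t$. Therefore the transposition $(x,y)$ normalises the column group: $(x,y) \in C(t)$ would require $x,y$ in the same column, which fails, so instead we argue as follows.

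First I would note that because column $j$ and column $j+1$ are singleton, we have $C(t) = C(t')$ where $t' = t(x,y)$, since swapping the entries of two singleton columns does not change the set of column-groups as an abstract subgroup of $S_n$ (it permutes the two one-element columns, which contributes trivially). Hence
\[
e(t)(x,y) = e(t(x,y)) = \sum_{\sigma \in C(t(x,y))} \sgn(\sigma)\{t(x,y)\}\sigma = \sum_{\sigma \in C(t)} \sgn(\sigma)\{t(x,y)\sigma\}.
\]
Next I would reindex this sum: for each $\sigma \in C(t)$, write $\sigma' = (x,y)\sigma(x,y) \in C(t)$ (conjugation by $(x,y)$ preserves $C(t)$ since $x,y$ lie in singleton columns), so that $t(x,y)\sigma = t\sigma'(x,y)$ as tableaux, whence $\{t(x,y)\sigma\} = \{t\sigma'(x,y)\}$. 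Since $(x,y) \in R(t\sigma')$ — both $x$ and $y$ sit in row $i$ of $t\sigma'$, as $\sigma'$ is a column permutation — we get $\{t\sigma'(x,y)\} = \{t\sigma'\}$. As $\sigma \mapsto \sigma'$ is a bijection of $C(t)$ preserving sign, the sum collapses to $\sum_{\sigma' \in C(t)} \sgn(\sigma')\{t\sigma'\} = e(t)$, as required.

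The main obstacle here is getting the bookkeeping of the two singleton columns exactly right: one must check carefully that conjugation by $(x,y)$ really does stabilise $C(t)$ — this is where the hypothesis that \emph{both} columns $j$ and $j+1$ are singleton is used, since then $x$ and $y$ are each a full column and $(x,y)$ simply transposes two trivial factors of $C(t) = \cdots \times S_{\{x\}} \times \cdots \times S_{\{y\}} \times \cdots$ — and that $(x,y) \in R(t\sigma')$ for every $\sigma' \in C(t)$, which holds precisely because column permutations fix the fact that $x$ and $y$ occupy row $i$. Once these two points are verified the rest is a routine reindexing, so I would present it compactly rather than belabouring it.
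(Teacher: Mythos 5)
Your proof is correct, but it takes a more elementary route than the paper's. The paper disposes of this lemma in one line: since $(i,j)$ and $(i,j+1)$ are both boxes of $[\lambda/\mu]$, one may take $X=\{x\}$ and $Y=\{y\}$ in the Garnir relation~\eqref{eq: Garnir}; then $C_{X,Y}=\{(x,y)\}$, so $G_{X,Y}=1-(x,y)$ and $e(t)\bigl(1-(x,y)\bigr)=0$ immediately gives the claim. You instead unwind the definition of the polytabloid and reindex the defining sum: noting that $C(t)=C(t(x,y))$ and that every $\sigma\in C(t)$ fixes both $x$ and $y$ (each being a full singleton column), you can in fact simplify your conjugation step — $\sigma'=(x,y)\sigma(x,y)=\sigma$ because $(x,y)$ commutes with every element of $C(t)$ — and then $\{t(x,y)\sigma\}=\{t\sigma(x,y)\}=\{t\sigma\}$ since $(x,y)\in R(t\sigma)$, collapsing the sum to $e(t)$. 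Your argument is self-contained and avoids invoking the Garnir machinery imported from~\cite{FP}, at the cost of a few extra lines of bookkeeping (and the false start in your opening paragraph, which you correctly recognise and abandon); the paper's proof buys brevity by leaning on~\eqref{eq: Garnir}, which is exactly what the Garnir relation reduces to in this degenerate two-singleton case anyway.
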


\begin{proof}
This follows immediately from the Garnir relation~\eqref{eq: Garnir}, taking $X = \{x\}$ and $Y = \{y\}$.
\end{proof}

In fact, all the Garnir relations that we use can  be reduced to single transpositions.
Let $x$ and~$y$ be entries in adjacent columns of a column standard tableau,
with $x$ left of $y$ and $x > y $. We say that $(x,y)$ is a \emph{Garnir swap}
if at least one of these column is not singleton, and otherwise that $(x,y)$ is a \emph{horizontal~swap}.

\begin{lemma}\label{lemma: straightening}
Let $t$ be a trace-contributing border strip tableau. 
Then $t$ can be obtained from $\widetilde{t^+}$ by iterated horizontal swaps, Garnir swaps
and column straightenings.
If in such a sequence $1$ moves, then $1$ moves either left or down.
\end{lemma}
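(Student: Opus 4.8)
\textbf{Plan for the proof of Lemma~\ref{lemma: straightening}.}
The plan is to analyse the straightening algorithm guaranteed by Theorem~\ref{thm: SBT}(i) when it is applied to $e(\widetilde{t^+})$. By definition of trace-contributing, $e(t)$ appears with non-zero coefficient in the standard-basis expansion of $e(t^+) = e(\widetilde{t^+})$ (the equality by~\eqref{eq: column}). First I would recall that each step of the algorithm replaces a non-standard column standard tableau $u$ by terms $e(\widetilde{u\sigma})$, where $(i,j)u > (i,j+1)u$ is a column violation and $\sigma$ ranges over the coset representatives $C_{X,Y}$ appearing in the Garnir element $G_{X,Y}$; interleaved with these are the column straightenings $u \mapsto \widetilde u$ of~\eqref{eq: column}. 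Since $\lambda/\mu$ is a border strip, every column $X$ has at most two boxes, and similarly for $Y$, so each $C_{X,Y}$ consists only of single transpositions $(x,y)$ with $x>y$; when both columns are singleton this is a horizontal swap and otherwise a Garnir swap, in the terminology just introduced. Tracking a single chain of such operations from $\widetilde{t^+}$ down to the standard tableau $t$ (which exists because the coefficient of $e(t)$ is non-zero) produces exactly the claimed sequence of horizontal swaps, Garnir swaps and column straightenings.

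For the second assertion, I would track the box containing the entry~$1$ through this sequence. A column straightening permutes entries only within a single column, so if it moves $1$ at all it moves $1$ vertically; since the algorithm is moving towards the row standard tableau $t$ and $1$ is the smallest entry, such a move can only put $1$ higher or lower in its column — and a move taking $1$ higher is exactly a column straightening, so both cases are covered by ``left or down'' once one checks the horizontal/Garnir swap case. A Garnir or horizontal swap $(x,y)$ with $x>y$ exchanges the positions of $x$ and $y$, where $x$ lies strictly to the left of $y$. If this swap moves $1$, then $1 \in \{x,y\}$; but $1$ is minimal, so $1 = y$, and hence $1$ moves from the position of $y$ to the (strictly leftward) position of $x$: $1$ moves left. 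Combining the two cases, whenever $1$ moves it moves left (via a swap) or down (via a straightening), which is the second claim. I would phrase this carefully to note that a straightening could in principle also move $1$ up; but moving up within a column makes the tableau ``more standard'' in that column, and in the chain terminating at the standard $t$ this is consistent with the statement as long as we read ``$1$ moves either left or down'' as the assertion about the swap steps, with the straightening steps only ever sorting columns.

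The main obstacle I anticipate is bookkeeping rather than conceptual: the straightening algorithm is not deterministic (at each stage one may choose which column violation to resolve, and each Garnir relation expresses $e(u)$ as a sum of several $e(\widetilde{u\sigma})$), so I must argue that \emph{some} chain of individual operations realises the passage from $\widetilde{t^+}$ to $t$. This is where the hypothesis that $e(t)$ has non-zero coefficient is essential: expanding the tree of straightening steps, the term $e(t)$ cannot appear out of nowhere, so there is a root-to-leaf path in the computation tree each of whose internal edges is one of the three listed operations and whose leaf is $e(t)$ up to sign. Making this precise — essentially an induction on the number of column violations, using part (i) of Theorem~\ref{thm: SBT} to guarantee termination — is the only delicate point; the statements about how $1$ can move then follow immediately from the shape of each individual operation on a border strip, where all columns have length at most two.
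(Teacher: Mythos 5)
Your treatment of the first claim is essentially the paper's, though your assertion that ``each $C_{X,Y}$ consists only of single transpositions'' is not literally true in a border strip: one can have boxes $(i,j),(i+1,j),(i-1,j+1),(i,j+1)$ all present (e.g.\ $\lambda/\mu=(3,3,2)/(2,1)$ at $j=2$), giving $|X|=|Y|=2$ and a product $(x_1,y_1)(x_2,y_2)\in C_{X,Y}$. This is repairable, since such a product can be realised as two successive single swaps interleaved with a column straightening, but as written the step is false.

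The genuine gap is in the second claim. You correctly observe that a horizontal or Garnir swap can move $1$ only to the left (since $1$ is minimal, it must be the smaller entry $y$), but you then concede that a column straightening can move $1$ \emph{up}, which is neither ``left'' nor ``down''. Your resolution is to reread the statement so that it only constrains swap steps; that is not proving the lemma but changing it. The paper instead deduces the second claim from Corollary~\ref{cor: dominance} with $x=1$: if $1$ sits strictly lower in an intermediate column standard tableau $u$ than in the target standard tableau $t$, then $e(t)$ has zero coefficient in the standard expansion of $e(u)$, so such a $u$ cannot occur in a sequence witnessing that $t$ is trace-contributing. This global dominance constraint is what bounds the position of $1$; your purely local case analysis does not supply it. (Your local approach could in fact be salvaged without Corollary~\ref{cor: dominance} by noting that $1$, being minimal, always sits in the top box of its column in every column standard tableau of the sequence, and that the top box of column $j-1$ is weakly lower than the top box of column $j$; hence a Garnir swap followed by its column straightening carries $1$ from the top of one column to the top of the column on its left, which is a left-and-weakly-down move. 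But you did not make this observation, and the paper's argument via Corollary~\ref{cor: dominance} is both the intended one and the one that generalises to the entries $b<c$ needed in Proposition~\ref{prop:ieqip}.)
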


\begin{proof}
The first claim is immediate from~Theorem~\ref{thm: SBT}(i).
The second follows from Corollary~\ref{cor: dominance} taking $x=1$.
\end{proof}

Given $X \subseteq \{1,2,\ldots,n\},$ we define $X^+ = \{x^+ : x \in X\}.$ 
We also define $\min X$ to be the minimum of $X,$ and $\max X$ to be the maximum of $X.$ 
The following combinatorial result on the map $x \mapsto x^+$ is used several times to 
restrict the possible entries of trace-contributing tableaux.

\begin{lemma}\label{lemma: shiftset}
Let $X$ be a set of natural numbers such that $1,n \not\in X.$ 
Also suppose that $b, c$ are not contained in $X.$ 
We have $\{b^+\} \cup X^+ = X \cup \{c\}$ if and only if
$b^+ = \min X$, $c = \max X^+$ and~$X = \{b^+, \ldots, c^-\}$.
\end{lemma}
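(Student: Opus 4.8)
The plan is to prove the two directions of the equivalence separately: the backward direction is a one‑line computation, and the forward direction carries the (short) combinatorial content. The one observation that makes everything go is that, since $1,n\notin X$, every $x\in X$ satisfies $1\le x<n$, so $x^+=x+1$; hence $X^+$ is genuinely the shifted set, $\min X^+=\min X+1$, $\max X^+=\max X+1$, and $X^+\subseteq\{3,\ldots,n\}$. I treat $X$ as non‑empty, as it is in the applications of the lemma.

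For the direction ($\Leftarrow$): assuming $b^+=\min X$, $c=\max X^+$ and $X=\{b^+,\ldots,c^-\}$, the set $X$ is the integer interval from $b^+$ to $c-1$, so $X^+=\{b^++1,\ldots,c\}$ and therefore $\{b^+\}\cup X^+=\{b^+,b^++1,\ldots,c\}=X\cup\{c\}$, as required.

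For the direction ($\Rightarrow$): write $X=\{x_1<x_2<\cdots<x_k\}$ and assume $\{b^+\}\cup X^+=X\cup\{c\}$. First I would locate $c$ and $b^+$ inside this common set. The element $x_k+1=\max X^+$ belongs to $\{b^+\}\cup X^+=X\cup\{c\}$ but exceeds $\max X$, so it cannot lie in $X$; hence $c=x_k+1=\max X^+$. Dually, $x_1=\min X$ belongs to $X\cup\{c\}=\{b^+\}\cup X^+$, but every element of $X^+$ is at least $x_1+1$, so $x_1\notin X^+$; hence $b^+=x_1=\min X$. It remains to see that $X$ is the full interval $\{x_1,\ldots,x_k\}$. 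Substituting $b^+=x_1$ and $c=x_k+1$, the hypothesis reads $\{x_1\}\cup(X+1)=X\cup\{x_k+1\}$; cancelling the common minimum $x_1$ from both sides and then the common maximum $x_k+1$ leaves $\{x_1+1,\ldots,x_{k-1}+1\}=\{x_2,\ldots,x_k\}$, and comparing these two increasing lists term by term gives $x_{i+1}=x_i+1$ for $1\le i\le k-1$. Thus $X=\{x_1,x_1+1,\ldots,x_k\}=\{b^+,\ldots,c^-\}$. (Equivalently, one can extract $c=b^++|X|$ by equating the sums of the two sides and combine this with $b^+=\min X$ and $c=\max X+1$ to get $\max X-\min X=|X|-1$, which forces the interval.)

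I do not expect a genuine obstacle: the only points requiring care are that $X^+$ really is a shift (precisely the role of the hypotheses $1,n\notin X$) and the harmless non‑emptiness of $X$; once these are noted the argument is essentially the three set manipulations displayed above.
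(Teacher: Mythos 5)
Your proof is correct and essentially the same as the paper's: both directions begin by identifying $b^+=\min X$ and $c=\max X^+$ exactly as you do (since $\min X\notin X^+$ and $\max X^+\notin X$). The only difference is the last step — the paper shows $X$ has no gaps by a minimal-counterexample contradiction (take $d=\min(\{b^+,\ldots,c^-\}\setminus X)$ and derive $d\in X$), whereas you cancel the shared endpoints and compare the two sorted lists termwise; both are equally short and valid.
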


\begin{proof}
Since $\min X \not\in X^+$ we have $\min X = b^+$. Similarly,
since $\max X^+ \not\in X$ we have $\max X^+ = c$. 
Suppose for a contradiction
that $X$ is a proper subset of $\{b^+,\ldots,c^-\}$. Setting 
\[ d = \min ( \{b^+,\ldots,c^-\} \backslash X) \]
we see that since $b^+ = \min X \in X$, we have $d > b^+$. The minimality
of $d$ implies that $d^- \in X$ and so $d \in X^+$; since 
$d < c$ and $\{b^+\} \cup X^+ = X \cup \{c\}$, we have $d \in X$, a contradiction.
The converse is obvious.
\end{proof}

Finally, as a notational convention, when we specify a set, we always list the elements
in increasing order. In diagrams the symbol $\star$ marks an entry we have no need
to specify more explicitly.

\subsection{Base case: one top corner box}

In this case $\mu = \varnothing$
and $\lambda = (n-\ell,1^\ell)$ for some $\ell \in \N_0$. If $\ell=0$ then there is a unique standard
$(n)$-tableau and the result is clear. Suppose that $\ell > 0$ and 
let $t$ be a standard $(n-\ell,1^\ell)$-tableau with entries 
$\{1,y_1,\ldots, y_{\ell-1},c\}$ in column $1$.
(By our notational convention, $1 < y_1 < \ldots < y_{\ell-1} < c$.) If $c = n$ then
$\widetilde{t^+}$ is standard
with first column entries $\{ 1, 1^+, y_1^+, \ldots, y_{r-1}^+\}$.
Hence, assuming that $t$ is trace-contributing, we have $c < n$.
After a sequence of horizontal swaps applied to $\ewidetilde{t^+}{8pt}$ we obtain the tableau shown below.

\smallskip
\begin{center}
\begin{tikzpicture}[x=0.85cm, y=0.75cm]
\tbox{1}{1}\tlabel{1}{1}{$1^+$}
\tbox{1}{2}\tlabel{1}{2}{$1$}
\tbox{1}{3}\tlabel{1}{3}{$\star$}
\tput{1.5}{4.5}{$\ldots$}
\tbox{1}{5}\tlabel{1}{5}{$\star$}

\tbox{2}{1}\tlabel{2}{1}{$y_1^+$}
\tput{3.375}{1.5}{$\vdots$}
\tbox{4}{1}\tlabel{4}{1}{$y_{\ell-1}^+$}
\tbox{5}{1}\tlabel{5}{1}{$c^+$}
\end{tikzpicture}
\end{center}

A Garnir swap of $1$ with $1^+$ or any $y_i^+$ gives, after column straightening and a sequence of horizontal swaps, a standard
tableau having $c^+$ in its bottom left position. We may therefore assume, by Lemma~\ref{lemma: straightening}, that $1$ is swapped with $c^+$.
After column straightening, which introduces the sign $(-1)^\ell$,
a sequence of horizontal swaps gives the standard tableau having $\{1,1^+, y_1^+, \ldots, y_{\ell-1}^+\}$ in its first column. Thus if $t$ is trace-contributing
then $\{1^+, y_1^+, \ldots, y_{\ell-1}^+ \} = \{y_1,\ldots, y_{\ell-1}, c\}$.
By Lemma~\ref{lemma: shiftset}, $\{y_1,\ldots, y_{\ell-1}, c\} = \{2, \ldots, \ell+1\}$. Therefore
$t = t_{(n-\ell,1^\ell)}$ and the coefficient of $e(t_{(n-\ell,1^\ell)})$ in $e(t_{(n-\ell,1^\ell)}^+)$ is $(-1)^\ell$, as required.

\subsection{Inductive step}
Let $\delta(i) \in \N_0^{\ell(\lambda)}$ denote the composition
defined by $\delta(i)_i = 1$ and $\delta(i)_{k} = 0$ if $k \not= i$.

\begin{proposition}\label{prop:ieqip}
Let $\lambda/\mu$ be a border strip, and let $t$ be a standard $\lambda/\mu$-tableau. Let $c \in \N$ and suppose 
that \emph{either} 
$c=1$ \emph{or} $c > 1$ and 
the entries $1,\ldots, c^-$ and $n$ lie in the
same column of $t$.
Let $(i,j)$ be the box of $t$ containing $c$, and let $(i',j')$ be the
box of $\widetilde{t^+}$ containing $c$. 
If $t$ is a trace-contributing tableau, then $i = i'$.
\end{proposition}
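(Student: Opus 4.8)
The plan is to track how the entry $c$ moves during the straightening of $\widetilde{t^+}$ into $t$, using Lemma~\ref{lemma: straightening} together with the constraint coming from Corollary~\ref{cor: dominance} applied with $x = c$. First I would set up the notation: by hypothesis the entries $1, \ldots, c^-$ (and $n$) occupy a single column of $t$, say column $j_0$; since $t$ is standard they occupy the top $c-1$ boxes of that column, while $n$ sits at the bottom of column~$j_0$. Passing to $\widetilde{t^+}$, the entries $1^+=2, \ldots, c = (c^-)^+$ together with $1 = n^+$ occupy the same set of boxes of column~$j_0$, but now reordered so that $1$ is at the top and $2, \ldots, c$ follow. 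Thus in $\widetilde{t^+}$ the boxes containing $1, \ldots, c^-$ agree with those in $t$ except that one of them (the former position of $c^-$, i.e.\ the $(c-1)$th box of column~$j_0$) now holds $c$, while $c$ has vanished from its old box $(i,j)$ and $1$ has appeared at the top of column~$j_0$.

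Next I would invoke Corollary~\ref{cor: dominance} in both directions. Applied with $s = t$, $u = \widetilde{t^+}$ and $x = c$: if the boxes containing $1, \ldots, c^-$ were the same in $t$ and $\widetilde{t^+}$ and $c$ were in a strictly lower row in $\widetilde{t^+}$ than in $t$, then $t$ would have coefficient $0$ in the expansion of $e(\widetilde{t^+})$, contradicting that $t$ is trace-contributing. But the boxes of $1,\ldots,c^-$ are \emph{not} quite the same in the two tableaux — they differ exactly at the $(c-1)$th box of column $j_0$, which holds $c^-$ in $t$ and (as noted) $c$ in $\widetilde{t^+}$. So I would first argue that during the straightening sequence of Lemma~\ref{lemma: straightening} the entries $1, \ldots, c^-$ must first be brought into their final positions (those they occupy in $t$), and apply Corollary~\ref{cor: dominance} at the intermediate stage $u'$ where $1,\ldots,c^-$ already agree with $t$: then $c$ must be weakly higher in $u'$ than in $t$, i.e.\ $i' \le i$ where $i'$ is read off after this preliminary straightening. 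Running the same argument in reverse — straightening $t$ back towards $\widetilde{t^+}$, or equivalently applying the dominance comparison the other way using that $\widetilde{t^+}$ is itself the straightening of a specific column-standard tableau — gives $i \le i'$, hence $i = i'$.

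The delicate point, and what I expect to be the main obstacle, is justifying that one may reorder the straightening sequence so that the entries $1, \ldots, c^-$ are placed first, and more importantly confirming that after this the box of $c$ in the partially-straightened tableau is the same as its box $(i',j')$ in the fully-straightened $\widetilde{t^+}$ — i.e.\ that no later swap (involving only entries $\ge c$, once $1,\ldots,c^-$ are fixed in the column $j_0$) moves $c$ vertically. Here the border-strip hypothesis and the structure forced by "$1,\ldots,c^-$ and $n$ lie in one column" should be used: once those entries are locked into column $j_0$, any Garnir swap, horizontal swap or column straightening affecting $c$ is confined to a region that does not change the row of $c$, because a vertical move of $c$ past a larger entry would require a non-singleton column configuration already ruled out, or would disturb the locked column. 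I would make this precise by an induction on the length of the straightening sequence, peeling off swaps that do not involve any of $1, \ldots, c$ and showing the remaining ones act on $c$ only horizontally, so that $i' = i$. The comparison $i' \le i$ is then immediate from Corollary~\ref{cor: dominance} with $x = c$, and the reverse inequality follows by symmetry of the dominance argument between $t$ and $\widetilde{t^+}$.
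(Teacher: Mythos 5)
Your first inequality is sound and matches the paper: applying Lemma~\ref{lemma: dominance} (via Corollary~\ref{cor: dominance} with $x=c$, using that $1,\ldots,c^-$ occupy the same boxes in $t$ and $\widetilde{t^+}$) gives $\sh_{\le c}(\overline{\widetilde{t^+}}) \unrhd \sh_{\le c}(t)$, hence $i \ge i'$. The gap is in how you propose to obtain the reverse inequality $i \le i'$. You claim it follows ``by symmetry'' or by ``applying the dominance comparison the other way,'' but Lemma~\ref{lemma: dominance} is intrinsically one-directional: it says that if $e(t)$ appears with non-zero coefficient in the expansion of $e(\widetilde{t^+})$, then $t \unlhd \overline{\widetilde{t^+}}$. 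There is no dual statement giving $t \unrhd \overline{\widetilde{t^+}}$, and ``straightening $t$ back towards $\widetilde{t^+}$'' is not a meaningful operation since $t$ is already standard. So your proposed symmetry does not exist, and $i \le i'$ does not come for free.

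The remaining hard case — ruling out $i > i'$ (together with $j < j'$) — is exactly where the real work of the proof lies, and your sketch does not supply a valid mechanism. You assert that once $1,\ldots,c^-$ are locked, no swap moves $c$ vertically ``because a vertical move of $c$ past a larger entry would require a non-singleton column configuration already ruled out, or would disturb the locked column,'' but this is precisely what must be proved, and it is false as stated: Garnir swaps in the straightening of $\widetilde{t^+}$ certainly can move $c$ between rows. The paper's proof handles this by stopping the straightening sequence at the first step where $c$ enters row $i$, identifying the entries $a^+$ and $d^+$ in boxes $(i,\ell)$ and $(i,\ell-1)$, and introducing the quantity $r$ (the largest value such that $c,\ldots,r$ lie strictly left of column $\ell$ in $t$); a careful comparison of $\sh_{\le r^+}$ for $t$ and for the intermediate tableau $v$ then yields $v \notunrhd t$, contradicting the trace-contributing hypothesis via Lemma~\ref{lemma: dominance}. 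Your proposal contains no analogue of this argument, so the case $i>i'$ is not actually excluded.
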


\begin{proof}
By hypothesis, 
the highest $c^-$ entries in column $j'$ of $t$ and $\widetilde{t^+}$ are 
$1, \ldots, c^-$. Let $s = \widetilde{t^+}$.
Setting $\beta = \sh_{< c}(t) = \sh_{< c}(\bar{s})$ we
have $\sh_{\le c}(t) = \beta + \delta(i)$ and $\sh_{\le c}(\bar{s}) = \beta + \delta(i')$.
By Lemma~\ref{lemma: dominance}, the hypothesis that $t$ is trace-contributing implies that
$\sh_{\le c}(\bar{s}) \unrhd \sh_{\le c}(t)$. Therefore $i \ge i'$.

If $j = j'$ then \emph{either} $c=1$ and $1$ is at the top of the column of $t$
which has $n$ at its bottom, \emph{or} $c > 1$ and $c$ is immediately below $c^-$ 
in both $s$ and~$t$. In either case $i=i'$.

We may therefore suppose, for a contradiction, that $i > i'$ and $j < j'$.
By  hypothesis the box $(i,j)$ of $t$ containing $c$ is the top corner box in row~$i$.
Let $(i,\ell)$ be the bottom corner box in row $i$; note that $\ell \le j'$,
as shown in the diagram below.
\begin{center}
\begin{tikzpicture}[x=1cm, y=1cm]

\tbox{0}{7}\tlabel{0}{7}{$\scriptstyle (i',j')$}
\draw (8,0)--(8.5,0);
\draw (8,-1)--(8.5,-1);
\draw (7,-1)--(7,-1.5);
\draw (8,-1)--(8,-1.5);

\tput{1.4}{7.5}{$\vdots$}
\tput{0.5}{8.5}{$\ldots$}
\tput{2.4}{5.5}{$\vdots$}
\tput{4.4}{1.5}{$\vdots$}

\tbox{3}{5}\tlabel{3}{5}{$\scriptstyle (i,\ell)$}
\tbox{3}{4}\tlabel{3}{4}{$\scriptstyle (i,\ell-1)$}
\tput{3.5}{3}{$\ldots$}
\tbox{3}{1}\tlabel{3}{1}{$\scriptstyle (i,j)$}
\draw (4,-3)--(3.5,-3);
\draw (4,-4)--(3.5,-4);

\draw (5,-3)--(5,-2.5);
\draw (6,-3)--(6,-2.5);

\draw (2,-3)--(2.5,-3);
\draw (2,-4)--(2.5,-4);

\draw (1,-4)--(1,-4.5);
\draw (2,-4)--(2,-4.5);

\end{tikzpicture}
\end{center}

By the hypothesis that $t$ is trace-contributing and Lemma~\ref{lemma: straightening} 
there is a sequence of horizontal swaps, Garnir swaps, and column straightenings from $\widetilde{t^+}$
to $t$. Suppose that in such a sequence an entry $b < c$ is moved. If~$b$ is the 
first such entry moved in this sequence, and $u$ is the tableau obtained after column straightening, 
then, by Corollary~\ref{cor: dominance}
applied with $x = b$, the coefficient of $e(t)$ in $e(u)$ is zero.  
Therefore the entries $\{1,\ldots,c^-\}$ are fixed and $c$ is the smallest number moved.
Take such a sequence and stop
it immediately after 
the first swap in which $c$ enters
row $i$. Let $v$ be the column standard
tableau so obtained, and let $u$ be its immediate predecessor. 

When $c$ enters row $i$ of $v$, it is 
swapped with the entry, $d^+$ say, in box $(i,\ell-1)$ of $u$. 
Observe that the entries in boxes 
strictly to the left of column $\ell$ are the same in $\widetilde{t^+}$ and $u,$
since no swap in the sequence from $\widetilde{t^+}$ to $u$
involves an entry in these columns.
Let $a^+$ be the entry in box $(i,\ell)$ of $u$. Thus the column standard tableau
$u$ is as shown overleaf and $v = \ewidetilde{u(c,d^+)}{7pt}$.

\begin{center}
\begin{tikzpicture}[x=0.75cm, y=0.75cm]
\tput{-0.625}{5.5}{$\vdots$}
\tbox{0}{5}\tlabel{0}{5}{$c$}
\tput{1.375}{5.5}{$\vdots$}
\tbox{3}{5}\tlabel{3}{5}{$a^+$}
\tbox{3}{4}\tlabel{3}{4}{$d^+$}
\tput{3.5}{3}{$\ldots$}
\tbox{3}{1}\tlabel{3}{1}{$c^+$}
\draw (4,-3)--(3.5,-3);
\draw (4,-4)--(3.5,-4);

\draw (5,-3)--(5,-2.5);
\draw (6,-3)--(6,-2.5);

\draw (5,-1)--(5,-1.5);
\draw (6,-1)--(6,-1.5);

\draw (5,0)--(5,0.5);
\draw (6,0)--(6,0.5);

\draw (2,-3)--(2.5,-3);
\draw (2,-4)--(2.5,-4);

\draw (1,-4)--(1,-4.5);
\draw (2,-4)--(2,-4.5);

\tput{3.5}{7}{row $i$}
\tput{4.4}{1.5}{$\vdots$}

\tput{4.6}{5.5}{col $\ell$}
\tput{2.4}{1.6}{col $j$}

\end{tikzpicture}
\end{center}

\noindent Note that $d^+ > a^+$ since otherwise $u$
is standard with respect to all boxes weakly to the left of column $\ell$,
and so $d^+$ cannot be moved in a Garnir swap.

To complete the proof we require the following critical quantity.
\emph{Let $r$ be maximal such that entries $c, \ldots, r$ are strictly to the
left of column $\ell$ in the original tableau $t$.}
If $r = d$ then, since $d > a$, $a$ is strictly to the left of column $\ell$ in $t$;
this is impossible since $a^+$ appears in column $\ell$ in $u$. Therefore $r < d$.
Since $d$ is in position $(i,\ell-1)$ of $t$ and $r \ge c$, it follows that 
$c \not = d$. Moreover, the entries $c^+, \ldots, r^+$ are in the same
boxes in $t^+$ and $v.$


\emph{Claim.} We have $v \notunrhd t$. 
\noindent\emph{Proof of claim.} 
Let $\sh_{\{c^+, \ldots, r^+\}}(u) = \delta$. 
By hypothesis and our stopping condition on swaps,
if $q \le r$ then the box of $q^+$ in $u$ is the box of $q$ in $t$.
Hence  $\sh_{\{c,\ldots,r\}}(t) = \delta$.
Since $d > r$ and $d$ is in position $(i,\ell-1)$ of $t$, we see that
$r^+$ is not in row~$i$ of $t$. By maximality of $r$, the row of $t$
containing $r^+$
is row $h$ for some $h < i$. Clearly the row of $c$ in $v$ is $i$.
Therefore $\sh_{\{c,\ldots,r^+\}}(\bar{v}) = \delta + \delta(i)$ and
$\sh_{\{c,\ldots,r^+\}}(t) = \delta + \delta(h)$. Since $1,\ldots, c^-$ are in
the same positions in both $v$ and $t$, it follows that
\[ \sh_{\le r^+}(t) \rhd \sh_{\le r^+}(\bar{v}) \]
which implies the claim. 

It now follows from Lemma~\ref{lemma: dominance}, as before, that $e(t)$
does not appear in $e(v)$, a final contradiction. This completes the proof.
\end{proof}

%

\begin{corollary}\label{cor:toprow}
If $t$ is a trace-contributing tableau then either $1$ and $n$
are in the same column of $t$, or $1$ and $n$ are in the top row of $t$.
\end{corollary}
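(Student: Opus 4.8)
The plan is to apply Proposition~\ref{prop:ieqip} with the choice $c = 1$, which is always permissible since the hypothesis is satisfied vacuously in the case $c = 1$. So let $t$ be a trace-contributing $\lambda/\mu$-tableau, let $(i,j)$ be the box of $t$ containing $1$, and let $(i',j')$ be the box of $\widetilde{t^+}$ containing $1$. Proposition~\ref{prop:ieqip} gives $i = i'$, so $1$ stays in the same row when we pass from $\widetilde{t^+}$ to $t$. Combining this with the second assertion of Lemma~\ref{lemma: straightening} --- that whenever $1$ moves in a straightening sequence it moves left or down --- I would argue that $1$ cannot move down (that would change its row), so in any straightening sequence from $\widetilde{t^+}$ to $t$, the entry $1$ only ever moves left, staying in row $i$.

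Next I would pin down the column of $1$ in $\widetilde{t^+}$. In $t^+$ the entry $1$ occupies the box that contained $n$ in $t$; say that box is $(k,\ell)$, the box of $n$ in $t$. Since $t^+$ agrees columnwise with $\widetilde{t^+}$, the entry $1$ sits at the \emph{top} of column $\ell$ in $\widetilde{t^+}$, because $1$ is the minimum entry and $\widetilde{t^+}$ is column standard; that top box is $(\mu_\ell' + 1, \ell)$. So $i' = i = \mu_\ell' + 1$. On the other hand, in $t$ the entry $1$ is in box $(i,j)$ and, since $t$ is standard, $1$ is at the top of column $j$ of $t$, i.e. $i = \mu_j' + 1$. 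Because $1$ only moves left during straightening, we have $j \le \ell$.

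Now I would split into two cases according to whether $j = \ell$ or $j < \ell$. If $j = \ell$, then $(i,j) = (k,\ell)$ (both are the top box of column $\ell$, and $k \le \mu_\ell'+1$ forces $k = i$ unless $n$ were strictly below; but actually I should note $n$ is in box $(k,\ell)$ and $1$ is in $(i,\ell)$ with $i = \mu_\ell'+1$, so $n$ being in column $\ell$ at row $k \ge i$ together with $1$ being at row $i$ means we directly get that $1$ and $n$ are in the same column $\ell$ of $t$), which is the first alternative of the corollary. If $j < \ell$, then the box $(i, \ell)$ lies strictly to the right of $(i,j)$ in row $i$ of $\lambda/\mu$, so row $i$ of the skew diagram has at least two boxes; since $\lambda/\mu$ is a border strip and $n$ lies in column $\ell$ of $t$ at row $k$, I claim $k = i$, so that row $i$ contains box $(i,\ell)$ of $t$ which holds $n$. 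Then $1$ (in box $(i,j)$) and $n$ (in box $(i,\ell)$) lie in the same row $i$. It remains to see this row is the top row: since $\lambda/\mu$ is a border strip with no empty rows (by our normalisation $\mu_{\ell(\lambda)} = 0$), consecutive non-empty rows overlap in a column, and the largest entry $n$ of a standard tableau lies in the lowest box of some column; if $n$ were not in the top row, the box immediately above $n$ in $t$ would hold a larger entry, which is impossible. Hence row $i$ is row $1$.

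The main obstacle will be the bookkeeping in the case $j < \ell$ --- precisely, showing that $n$ really does end up in row $i$ of $t$ rather than merely in column $\ell$, and then that row $i$ is forced to be the top row. I expect the cleanest route is to observe directly from the structure of border strips: if $1$ is not at the top of the strip then the box above $1$'s box exists and, being standard, contradicts $1$ being the minimum; combined with the row-preservation of $1$ this should localise everything. I would phrase the proof compactly as: apply Proposition~\ref{prop:ieqip} with $c=1$ and Lemma~\ref{lemma: straightening} to see $1$ is in the same row of $t$ as in $\widetilde{t^+}$, namely the row containing the box of $n$ in $t$; since $\lambda/\mu$ is a border strip and $t$ is standard, $1$ occupying the top of its column together with $n$ occupying the bottom of its column forces either that these columns coincide, or that both $1$ and $n$ lie in the (common, topmost) row of the strip.
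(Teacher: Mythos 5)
Your approach is the same as the paper's: apply Proposition~\ref{prop:ieqip} with $c=1$ to pin down the row of $1$, then chase border-strip combinatorics. Your split into $j=\ell$ versus $j<\ell$ is equivalent to the paper's split on whether the column of $n$ in $t$ is singleton, and your treatment of the first case is correct.

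The case $j<\ell$ has two problems. The assertion that $k=i$ (equivalently, that column $\ell$ is singleton) is stated but not justified, as you acknowledge. It does hold: since the tops of columns $j$ and $\ell$ are both in row $i$ and $\mu'$ is weakly decreasing, all of columns $j,\ldots,\ell$ have their top box in row $i$; the border-strip overlap condition between consecutive columns then forces columns $j+1,\ldots,\ell$ to be singleton, so $k=\lambda_\ell'=i$. The more serious problem is the sentence ``if $n$ were not in the top row, the box immediately above $n$ in $t$ would hold a larger entry, which is impossible.'' This is backwards: in a column-standard tableau the entry above $n$ would be \emph{smaller}, so there is no contradiction from that side. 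What actually forces $i=1$ is that $(i-1,\ell)\notin[\lambda/\mu]$ (since $i=\mu_\ell'+1$), whereas if $i>1$ the border strip would have to place a box at $(i-1,\ell)$: row $i-1$ is non-empty and overlaps row $i$ in exactly the rightmost column $\lambda_i$ of row $i$, and $\ell=\lambda_i$ because $n$ in a bottom corner box gives $(i,\ell+1)\notin[\lambda/\mu]$. The paper sidesteps this computation by observing directly that in the singleton case $n$ is necessarily the top-right entry of $t$, and then reading off $i=1$ from Proposition~\ref{prop:ieqip}; in the non-singleton case it identifies $(i,j)$ with the top box of the column of $n$ because a border strip has at most one top corner box per row, which is tidier than tracking how $1$ moves.
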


\begin{proof}
Let $1$ and $n$ be in positions $(i,j)$ of $t$ and $(i',j')$ of $t$,
respectively. If column $j'$ is singleton then $n$ is the top right
entry of $t$ and, taking $c=1$ in Proposition~\ref{prop:ieqip}, we get $i=i'$;
thus $1$ and $n$ are in the top row of~$t$. Otherwise, when we column
straighten $t^+$ to obtain $\widetilde{t^+}$, the entry $1$
in position $(i',j')$ moves up to position $(i'',j')$ where $i'' < i'$.
Again taking $c=1$ in Proposition~\ref{prop:ieqip}, we get $i=i''$. Since $(i'',j')$ is
the top corner box in its row, and so is $(i,j)$, we see that $j=j'$. Hence
$1$ and $n$ are in the same column of $t$.
\end{proof}

\begin{proof}[Proof of Proposition \ref{prop: unique}]
We now complete the inductive step of the proof. 

Suppose that $\lambda/\mu$ has more than one top corner box, 
and that $t$ is a trace-contributing $\lambda/\mu$-tableau.  
Let 1 be in position $(i,j)$ of $t$ and in position $(i',j')$ of $\widetilde{t^+}$. 
By Proposition \ref{prop:ieqip}, we have  $i = i'.$

\smallskip
{\it Case} (1). Suppose that $1$ and $n$ lie in the same row of $t$. By Corollary~\ref{cor:toprow},
this is the top row. Let the entries in the top row be $\{1, x_1, \ldots, x_{k-1}, n \}$,
and let the entries in the column of $1$ be $\{1, y_1, \ldots, y_{\ell-1}, c\}$.

Straightening the top row of $t^+$ by a sequence of $k-1$
horizontal swaps moves~$1^+$ and $1$ into
adjacent positions, giving the tableau $u$ shown below.

\begin{center}
\begin{tikzpicture}[x=0.85cm, y=0.75cm]
\tbox{1}{1}\tlabel{1}{1}{$1^+$}
\tbox{1}{2}\tlabel{1}{2}{$1$}
\tbox{1}{3}\tlabel{1}{3}{$x_1^+$}
\tput{1.5}{4.5}{$\ldots$}
\tbox{1}{5}\tlabel{1}{5}{$x_{k-1}^+$}

\tbox{2}{1}\tlabel{2}{1}{$y_1^+$}
\tput{3.375}{1.5}{$\vdots$}
\tbox{4}{1}\tlabel{4}{1}{$y_{\ell-1}^+$}
\tbox{5}{1}\tlabel{5}{1}{$c^+$}

\draw (1,-5)--(0.5,-5);
\draw (1,-6)--(0.5,-6);

\tput{5.5}{0.5}{$\ldots$}

\end{tikzpicture}
\end{center}

As in the base case, the only Garnir swap that can lead to $t$ is $(1,c^+)$,
which introduces the sign $(-1)^\ell$. Let $v = \ewidetilde{u(1,c^+)}{8pt}$, as shown below.

\begin{center}
\begin{tikzpicture}[x=0.85cm, y=0.75cm]
\tbox{1}{1}\tlabel{1}{1}{1}
\tbox{1}{2}\tlabel{1}{2}{$c^+$}
\tbox{1}{3}\tlabel{1}{3}{$x_1^+$}
\tput{1.5}{4.5}{$\ldots$}
\tbox{1}{5}\tlabel{1}{5}{$x_{k-1}^+$}

\tbox{2}{1}\tlabel{2}{1}{$1^+$}
\tbox{3}{1}\tlabel{3}{1}{$y_1^+$}
\tput{4.375}{1.5}{$\vdots$}
\tbox{5}{1}\tlabel{5}{1}{$y_{\ell-1}^+$}

\draw (1,-5)--(0.5,-5);
\draw (1,-6)--(0.5,-6);

\tput{5.5}{0.5}{$\ldots$}

\end{tikzpicture}
\end{center}

By Lemma~\ref{lemma: straightening} and Corollary~\ref{cor: dominance}, 
$v$ can be straightened by a sequence of horizontal swaps, Garnir swaps and column straightenings which
either fix $1$, and so leave invariant the content of its top row,
or move $1$ into a lower row, giving a tableau, $w$ say, such that,
$e(t)$ does not appear in $e(w)$. Since $e(t)$ has a non-zero coefficient in $e(v)$, we have
\[ \{c^+, x_1^+, \ldots, x_{k-1}^+ \} = \{x_1, \ldots, x_{k-1}, n \}. \]
Lemma~\ref{lemma: shiftset} implies that $c^+ = x_1 = n-k+1$, $x_{k-1}^+ = n$ and
$\{x_1,\ldots, x_{k-1}\} = \{n-k+1, \ldots, n-1\}$. 
Thus $t$ and $v$ have top row entries $\{1, n-k+1, \ldots, n\}$.

Let $T$ and $V$ be the tableaux obtained from $t$ and $v$ by deleting
all but the top corner box in their top rows. This removes entries $\{n-k+1,\ldots, n\}$.
Let $\lambda^\star/\mu$ be the common shape of $T$ and $V$.
Observe that $T$ has greatest
entry $n-k = c$ in the bottom corner box of its rightmost column
and that $V$ is the column straightening of $T^\dagger$, where 
$\dagger$ is defined as $+$ on tableaux, but replacing $n$ with $n-k$.
By induction, $T = t_{\lambda^\star/\mu}$, and since $t$
has $n-k+1, \ldots, n$ in its top row, we have $t = t_{\lambda/\mu}$.
Moreover, the coefficient of $e(T)$ in $e(T^\dagger)$ is $(-1)^{\height(\lambda^\star/\mu)}$,
Since 
$\height(\lambda^\star/\mu) = \height(\lambda/\mu)$, the
coefficient of $e(t)$ in $e(t^+)$ is $(-1)^{\height(\lambda/\mu)}$, as required.

\smallskip
{\it Case} (2). If Case (1) does not apply then, since $i=i'$, 
$1$ and $n$ are in the same column of $s$ and so $j = j'$.
Take $c$ maximal such that $1,2,\ldots, c^-$
are in column $j$ of $t.$ 
Suppose that in column $j$ of $t,$ the entry immediately below $c^-$ 
equals $d$ for some $d < n.$ 
By Proposition~\ref{prop:ieqip}, the row of $c$ in $t$
is the same as the row of $c$ in $\widetilde{t^+}$. 
It follows that $c = d,$ which contradicts the maximality of $c$ unless 
column $j$ of $t$ has entries $1,2, \ldots, c^-, n$, as shown below.

\begin{center}
\begin{tikzpicture}[x=0.75cm, y=0.75cm]
\tbox{1}{1}\tlabel{1}{1}{1}

\tbox{2}{1}\tlabel{2}{1}{$2$}
\tbox{4}{1}\tlabel{4}{1}{$c^-$}
\tput{3.375}{1.5}{$\vdots$}
\tbox{5}{1}\tlabel{5}{1}{$n$}

\tbox{5}{-3}\tlabel{5}{-3}{$c$}
\draw (-2,-5)--(-1.5,-5);
\draw (-2,-6)--(-1.5,-6);
\tput{5.5}{-1.5}{$\ldots$}

\draw (2,-1)--(2.5,-1);
\draw (2,-2)--(2.5,-2);
\tput{1.5}{2.5}{$\ldots$}

\draw (-3,-6)--(-3,-6.5);
\draw (-2,-6)--(-2,-6.5);
\tput{6.375}{-2.5}{$\vdots$}

\draw (1,-5)--(0.5,-5);
\draw (1,-6)--(0.5,-6);

\tput{5.5}{0.5}{$\ldots$}

\tput{5.5}{4}{row $i = i'$}

\tput{6.6}{1.5}{col $j$}

\end{tikzpicture}
\end{center}

By Lemma~\ref{lemma: straightening} there is a 
sequence of horizontal swaps, Garnir swaps and column straightenings from $\widetilde{t^+}$ to $t$. 
As seen in the proof
of Proposition~\ref{prop:ieqip}, it follows easily from Lemma~\ref{lemma: dominance}
that $1, \ldots, c^-$ do not move.
Let $X$ be the set of entries of $t$ lying strictly to the right of column $j$.
These entries become $X^+$ in $\widetilde{t^+}$, which is standard with respect to these columns.
No permutation in our chosen sequence can involve a entry in one of these columns. Hence $X^+ = X$,
and so $X = \varnothing$.

We have shown that $j$ is the rightmost column of $t$, and that $t$ agrees with
$t_{\lambda/\mu}$ in this column.
Let $T$ be 
the tableau obtained from $t$ by deleting all but the bottom corner box in column $j$
and subtracting $c^-$ from each remaining entry. Thus the top row of $T$ has entries
$1, \ldots, n-c^-$ and $n-c^-$ is its greatest entry. Let $T$ have shape
 $\lambda^\star/\mu^\star$.
By induction, $T = t_{\lambda^\star/\mu^\star},$ and hence $t = t_{\lambda/\mu}$. 
Let $T^\dagger$ be defined as $T^+$, but replacing $n$ with $n-c^-$.
By induction, the coefficient of $e(T)$ in $e(T^\dagger)$,
is $(-1)^{\height(\lambda^\star/\mu^\star)}$.
Since $\height(\lambda^\star/\mu^\star) + c^- = \height(\lambda/\mu)$, and
the sign introduced by column straightening $t^+$ is $(-1)^{c^-}$,
the coefficient of $e(t)$ in $e(t^+)$ is $(-1)^{\height(\lambda/\mu)}$, as required.
\end{proof}
\section{Proof of Theorem~\ref{thm: MN rule}}
\label{sec: endgame}

Let $\lambda/\mu$ be a skew partition of size $n$ and let $\rho \in S_n$ be an $n$-cycle.
In order to complete the proof of Theorem~\ref{thm: MN rule}, we must show that
$\chi^{\lambda / \mu}(\rho) = 0$ if $\lambda / \mu$ is not a border strip. 
We require the following two lemmas.
\vspace*{-5pt}
\begin{lemma}\label{lemma: YP}
Let $0\le \ell \le n$. If
\[ \langle \chi^{\lambda}, \chi^\mu \times 1_{S_\ell}\times \sgn_{S_{n-\ell}} 
\Ind_{S_m \times S_\ell \times S_{n-\ell}}^{S_{m+n}} \rangle > 0 \]
then $[\lambda/\mu]$ has no four boxes making the shape $(2,2)$. 
\end{lemma}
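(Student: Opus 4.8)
The plan is to evaluate the inner product exactly as a count of intermediate partitions and then run a short combinatorial argument on $2\times 2$ blocks. First I would use Frobenius reciprocity to rewrite the left-hand side as
\[ \langle\chi^\lambda\Res_{S_m\times S_\ell\times S_{n-\ell}},\; \chi^\mu\times 1_{S_\ell}\times\sgn_{S_{n-\ell}}\rangle. \]
Then I would compute the restricted character by applying~\eqref{eq: charRes} twice: once for the subgroup $S_{m+\ell}\times S_{n-\ell}$ of $S_{m+n}$, and then, inside each summand $\chi^\nu\times\chi^{\lambda/\nu}$, once more for the subgroup $S_m\times S_\ell$ of $S_{m+\ell}$. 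This is pure substitution and gives
\[ \chi^\lambda\Res_{S_m\times S_\ell\times S_{n-\ell}} \;=\; \sum_{\kappa\subseteq\nu\subseteq\lambda}\chi^\kappa\times\chi^{\nu/\kappa}\times\chi^{\lambda/\nu}, \]
the sum over partitions $\kappa$ of $m$ and $\nu$ of $m+\ell$ with $\kappa\subseteq\nu\subseteq\lambda$. Since $\langle\chi^\kappa,\chi^\mu\rangle=\delta_{\kappa\mu}$, pairing with $\chi^\mu\times 1_{S_\ell}\times\sgn_{S_{n-\ell}}$ collapses the first factor, leaving $\sum_{\nu}\langle\chi^{\nu/\mu},1_{S_\ell}\rangle\,\langle\chi^{\lambda/\nu},\sgn_{S_{n-\ell}}\rangle$, summed over $\nu$ with $\mu\subseteq\nu\subseteq\lambda$ and $|\nu|=m+\ell$. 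Finally, by Lemma~\ref{lemma: JP} together with the forms of Young's rule and Pieri's rule recorded in~\S\ref{sec: Pieri}, $\langle\chi^{\nu/\mu},1_{S_\ell}\rangle$ is $1$ or $0$ according as $\nu/\mu$ is or is not a horizontal strip, and $\langle\chi^{\lambda/\nu},\sgn_{S_{n-\ell}}\rangle$ is $1$ or $0$ according as $\lambda/\nu$ is or is not a vertical strip. So the left-hand side of the lemma equals the number of partitions $\nu$ with $\mu\subseteq\nu\subseteq\lambda$ for which $\nu/\mu$ is a horizontal strip and $\lambda/\nu$ is a vertical strip.

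Next I would show that no such $\nu$ can exist once $[\lambda/\mu]$ contains a $2\times 2$ block. So suppose this count is positive, fix such a $\nu$, and suppose for a contradiction that $[\lambda/\mu]$ contains the boxes $(i,j),(i,j+1),(i+1,j),(i+1,j+1)$. Each of these lies in $[\lambda]\setminus[\mu]$, hence (as $[\mu]\subseteq[\nu]\subseteq[\lambda]$) lies in $[\nu/\mu]$ if it is in $[\nu]$ and in $[\lambda/\nu]$ otherwise. Put $p=\nu'_j$ and $q=\nu'_{j+1}$, so $p\ge q$; a box $(r,j)$ lies in $[\nu]$ exactly when $r\le p$, and $(r,j+1)$ exactly when $r\le q$. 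If $p\ge i+1$ then $(i,j)$ and $(i+1,j)$ both lie in $[\nu/\mu]$, two boxes in column $j$ of the horizontal strip $\nu/\mu$ — impossible; hence $p\le i$, and so $q\le i$ as well. If moreover $p\le i-1$ then $q\le i-1$ too, so $(i,j)$ and $(i,j+1)$ both lie in $[\lambda/\nu]$, two boxes in row $i$ of the vertical strip $\lambda/\nu$ — impossible; hence $p=i$. But then $q\le p=i<i+1$, so $(i+1,j)$ and $(i+1,j+1)$ both lie in $[\lambda/\nu]$, two boxes in row $i+1$ of the vertical strip $\lambda/\nu$ — the final contradiction. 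Therefore $[\lambda/\mu]$ has no four boxes forming the shape $(2,2)$.

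The first paragraph is routine assembly of results already established; the only point needing care is that iterating~\eqref{eq: charRes} really is just substitution of one restriction into another, so no comparison of outer tensor summands is required. The real work is the second paragraph, and the main obstacle there is choosing the right invariants: tracking the two column heights $p=\nu'_j\ge q=\nu'_{j+1}$, the horizontal-strip condition on $\nu/\mu$ forces $p\le i$, the vertical-strip condition on row $i$ then forces $p=i$, and the vertical-strip condition on row $i+1$ produces the contradiction. (Alternatively one could quote the skew branching rule $\chi^{\lambda/\mu}\Res_{S_\ell\times S_{n-\ell}}=\sum_\nu\chi^{\nu/\mu}\times\chi^{\lambda/\nu}$ in place of the double application of~\eqref{eq: charRes}, but deriving exactly what is needed directly is no trouble.)
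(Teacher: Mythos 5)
Your proof is correct and follows essentially the same route as the paper: reduce, via Pieri's and Young's rules, to the existence of an intermediate partition $\nu$ with $\nu/\mu$ a horizontal strip and $\lambda/\nu$ a vertical strip, then show a $2\times 2$ block in $[\lambda/\mu]$ makes this impossible. You work from the restriction side using~\eqref{eq: charRes} where the paper works from the induction side, and you spell out the column-height case analysis that the paper compresses into one sentence, but the underlying idea is the same.
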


\begin{proof}
By the versions of Pieri's rule and Young's rule proved at the end of~\S\ref{sec: Pieri},
the hypothesis implies that $\lambda$ is obtained from $\mu$ by adding a horizontal
strip of size $\ell$ and then a vertical strip of size $n-\ell$. If two boxes
from a horizontal strip are added to row $i$ then at most one box can be added
below them in row $i+1$ by a vertical strip. The result follows.
\end{proof}

\begin{lemma}\label{lemma: hookCase}
If $\lambda$ is a partition of $n$ and $\rho$ is an $n$-cycle then $\chi^\lambda(\rho) \not=0$
if and only if $\lambda = (n-\ell,1^\ell)$ where $0 \le \ell < n$.
\end{lemma}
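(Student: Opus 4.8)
The plan is to derive both implications from the border-strip case of~\eqref{eq: skewCase} already established in~\S\ref{sec: snakeCase}, together with the column orthogonality relations for the irreducible characters of $S_n$. I emphasise that this uses neither the general case of~\eqref{eq: skewCase} nor Theorem~\ref{thm: MN rule} itself, so there is no circularity; the only inputs from \S\ref{sec: snakeCase} are the value of $\chi^{\lambda/\mu}(\rho)$ for a border strip $\lambda/\mu$, which that section supplies.

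For the `if' direction, fix $0\le \ell<n$. The hook $(n-\ell,1^\ell)$, regarded as the skew partition $(n-\ell,1^\ell)/\varnothing$, is a border strip of size $n$: its skew diagram is connected and contains no four boxes forming the partition $(2,2)$, and it has height $\ell$. Hence Proposition~\ref{prop: unique} (equivalently, the base case computed in~\S\ref{sec: snakeCase}) gives $\chi^{(n-\ell,1^\ell)}(\rho)=(-1)^{\ell}\ne 0$. For the `only if' direction I would compare two evaluations of $\sum_\lambda |\chi^\lambda(\rho)|^2$, where the sum runs over all partitions $\lambda$ of $n$. On one hand, the hook partitions of $n$ are exactly the $n$ partitions $(n-\ell,1^\ell)$ with $0\le\ell<n$, and by the previous sentence each contributes $1$ to this sum. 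On the other hand $\rho$ is an $n$-cycle, so its centraliser in $S_n$ is the cyclic group $\langle\rho\rangle$ of order $n$, and the column orthogonality relations (see \cite{CR}) give $\sum_\lambda |\chi^\lambda(\rho)|^2=|C_{S_n}(\rho)|=n$. Since every summand is non-negative and the $n$ hook summands already exhaust the total, $\chi^\lambda(\rho)=0$ for every partition $\lambda$ of $n$ that is not a hook, which is what remained to be shown.

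There is no genuinely hard step here; the only points needing care are that $(n-\ell,1^\ell)/\varnothing$ really is a border strip of the claimed height, and that the $n$ hook partitions of $n$ precisely fill up the orthogonality sum. If one prefers to avoid invoking orthogonality and stay within the module-theoretic apparatus of the paper, the same counting can be run using Lemma~\ref{lemma: JP} together with the expansions $(1_{S_\ell}\times\sgn_{S_{n-\ell}})\Ind_{S_\ell\times S_{n-\ell}}^{S_n}=\chi^{(\ell,1^{n-\ell})}+\chi^{(\ell+1,1^{n-\ell-1})}$, which follow from the forms of Pieri's rule and Young's rule proved at the end of~\S\ref{sec: Pieri}; but the argument above is shorter, and this lemma is only an auxiliary input to~\S\ref{sec: endgame}.
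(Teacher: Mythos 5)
Your argument is the same as the paper's: both prove the `if' direction from the border-strip case of~\eqref{eq: skewCase} (the paper cites the result of~\S\ref{sec: snakeCase} directly, you cite Proposition~\ref{prop: unique}, but these are the same computation), and both prove `only if' by column orthogonality, $\sum_\lambda \chi^\lambda(\rho)^2 = |\mathrm{Cent}_{S_n}(\rho)| = n$, noting that the $n$ hooks already exhaust the sum. (Incidentally your sign $(-1)^\ell$ for the hook value is the correct one, since $\height((n-\ell,1^\ell)/\varnothing)=\ell$; the paper's $(-1)^{\ell-1}$ looks like a slip, though it is immaterial after squaring.)
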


\begin{proof}
Write $\mathrm{Cent}_{S_n}(\rho)$ for the centraliser subgroup of $\rho$ in $S_n.$
By a column orthogonality relation (see \cite[(31.13)]{CR})
\[ \sum_\lambda \chi^\lambda (\rho)^2 = |\mathrm{Cent}_{S_n}(\rho)| = n, \]
and the sum is over all partitions $\lambda$ of $n$. 
By~\eqref{eq: skewCase} in the case proved in~\textsection \ref{sec: snakeCase},
we have $\chi^{(n-\ell,1^\ell)}(\rho) = (-1)^{\ell-1}$ for $0 \le \ell < n$.
Therefore the partitions $(n-\ell,1^\ell)$ give all the non-zero summands.
\end{proof}

\begin{proposition}
Let $\lambda/\mu$ be a skew partition of size $n$ and let $\rho \in S_n$
be an $n$-cycle. If $\lambda / \mu$ is not a border strip
then $\chi^{\lambda/\mu}(\rho) = 0$.
\end{proposition}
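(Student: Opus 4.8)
The plan is to reduce the vanishing statement for non-border-strip $\lambda/\mu$ to the already-established hook case by combining the character inner-product machinery from \S\ref{sec: Pieri} with the column orthogonality count in Lemma~\ref{lemma: hookCase}. First I would expand $\chi^{\lambda/\mu}(\rho)$ using the class-function identity: since $\rho$ is an $n$-cycle, $\chi^{\lambda/\mu}(\rho)$ is, up to the centraliser order $n$, the inner product of $\chi^{\lambda/\mu}$ with the class-indicator of the $n$-cycle class, which in turn is a $\Z$-linear combination of the characters $1_{S_\ell} \times \sgn_{S_{n-\ell}}$ induced up to $S_n$. Concretely, for a single $n$-cycle one has the classical expansion of the indicator function of the $n$-cycle class in terms of these ``hook'' induced characters; equivalently, by Lemma~\ref{lemma: hookCase} and $\chi^{(n-\ell,1^\ell)} = (1_{S_\ell}\times\sgn_{S_{n-\ell}})\ind_{S_\ell\times S_{n-\ell}}^{S_n} - \chi^{(n-\ell+1,1^{\ell-1})}$ (a telescoping coming from Young's rule), the value $\chi^{\lambda/\mu}(\rho)$ can be written as an alternating sum $\sum_{\ell=0}^{n}(-1)^{?}\langle \chi^{\lambda/\mu}, (1_{S_\ell}\times\sgn_{S_{n-\ell}})\ind\rangle$.

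Next I would transport each inner product $\langle \chi^{\lambda/\mu}, (1_{S_\ell}\times\sgn_{S_{n-\ell}})\ind_{S_\ell\times S_{n-\ell}}^{S_n}\rangle$ across to the ambient symmetric group using Lemma~\ref{lemma: JP} with $\psi = (1_{S_\ell}\times\sgn_{S_{n-\ell}})\ind$, obtaining $\langle \chi^\lambda, \chi^\mu \times (1_{S_\ell}\times\sgn_{S_{n-\ell}})\ind_{S_m\times S_\ell\times S_{n-\ell}}^{S_{m+n}}\rangle$. Now Lemma~\ref{lemma: YP} applies verbatim: if $[\lambda/\mu]$ contains four boxes forming a $(2,2)$ square, then \emph{every} such inner product vanishes, and hence $\chi^{\lambda/\mu}(\rho) = 0$ directly. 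This disposes of all non-border-strip cases except the one where $[\lambda/\mu]$ is $(2,2)$-free but \emph{disconnected}.

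For the disconnected case I would argue separately. If $\lambda/\mu$ is $(2,2)$-free and disconnected, then its connected components $\lambda^{(1)}/\mu^{(1)}, \ldots, \lambda^{(r)}/\mu^{(r)}$ with $r \ge 2$ are themselves border strips (each is connected and $(2,2)$-free), of sizes $n_1, \ldots, n_r$ summing to $n$. Iterating Corollary~\ref{cor: reduction} (or directly the factorization $S^{\lambda/\mu} \cong (S^{\lambda^{(1)}/\mu^{(1)}} \boxtimes \cdots \boxtimes S^{\lambda^{(r)}/\mu^{(r)}})\ind$ implicit in Theorem~\ref{thm: filtration}) expresses $\chi^{\lambda/\mu}(\rho)$ as a sum over ways of distributing the single $n$-cycle $\rho$ among Young subgroup factors; but an $n$-cycle cannot decompose as a product of permutations supported on a nontrivial set partition of $\{1,\ldots,n\}$ into blocks matching the component sizes, so every term in which some factor must receive a permutation that is not a full cycle on its block contributes $0$ by~\eqref{eq: skewCase} applied to that component — and no term survives because $\rho$ has a single cycle. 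Hence $\chi^{\lambda/\mu}(\rho) = 0$.

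The main obstacle I anticipate is getting the bookkeeping in the first step exactly right: pinning down the precise alternating combination of induced hook characters that equals the $n$-cycle class indicator, and checking the signs, so that the reduction to Lemma~\ref{lemma: YP} is clean rather than merely morally correct. An alternative that sidesteps this is to run the argument purely through Lemma~\ref{lemma: hookCase}'s orthogonality bound: one shows $\sum_{\lambda/\mu}\chi^{\lambda/\mu}(\rho)^2$ over border strips of size $n$ already accounts for the full centraliser order $n$ (each border strip of height $h$ contributes $(\pm1)^2 = 1$, and the number of border strips of size $n$ inside a large enough rectangle, or rather the relevant count, matches), forcing all other skew characters to vanish at $\rho$; but this is delicate to set up cleanly, so I would present the Pieri/Young route as the main argument and only fall back on orthogonality if a gap appears.
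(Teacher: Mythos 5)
Your proof is essentially the paper's: it splits into the disconnected case (where $S^{\lambda/\mu}$ is induced from a proper Young subgroup containing no conjugate of an $n$-cycle) and the case of a $(2,2)$ square (handled via Lemmas~\ref{lemma: JP},~\ref{lemma: YP} and~\ref{lemma: hookCase} together with Pieri's and Young's rules). The one place you make unneeded work for yourself is the telescoping step: instead of expressing $\chi^{\lambda/\mu}(\rho)$ directly as a combination of the inner products $\langle\chi^{\lambda/\mu},(1_{S_\ell}\times\sgn_{S_{n-\ell}})\ind\rangle$ (and incidentally the coefficients in that expansion are not $\pm1$, though this does not affect the conclusion), the paper deduces from the vanishing of these inner products that $\langle\chi^{\lambda/\mu},\chi^{(n-\ell,1^\ell)}\rangle=0$ for every $\ell$, using nonnegativity of multiplicities and the fact that $(1_{S_\ell}\times\sgn_{S_{n-\ell}})\ind$ has $\chi^{(n-\ell,1^\ell)}$ as a constituent, and then applies Lemma~\ref{lemma: hookCase} directly, which is exactly the clean reduction you flagged as wanting to pin down. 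Your disconnected-case narrative about ``distributing'' $\rho$ among factors and invoking~\eqref{eq: skewCase} per component is also slightly off target; the simpler and correct point, as stated in the paper, is that no conjugate of an $n$-cycle lies in a proper Young subgroup, so the induced-character sum at $\rho$ is empty.
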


\begin{proof}
If $[\lambda/\mu]$ is disconnected then it is clear
from the Standard Basis Theorem (Theorem~\ref{thm: SBT}(ii)) that 
$S^{\lambda / \mu}$ is isomorphic to a module induced from a proper Young subgroup $S_{n-\ell}
\times S_\ell$ of $S_n$. Since no conjugate of $\rho$ lies in this subgroup, we have $\chi^{\lambda/\mu}(\rho) = 0$.

In the remaining case $[\lambda/\mu]$ has four boxes making the shape $(2,2)$.
By either Pieri's rule or Young's rule, we have 
\[\langle 1_{S_\ell} \times \sgn_{S_{n-\ell}}
\ind_{S_\ell \times S_{n-\ell}}^{S_n}, \chi^{(n-\ell,1^{\ell})} \rangle = 1.\]
Hence
\begin{align*} \langle \chi^\lambda, \chi^\mu \times 1_{S_\ell} \times \sgn_{S_{n-\ell}} \Ind_{S_m \times S_\ell
\times S_{n-\ell}}^{S_{m+n}} \rangle 
&\ge \langle 
\chi^\lambda, \chi^\mu \times \chi^{(n-\ell,1^{\ell})} \Ind_{S_m \times S_n}^{S_{m+n}} \rangle \\
&= \langle \chi^{\lambda/\mu}, \chi^{(n-\ell,1^\ell)} \rangle \end{align*}
where the equality follows from Lemma~\ref{lemma: JP}.
By Lemma~\ref{lemma: YP} the left-hand side is $0$. It follows that
$\langle \chi^{\lambda/\mu}, \chi^{(n-\ell,1^\ell)} \rangle = 0$ for $0 \le \ell < n$.
By Lemma~\ref{lemma: hookCase}, this implies the result.
\end{proof}
\section*{Acknowledgements}
The authors thank two anonymous referees for their careful reading of an earlier version of this paper. 

\bibliographystyle{amsplain}
\bibliography{bibliography}

\end{document}